\title[Duality of boundary value problems]{%
Duality of boundary value problems for\\ minimal and maximal surfaces
}
\author[S.~Akamine and H.~Fujino]{
S.~Akamine and H.~Fujino
}   
\address[Shintaro Akamine]{%
Graduate School of Mathematics, 
Nagoya University, Chikusa-ku, Nagoya 464-8602, Japan
}
\email{s-akamine@math.nagoya-u.ac.jp}
\address[Hiroki Fujino]{%
  Institute for Advanced Research, Graduate School of Mathematics, 
Nagoya University, Chikusa-ku, Nagoya 464-8602, Japan
}
\email{m12040w@math.nagoya-u.ac.jp}
\subjclass[2010]{%
 Primary 49Q05;   
 Secondary 53B30; 31A05; 31A20
}%
\keywords{%
    minimal surface, 
    maximal surface, 
    harmonic mapping,
    infinite boundary value problem, 
    lightlike boundary problem}%
\thanks{
The first author was partially supported by 
JSPS KAKENHI Grant Number 19K14527, 17H06466 and JSPS/FWF Bilateral Joint Project I3809-N32 ``Geometric Shape Generation'',
and the second author by JSPS KAKENHI Grant Number 19K21022.
}
\theoremstyle{plain}
 \newtheorem{theorem}{Theorem}[section]
 \newtheorem{proposition}[theorem]{Proposition}
 \newtheorem{fact}[theorem]{Fact}
 \newtheorem{lemma}[theorem]{Lemma}
 \newtheorem{corollary}[theorem]{Corollary}
\theoremstyle{definition}
 \newtheorem{definition}[theorem]{Definition}
\theoremstyle{remark}
 \newtheorem{remark}[theorem]{Remark}
 \newtheorem*{remark*}{Remark}
 \newtheorem*{acknowledgement}{Acknowledgement}
\numberwithin{equation}{section}
\renewcommand{\phi}{\varphi}
\newcommand{\alim}{\angle \! \lim}
\definecolor{Blue}{rgb}{0,0,1}  
\definecolor{Red}{rgb}{1,0,0}  
\begin{document}
\maketitle

\begin{abstract}
 In 1966, Jenkins and Serrin gave existence and uniqueness results for infinite boundary value problems of minimal surfaces in the Euclidean space, and after that such solutions have been studied by using the univalent harmonic mapping theory. In this paper, we show that there exists a one-to-one correspondence between solutions of infinite boundary value problems for minimal surfaces and those of lightlike line boundary problems for maximal surfaces in the Lorentz-Minkowski spacetime. We also investigate some symmetry relations associated with the above correspondence together with their conjugations, and observe function theoretical aspects of the geometry of these surfaces. Finally, a reflection property along lightlike line segments on boundaries of maximal surfaces is discussed.
\end{abstract}

\section{Introduction} \label{sec:1} 
Let $\varphi$ be a solution of the minimal surface equation in the Euclidean space $\mathbb{E}^3$
over a simply connected domain $\Omega$ in the plane $\mathbb{R}^2 \simeq \mathbb{C}$,
that is, $\varphi$ is a real-valued function on $\Omega$ whose graph,  denoted by ${\rm graph}(\varphi)$, is a minimal surface.
Similarly, let $\psi$ be a solution of the maximal surface equation over $\Omega$
in the Lorentz-Minkowski space $\mathbb{L}^3$ with signature $(+,+,-)$.
We assume that $\psi$ is the {\it dual} of $\varphi$ in the sense explained later (see Section \ref{sec:2.1}).
Then, we can show that there always exists an orientation-preserving univalent harmonic mapping
$f$ from the unit disk $\mathbb{D}$ onto $\Omega$ such that
$X_{\min}=(f,\varphi \circ f)$ and $X_{\max}=(f, \psi\circ f)$
give global isothermal parameterizations for $\Sigma_{\min}={\rm graph}(\varphi)$ 
and $\Sigma_{\max}={\rm graph}(\psi)$, respectively.
This correspondence gives a quite useful tool to study infinite boundary value problems for minimal graphs and lightlike line boundary value problems
for maximal graphs. These problems have been developed independently; the former was discussed by Jenkins and Serrin in \cite{JS}, the latter was discussed by Bartnik and Simon in \cite{BS} as a special case of more general Dirichlet boundary value problems, respectively.
However, in this paper, we prove the following theorem which enables us to study these boundary value problems simultaneously. 

\begin{theorem}\label{thm:1}
	Let $\Omega$ be a bounded simply connected Jordan domain
	whose boundary contains a line segment $I$. 
	We let $\varphi$ be a solution of the minimal surface equation over $\Omega$, 
	$\psi$ its dual solution of the maximal surface equation,
	and $f\colon \mathbb{D}\rightarrow \Omega$ the corresponding harmonic mapping.
	Then the following statements are equivalent.
	\begin{itemize}
			\item[(i)] $\phi$ tends to plus infinity on $I$, and
			\item[(ii)] $\psi$ tamely degenerates to a future-directed lightlike line segment on $I$.
	\end{itemize} 
	Moreover, in this case, the following condition holds.
	\begin{itemize}
			\item[(iii)] There exists a discontinuous point $w_0\in \partial{\mathbb{D}}$ 
						of the boundary function $\hat{f}\colon \partial{\mathbb{D}}\to \partial{\Omega}$ of 
						$f$ such that $I$ lies on the cluster point set $C(f,w_0)$ of $f$ at $w_0$.
	\end{itemize}
\end{theorem}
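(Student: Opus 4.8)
The plan is to reduce Theorem~\ref{thm:1} to a statement about the boundary behaviour of one holomorphic function $F$ on $\mathbb{D}$ that packages the minimal and the maximal height functions together. Since $X_{\min}=(f,\varphi\circ f)$ and $X_{\max}=(f,\psi\circ f)$ are both isothermal for the \emph{same} conformal parameter $z\in\mathbb{D}$, comparing the two conformality relations (both taken with respect to $z$) gives $(\psi\circ f)_z^2+(\varphi\circ f)_z^2=0$; since $(\varphi\circ f)_z$ and $(\psi\circ f)_z$ are holomorphic (being $\partial_z$ of harmonic functions) and $\mathbb{D}$ is connected, the factorization $\bigl((\psi\circ f)_z-i(\varphi\circ f)_z\bigr)\bigl((\psi\circ f)_z+i(\varphi\circ f)_z\bigr)=0$ forces $(\psi\circ f)_z=\pm i(\varphi\circ f)_z$ with one fixed sign on $\mathbb{D}$. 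Hence there is a holomorphic $F$ on $\mathbb{D}$ with $\operatorname{Re}F=\varphi\circ f$ and $\operatorname{Im}F=\pm\psi\circ f$, the sign being the one fixed by the orientation and time-orientation conventions of Section~\ref{sec:2.1}; moreover $F'$ is, up to a nonzero constant, the Weierstrass datum $\sqrt{h'g'}$ of $f=h+\bar g$. Thus the minimal and the maximal graph over $\Omega$ are two ``shadows'' of the single pair $(f,F)$, and the segment $I\subset\partial\Omega$ is reached from inside only through the set $E\subset\partial\mathbb{D}$ at which the radial limit of $f$ lies on $I$.

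Then I would prove (i)$\Leftrightarrow$(ii). On the minimal side, $\varphi\to+\infty$ on $I$ forces $|\nabla\varphi|\to\infty$ as one approaches $I$, by the gradient estimates for the minimal surface equation. On the maximal side, the pointwise gradient duality recalled in Section~\ref{sec:2.1}, namely $\nabla\psi=(1+|\nabla\varphi|^2)^{-1/2}(-\varphi_y,\varphi_x)$, gives $|\nabla\psi|=|\nabla\varphi|(1+|\nabla\varphi|^2)^{-1/2}$, so that $|\nabla\varphi|\to\infty$ along $I$ is \emph{equivalent} to $|\nabla\psi|\to1$ along $I$, i.e.\ to the maximal graph becoming null over $I$. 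Unwinding the definition of a tame degeneration from Section~\ref{sec:2.1}, this is exactly the statement that $\psi$ stays bounded near $I$ while $|\nabla\psi|\to1$ in the controlled way for which the boundary trace of $X_{\max}$ over $I$ is a straight, necessarily lightlike, segment; and the sign in $\operatorname{Im}F=\pm\psi\circ f$ is precisely the one making this segment future-directed exactly when the blow-up of $\varphi$ is towards $+\infty$. Every link in this chain is reversible, which gives the equivalence (this is already visible in Scherk's surface $\varphi=\log(\cos x/\cos y)$ over the square and its dual).

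Finally, for (iii) I would argue as follows under the hypotheses (i)--(ii). Using the structure of the boundary correspondence of $f$ from Section~\ref{sec:2.1} --- the radial-limit function $\hat f\colon\partial\mathbb{D}\to\partial\Omega$ is monotone and surjective, continuous off an at most countable set, and at each discontinuity point $w$ the cluster set $C(f,w)$ is the segment of $\partial\Omega$ spanned by the jump --- the set $E=\hat f^{-1}(I)$ is (up to the jump structure) an arc. Suppose it were non-degenerate. For $w$ interior to $E$ and $z$ near $w$ one has $f(z)$ close to $I$, because $C(f,w)\subseteq I$ there; since $\varphi$ is bounded below near $I$ (by (i)), $\operatorname{Re}F=\varphi\circ f$ is then a harmonic function bounded below on a one-sided neighbourhood of $w$ in $\mathbb{D}$ whose radial limit is $+\infty$ on a positive-measure subarc of $E$ --- impossible, since a harmonic function bounded below on such a region has finite nontangential limits almost everywhere on the adjacent boundary arc (Fatou). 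Hence $E$ collapses to a single point $w_0$, at which $\hat f$ jumps over the whole of $I$; then $w_0$ is a discontinuity of $\hat f$ with $I\subseteq C(f,w_0)$, which is (iii).

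The step I expect to be the main obstacle is the middle one, and within it the translation of the metric assertion in (ii) --- ``$\psi$ \emph{tamely} degenerates to a \emph{future-directed} lightlike segment on $I$'' --- into the clean analytic assertion ``$\operatorname{Re}F\to+\infty$ on $E$ while $\operatorname{Im}F$ stays bounded near $E$'' with the correct time-orientation. This is where the geometry of Section~\ref{sec:2.1} is genuinely used (the boundedness of $\psi$ near $I$, the precise meaning of tameness, and the sign conventions), and it is what makes (i) and (ii) a true equivalence rather than a mere analogy; the implication that $\varphi\to+\infty$ on $I$ forces $|\nabla\varphi|\to\infty$ near $I$ is the other point where one steps outside pure function theory.
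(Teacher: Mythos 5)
The essential gap is in your middle step, exactly where you anticipated trouble. You treat ``tamely degenerates'' as the qualitative statement that $|\nabla\psi|\to1$ with a lightlike boundary trace, but Definition \ref{def:lightlike_bdr} is quantitative and directional: $\partial\psi/\partial\tau(z)=1+O(\mathrm{dist}(z,J)^2)$ for every closed $J\subset I$. Your chain $\varphi\to+\infty\Rightarrow|\nabla\varphi|\to\infty\Leftrightarrow|\nabla\psi|\to1$ yields neither the quadratic rate nor the directional information: one needs that $\nabla\varphi$ becomes asymptotically normal to $I$, with the sign dictated by divergence to $+\infty$, at the rate $O(\mathrm{dist}^2)$ after normalization. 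The paper gets precisely this from the Jenkins--Serrin gradient estimate \cite{JS}*{Lemma 1}, which gives $|\phi_y|/\sqrt{1+|\nabla\phi|^2}\ge 1-4d^2/r^2$ with $d$ the distance to $I$ and $r$ the geodesic distance to the rest of the boundary of the graph (bounded below by a constant exactly because $\varphi\to+\infty$ over $I$), the sign of $\phi_y$ being pinned down by the $+\infty$ divergence; nothing in your sketch supplies such an estimate. Moreover ``every link in this chain is reversible'' is not true at the level at which you argue: the converse (ii)$\Rightarrow$(i) is not a formal reversal but an integration argument. The quadratic rate gives $1-|\nabla\psi|^2=O(\mathrm{dist}^2)$, hence $-\phi_y=\psi_x/\sqrt{1-|\nabla\psi|^2}\ge C_1/\mathrm{dist}-C_2$, and it is the non-integrability of $1/\mathrm{dist}$ that forces $\varphi\to+\infty$ on $I$; with a weaker (say first-order) degeneration the bound would be integrable and $\varphi$ could stay bounded, and mere gradient blow-up cannot even distinguish $+\infty$ from $-\infty$, nor future- from past-directed. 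So as written, (i)$\Leftrightarrow$(ii) --- the core of the theorem --- is asserted rather than proved.

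Your argument for (iii), by contrast, is a genuine and essentially workable alternative to the paper's: the paper shows the dilatation satisfies $\omega\to1$ along $I$ and applies the F.\ and M.\ Riesz uniqueness theorem to conclude that $C(f^{-1},I)$ has measure zero, whereas you apply a local Fatou theorem to the harmonic function $\varphi\circ f$, bounded below on a one-sided neighbourhood, whose radial limits would be $+\infty$ on a set of positive measure. To make this airtight you must (a) justify the lower bound for $\varphi\circ f$ near a density point of $\hat f^{-1}(I)$ via the unrestricted limits in Hengartner--Schober (Lemma \ref{lemma:HS}), so that $f$ maps a one-sided neighbourhood into a region where $\varphi$ is bounded below, staying away from the endpoints of $I$; and (b) pass from ``measure zero'' to ``a single discontinuity point $w_0$ with $I\subset C(f,w_0)$'', which the paper does using that $f$ is a homeomorphism (two distinct accessed points would force an entire subarc of $\partial\mathbb{D}$ into $C(f^{-1},I)$, contradicting measure zero); your appeal to a ``monotone'' boundary correspondence is this same fact and needs to be argued, not assumed. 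So the third part could be salvaged along your lines, but the quantitative equivalence (i)$\Leftrightarrow$(ii) remains unproved in your proposal.
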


\noindent
Here, $C(f,w_0)$ consists of the points $z$ so that $z=\lim_{w_n\to w_0} f(w_n)$ 
for some $w_n\in \mathbb{D}$, and the definition of (ii) is given in Definition \ref{def:lightlike_bdr}, which defines a degeneration of ${\rm graph}(\psi)$ to a lightlike line segment on the boundary with an asymptotic estimate.
It should be remarked that if $\varphi$ tends to plus or minus infinity on a boundary arc $C$ of $\Omega$,
then $C$ must be a line segment (see \cite[p.~102]{O}). The proof of Theorem \ref{thm:1} is given in Section \ref{subsec:proofThm1}, and we discuss when the third condition (iii) conversely implies (i) and (ii) in Section \ref{subsec:converseThm1}.

Several applications can be found from Theorem \ref{thm:1}.
For example, we can prove the following reflection principle. 
\begin{theorem} \label{thm:2}
	Under the notations in Theorem \ref{thm:1}, 
	suppose $\partial \Omega$ contains line segments $I_1,I_2$,
	which have a common endpoint $z_0$ with interior angle $\alpha$.
	We also assume that $\varphi$ tends to plus or minus infinity on $I_1$ and $I_2$,
	and that the signs on $I_1$ and $I_2$ differ if $\alpha=\pi$. Then the following 
	statements hold$:$
	
	\begin{itemize}  \setlength{\leftskip}{-3ex}
	\item[(a)]
		The isothermal parameterization $X_{\min}=(f,\varphi \circ f)$ of $\Sigma_{\min}={\rm graph}(\varphi)$
		extends to a generalized minimal surface $\widetilde{X_{\min}}$ beyond a vertical line
		segment $L$ over $z_0$.
		Further, the extended surface is exactly the $\pi$-rotation of
		the original surface $\Sigma_{\min}$ with respect to $L$.	
	
	\item[(b)]
		Similarly, the isothermal parameterization $X_{\max}=(f,\psi \circ f)$ of $\Sigma_{\max}={\rm graph}(\psi)$
		extends to a generalized maximal surface $\widetilde{X_{\max}}$ beyond a shrinking singularity $(z_0, \psi(z_0))$,
		the intersection point of two lightlike line segments.
		Further, the extended surface is exactly the point symmetry to $(z_0, \psi(z_0))$ of
		the original surface $\Sigma_{\max}$.	
	\end{itemize}
\end{theorem}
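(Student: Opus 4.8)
The plan is to derive both parts from a single reflection performed at the level of the common harmonic mapping $f$. The key structural input (Section~\ref{sec:2.1}) is that the dual heights $\varphi\circ f$ and $\psi\circ f$ are conjugate harmonic functions on $\mathbb{D}$ --- up to normalisation, the real and imaginary parts $\Re F$ and $\Im F$ of one Abelian integral $F=\int dh_3$ of the shared Weierstrass data $(h,g)$, where $f=h+\bar g$.

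First I would apply Theorem~\ref{thm:1} to $I_1$ and to $I_2$ separately. Since $\varphi$ tends to $\pm\infty$ on $I_j$, we obtain a discontinuity point $w_j\in\partial\mathbb{D}$ of $\hat f$ with $I_j\subset C(f,w_j)$, and $\psi$ tamely degenerates on $I_j$ to a future-directed lightlike segment $\Lambda_j$; as $\psi$ is continuous at $z_0$ with value $\psi(z_0)$, the two segments $\Lambda_1,\Lambda_2$ meet at $P_0:=(z_0,\psi(z_0))$. If $\alpha=\pi$ and the signs on $I_1,I_2$ differ, then $I_1\cup I_2$ is a single straight segment but carries genuinely different infinite data across $z_0$, and $w_1\ne w_2$; whereas $\alpha=\pi$ with equal signs would make $I_1\cup I_2$ a single segment over which $\Sigma_{\min}$ escapes to infinity, for which Theorem~\ref{thm:1} yields only one discontinuity point and there is nothing to reflect --- precisely the excluded case. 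Let $\gamma\subset\partial\mathbb{D}$ be the open arc with endpoints $w_1,w_2$ whose boundary values under $\hat f$ form the portion of $\partial\Omega$ joining $I_1$ to $I_2$ through $z_0$.

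The analytic heart is to identify the boundary values of the two parameterizations on $\gamma$. Near $z_0$ the boundary $\partial\Omega$ is just $I_1\cup I_2$, whose $f$-preimages are arcs terminating at $w_1$ and $w_2$; the boundary correspondence therefore has nowhere to send the intervening arc but the vertex, and I would conclude that $f$ extends continuously to $\mathbb{D}\cup\gamma$ with $f\equiv z_0$ on $\gamma$, invoking the boundary behaviour of bounded univalent harmonic maps at an arc of constant cluster set. From $f=h+\bar g\equiv z_0$ on $\gamma$, i.e.\ $h=z_0-\bar g$ there, the functions $h^{*}(w)=z_0-\overline{g(1/\bar w)}$ and $g^{*}(w)=\overline{z_0}-\overline{h(1/\bar w)}$ are the analytic continuations of $h,g$ across $\gamma$, hence so are $F=\int dh_3$ and both $X_{\min}=(f,\Re F)$ and $X_{\max}=(f,\Im F)$. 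To pin down $F|_\gamma$ I would use condition (ii) and the definition of tame degeneration (Definition~\ref{def:lightlike_bdr}): the same exhaustion argument, one dimension up, shows that between the lightlike boundary segments $\Lambda_1,\Lambda_2$ meeting at $P_0$ there is only $P_0$ itself, so the continuous extension of $X_{\max}$ to $\gamma$ is the constant point $P_0$; equivalently $\Im F\equiv\psi(z_0)$ on $\gamma$, whence $\Re F=\varphi\circ f$ sweeps an interval there and $X_{\min}(\gamma)$ fills a vertical segment $L$ over $z_0$ --- the full vertical line when the two infinite signs disagree.

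With these boundary values the conclusion follows from the classical Schwarz reflection principle in its two guises: a conformally parametrised minimal (resp.\ maximal) surface whose continuous boundary values on an arc lie on a straight line extends across that arc by the $\pi$-rotation about the line, and one whose boundary values on an arc collapse to a single point extends by the point symmetry about that point. Concretely, with $w\mapsto 1/\bar w$ the reflection across $\gamma$ and $F^{\#}(w)=\overline{F(1/\bar w)}+2i\psi(z_0)$ the common continuation of $F$, one gets $\widetilde{X_{\min}}(w)=\bigl(2z_0-f(1/\bar w),\ \varphi\circ f(1/\bar w)\bigr)$ and $\widetilde{X_{\max}}(w)=\bigl(2z_0-f(1/\bar w),\ 2\psi(z_0)-\psi\circ f(1/\bar w)\bigr)$, which are exactly the $\pi$-rotation of $\Sigma_{\min}$ about $L$ and the point symmetry of $\Sigma_{\max}$ about $P_0$; branch points produced by the continuation --- at a finite endpoint of $L$, or at $P_0$, where the collapse of $\gamma$ makes $P_0$ a shrinking singularity --- explain the words ``generalized'' and ``shrinking singularity'' in the statement. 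I expect the main obstacle to be the boundary analysis of the previous paragraph: establishing $f\equiv z_0$ and $X_{\max}\equiv P_0$ on $\gamma$ rigorously from the definition of tame degeneration together with Carath\'eodory / prime-end theory for univalent harmonic mappings at discontinuity points of $\hat f$; granting the continuous boundary values on $\gamma$, the reflection step itself is routine.
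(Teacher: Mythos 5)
Your overall route is the paper's own: from Theorem \ref{thm:1} get discontinuity points $w_1,w_2$ of $\hat f$ with $I_j\subset C(f,w_j)$, show that $\hat f\equiv z_0$ and $\operatorname{Im}F\equiv\psi(z_0)$ on the arc of $\partial\mathbb D$ between them, and then Schwarz--reflect $f$ and $F$ across that arc. Performing the reflection directly via $w\mapsto 1/\bar w$ instead of first transplanting the arc to $(-1,1)$ by a conformal map $\Psi$, as the paper does, is immaterial, and your formulas $\widetilde{X_{\min}}(w)=\bigl(2z_0-f(1/\bar w),\ \varphi\circ f(1/\bar w)\bigr)$ and $\widetilde{X_{\max}}(w)=\bigl(2z_0-f(1/\bar w),\ 2\psi(z_0)-\psi\circ f(1/\bar w)\bigr)$ are exactly the symmetries $\sigma,\tau$ in the paper's proof (there normalized by $z_0=0$, $\psi(z_0)=0$). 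The continuity of $\psi$ up to $z_0$, needed for $\operatorname{Im}F\to\psi(z_0)$, follows simply from $|\nabla\psi|<1$; no ``exhaustion argument one dimension up'' is required.

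The genuine gap is the step you dispose of in one clause: that $w_1\neq w_2$ when $\alpha=\pi$ and the signs of $\varphi$ on $I_1,I_2$ differ. This is precisely where the sign hypothesis enters, and it is not automatic; the paper isolates it as Lemma \ref{prop:discontiPoints}, whose proof assumes $w_1=w_2$, uses that $t^{\ast}=\psi\circ f$ is a bounded harmonic function written as the Poisson integral of its boundary values so that its cluster set at $w_1$ is the closed interval between the values of $\psi$ at the far endpoints of $I_1$ and $I_2$, hence contains $\psi(z_0)$; while tame degeneration to lightlike segments of opposite causal type (Theorem \ref{thm:1}) makes $\psi(z_0)$ a strict extremum of $\psi$ on $\overline{I_1}\cup\overline{I_2}$, hence strictly outside that interval --- a contradiction. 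Your phrase ``carries genuinely different infinite data across $z_0$'' does not substitute for this causality argument on the dual; without it, in the $\alpha=\pi$ case the intervening arc $\gamma$ could a priori be empty and there would be nothing to reflect across. (For $\alpha\neq\pi$ the inequality $w_1\neq w_2$ is cheap, since each $C(f,w_j)$ is a line segment by Lemma \ref{lemma:HS}.) The other obstacle, which you flag yourself --- that $\hat f\equiv z_0$ on $\gamma$ once $w_1\neq w_2$ --- is handled in the paper not by prime-end theory but by the short Jordan-curve argument in the proof of Corollary \ref{cor:polygonal bdr} (two paths tending to $w_1,w_2$ whose $f$-images tend to $z_0$), so that part of your plan is fillable with the tools you name; the missing proof that the sign condition forces $w_1\neq w_2$ is the one point that needs real work.
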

\noindent
More detailed discussions relating the interior angle $\alpha$ with the boundary behavior of $\phi$ are given in Section \ref{subsec:reflection}.

The next applications concern the conjugate surfaces. It is known that the conjugate 
minimal (resp. maximal) surface is defined for each minimal (resp. maximal) surface,
by replacing each component of the isothermal parametrization with its conjugate harmonic function.
Thus, if we have a minimal graph 
and its isothermal parametrization $X_{\min}$ for instance, 
then we canonically obtain three surfaces; the conjugate minimal surface $X_{\min}^{\ast}$,
the dual maximal surface $X_{\max}$, and the conjugate maximal surface $X_{\max}^{\ast}$ of the dual. 
It should be pointed that the conjugation and the dual operation commute, and are also defined for generalized surfaces. 

Under these two operations, we can find striking relationships between $X_{\min}$, $X_{\max}$, $X_{\min}^{\ast}$ and $X_{\max}^{\ast}$. One is the following symmetry concerning the reflection symmetries, see Figure \ref{zu19}.

\begin{corollary} \label{thm:4}
		Under the assumptions in Theorem \ref{thm:2}, 
		\begin{itemize} \setlength{\leftskip}{-3ex}
			\item $X_{\min}$ admits a vertical segment $L$ over $z_0$, and has
						the line symmetry there.
		\end{itemize}
		Further, the following statements hold at the corresponding part to $L$.
		\begin{itemize} \setlength{\leftskip}{-3ex}
			\item $X_{\max}$ admits a shrinking singularity, and has the point symmetry there.
			\item $X_{\min}^{\ast}$ admits a horizontal geodesic curvature line,
						and has the planar symmetry there.
			\item $X_{\max}^{\ast}$ admits a null curve as a folding singularity, and has the folded symmetry there.
		\end{itemize}
		Here, each of these four surfaces is regarded as the extended surface.
\end{corollary}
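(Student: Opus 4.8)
The plan is to reduce the whole statement to bookkeeping of the parities of the three coordinate functions of each of the four surfaces across the boundary arc that gets doubled in the extension. First I would record the symmetry supplied by Theorem~\ref{thm:2}(a): after an isometry of $\mathbb{E}^3$ placing the vertical segment $L$ on the $x_3$-axis, and after choosing conformal coordinates $(u,v)$ on the doubled parameter domain so that the natural involution is $\iota(u,v)=(u,-v)$ with $\{v=0\}$ mapped onto $L$, the assertion ``$\widetilde{X_{\min}}$ is the $\pi$-rotation of $X_{\min}$ about $L$'' reads $\widetilde{X_{\min}}\circ\iota=R_L\circ\widetilde{X_{\min}}$, where $R_L(x_1,x_2,x_3)=(-x_1,-x_2,x_3)$. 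In coordinates this says exactly that the first two coordinate functions of $\widetilde{X_{\min}}$ are odd in $v$ and the third is even in $v$; this is already the first bullet of the Corollary.

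Next I would transport this parity pattern through duality and conjugation. Two structural facts are needed, both from Section~\ref{sec:2.1} (and visible by comparing the conformality relations $(\partial f_1)^2+(\partial f_2)^2+(\partial(\varphi\circ f))^2=0$ in $\mathbb{E}^3$ and $(\partial f_1)^2+(\partial f_2)^2-(\partial(\psi\circ f))^2=0$ in $\mathbb{L}^3$): the parametrizations $X_{\min}=(f_1,f_2,\varphi\circ f)$ and $X_{\max}=(f_1,f_2,\psi\circ f)$ share the same planar harmonic map $(f_1,f_2)$, while $\psi\circ f$ equals, up to sign and an additive constant, a harmonic conjugate of $\varphi\circ f$; and conjugation replaces each coordinate function by its harmonic conjugate. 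I would then prove the elementary lemma that a function harmonic on the doubled domain which is even (resp. odd) in $v$ has a harmonic conjugate which, after subtracting a suitable constant, is odd (resp. even) in $v$, and that a function odd in $v$ vanishes on $\{v=0\}$. Applying this and absorbing the constants into translations of each surface, the parities propagate to: $\widetilde{X_{\max}}$ has all three coordinates odd in $v$; $\widetilde{X_{\min}}^{\ast}$ has the first two even and the third odd; and $\widetilde{X_{\max}}^{\ast}$, which is both the dual of $\widetilde{X_{\min}}^{\ast}$ and the conjugate of $\widetilde{X_{\max}}$, has all three even in $v$. The commutativity of conjugation and duality, together with their compatibility with the extensions of Theorem~\ref{thm:2}, guarantees that these are the four extended surfaces named in the statement.

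It then remains to read off the geometry. Parity (odd, odd, odd) for $\widetilde{X_{\max}}$ forces all three coordinates to vanish on $\{v=0\}$ after centering, so the whole parameter curve $\{v=0\}$ collapses to the single point $(z_0,\psi(z_0))$ and $\widetilde{X_{\max}}\circ\iota=-\widetilde{X_{\max}}$: this is the shrinking singularity with point symmetry, i.e. the $\widetilde{X_{\max}}$ bullet, recovering Theorem~\ref{thm:2}(b). Parity (even, even, odd) for $\widetilde{X_{\min}}^{\ast}$ means $\widetilde{X_{\min}}^{\ast}\circ\iota$ is the composition of $\widetilde{X_{\min}}^{\ast}$ with the reflection $(x_1,x_2,x_3)\mapsto(x_1,x_2,-x_3)$, so $\widetilde{X_{\min}}^{\ast}$ is symmetric in the horizontal plane $\{x_3=0\}$; the curve $\widetilde{X_{\min}}^{\ast}(\cdot,0)$ lies in that plane, the surface meets it orthogonally there because the $v$-derivatives of $f_1^{\ast},f_2^{\ast}$ vanish on $\{v=0\}$, and the standard fixed-point-set argument for a reflection isometry shows this curve is simultaneously a geodesic and a line of curvature — the horizontal geodesic curvature line with planar symmetry. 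Parity (even, even, even) for $\widetilde{X_{\max}}^{\ast}$ gives $\widetilde{X_{\max}}^{\ast}\circ\iota=\widetilde{X_{\max}}^{\ast}$, the folded symmetry; the full $v$-differential of $\widetilde{X_{\max}}^{\ast}$ vanishes on $\{v=0\}$, so this parameter curve is a singular set along which the surface folds, and since its $v$-derivative is null, conformality in $\mathbb{L}^3$ forces the $u$-derivative to be null as well, so the crease is a null curve, as claimed.

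The main obstacle I anticipate is the careful construction of the doubled parameter domains and the verification that a single involution $\iota$ simultaneously governs all four extensions, together with tracking the additive constants in the harmonic-conjugate lemma closely enough to distinguish a collapse to a point from a collapse to a line (hence a shrinking singularity from a straight segment) and a genuine fold from an ordinary planar curve. Once the parities and the involution are in place, the concluding identifications (geodesic, line of curvature, null curve, fold singularity) are routine.
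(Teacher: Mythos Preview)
Your proposal is correct and follows essentially the same approach as the paper: both arguments start from the reflection symmetry of Theorem~\ref{thm:2}, read it as a parity pattern (odd, odd, even) for the coordinate functions of $X_{\min}$ under the involution $w\mapsto\overline{w}$ (your $\iota$), and then propagate parities through conjugation and duality using the rule that harmonic conjugation swaps even and odd (up to an additive constant). The paper carries this out in complex notation, obtaining $f^{\ast}(\overline{w})=f^{\ast}(w)$ via the reflection principle and then writing down $X_{\min}^{\ast}(\overline{w})=\rho\circ X_{\min}^{\ast}(w)$ and $X_{\max}^{\ast}(\overline{w})=X_{\max}^{\ast}(w)$ exactly as you do.

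The one substantive difference is in how the geometric labels are justified: for ``horizontal geodesic curvature line'' and ``folding singularity along a null curve'' the paper simply cites the known correspondences (straight line $\leftrightarrow$ planar geodesic curvature line under conjugation, from \cite{DHS}; shrinking singularity $\leftrightarrow$ fold singularity under conjugation, from \cite{KY}), whereas you derive these directly from your parity table plus conformality (the vanishing of $\partial_v X_{\max}^{\ast}$ on $\{v=0\}$ forces $\langle\partial_u X_{\max}^{\ast},\partial_u X_{\max}^{\ast}\rangle=0$, etc.). Your route is more self-contained; the paper's is shorter. Either way the content is the same.
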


The next one is the corresponding result to Corollary \ref{thm:4} at infinity.

\begin{corollary} \label{thm:3}
		Assume that $\partial \Omega$ contains three line segments $I_1,I_2, I_3$, and that $I_j$ and $I_{j+1}$ have a common endpoint $z_j$ with interior angle $\alpha_j\neq \pi$ for $j=1,2$. Further, we suppose that $\phi$ tends to plus infinity on $I_2$, and tends to plus or minus infinity on $I_1$ and $I_3$. Then the following statements hold$:$
	\begin{itemize}   \setlength{\leftskip}{-3ex}
		\item $X_{\min}$ diverges to $+\infty$ over the horizontal segment $I_2=(z_1,z_2)$.
		\item $X_{\max}$ admits a future-directed lightlike line segment over $I_2$.
		\item $X_{\min}^{\ast}$ diverges to a vertical line segment of length $|I_2|=|z_2-z_1|$ at infinity in $(z_1-z_2)$-direction. 
		\item $X_{\max}^{\ast}$ diverges to $-\infty$ at infinity in $(z_1-z_2)$-direction. 
	\end{itemize}
Further, these horizontal segment at infinity, lightlike line segment, vertical segment at infinity and infinite  point at infinity correspond to each other under the conjugation and the dual operation.
\end{corollary}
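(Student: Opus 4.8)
The strategy is to reduce everything to Theorem \ref{thm:1} applied on each of the three segments $I_1, I_2, I_3$, and then to track the four objects through the two commuting operations (conjugation and dual). First I would invoke Theorem \ref{thm:1} with the segment $I = I_2$: since $\varphi \to +\infty$ on $I_2$, the equivalence (i)$\Leftrightarrow$(ii) immediately gives that $\psi$ tamely degenerates to a future-directed lightlike line segment on $I_2$, which is precisely the claim for $X_{\max}$. This also yields, via condition (iii), a single discontinuity point $w_0 \in \partial\mathbb{D}$ of $\hat f$ with $I_2 \subset C(f,w_0)$; I would use this to pin down the geometry of the parametrizations near the arc of $\partial\mathbb{D}$ collapsing onto $I_2$. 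The role of the hypotheses $\alpha_1, \alpha_2 \neq \pi$ at the endpoints $z_1, z_2$ (together with $\varphi \to \pm\infty$ on the adjacent $I_1, I_3$) is to guarantee, as in the analysis behind Theorem \ref{thm:2}, that this collapsing arc is nondegenerate and that the cluster set is exactly $I_2$ and nothing more — i.e. the divergence of $X_{\min}$ happens cleanly over the open segment $(z_1,z_2)$ and not leaking past the corners.

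Next I would pass to the conjugate surface $X_{\min}^{\ast}$. The isothermal parametrization $X_{\min} = (f, \varphi\circ f)$ has components that are real parts of holomorphic functions; replacing each by its harmonic conjugate, the height coordinate of $X_{\min}^{\ast}$ is the conjugate of $\varphi\circ f$. Over $I_2$, where $X_{\min}$ diverges to $+\infty$ in a controlled (tame) way, the standard conjugation dictionary for minimal surfaces turns the vertical asymptotic ray into a vertical line segment at infinity: a direction of divergence becomes a translation, and the $+\infty$ limit along the normal-to-$I_2$ direction becomes, after conjugation, horizontal divergence in the $(z_1 - z_2)$-direction while the vertical coordinate sweeps an interval. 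The length of this interval must equal $|I_2| = |z_2 - z_1|$ because conjugation is an isometry on the parameter domain and the flux / period of the relevant holomorphic one-form over the collapsing arc is exactly the Euclidean length of $I_2$; this is the quantitative heart of the third bullet and I would make it precise by computing the period integral $\int \partial(f)$ over the boundary arc mapping to $I_2$.

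Finally, for $X_{\max}^{\ast}$ I would use that conjugation and the dual operation commute (stated in the excerpt): $X_{\max}^{\ast} = (X_{\min}^{\ast})^{\text{dual}} = (X_{\max})^{\ast}$, so I can compute it either from $X_{\max}$ by conjugation or from $X_{\min}^{\ast}$ by the dual. The dual operation exchanges the behavior of the spacelike and timelike parts in a way already used to prove (i)$\Leftrightarrow$(ii): a future-directed lightlike segment for $X_{\max}$ over $I_2$ dualizes to a genuine point at infinity for $X_{\max}^{\ast}$, reached by divergence to $-\infty$ in the $(z_1-z_2)$-direction — the sign flip from $+\infty$ (for $X_{\min}$) to $-\infty$ (for $X_{\max}^{\ast}$) is exactly the Wick-type rotation built into the dual. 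The last sentence of the corollary — that the four boundary objects correspond under the two operations — then follows formally, since all four parametrizations share the same parameter domain $\mathbb{D}$, the same collapsing boundary arc at $w_0$, and are obtained from one another by the conjugation/dual maps, so the boundary data are images of each other by construction.

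\textbf{Main obstacle.} The genuinely delicate step is not the formal bookkeeping of conjugate/dual pairs but the \emph{tameness} and \emph{localization} at the corners $z_1, z_2$: I must ensure that the $\alpha_j \neq \pi$ condition, plus $\varphi \to \pm\infty$ on $I_1, I_3$, forces the cluster set over the $I_2$-arc to be precisely $I_2$ with a uniform asymptotic rate, so that conjugation produces a segment of the exact length $|I_2|$ and not something shorter (which could happen if the arc were ``pinched'' at an endpoint) or something ill-defined. This requires the boundary-behavior analysis near corners from Section \ref{subsec:reflection}, in particular controlling the harmonic mapping $f$ near the preimages of $z_1$ and $z_2$; everything else is an application of Theorem \ref{thm:1} and the algebra of the two commuting operations.
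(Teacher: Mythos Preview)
Your high-level strategy matches the paper's: use Theorem \ref{thm:1} on $I_2$ to get the $X_{\max}$ statement and a discontinuity $w_2$ of $\hat f$, use the corner hypotheses to control the boundary behavior near $z_1,z_2$, and then read off the behavior of the conjugates. The paper, however, carries this out by a concrete Poisson-integral computation rather than by an abstract ``conjugation dictionary''. It first uses Lemma \ref{prop:discontiPoints} (this is where $\alpha_j\neq\pi$ enters) to show that the three discontinuity points $w_1,w_2,w_3$ are distinct and that $\hat f\equiv z_j$ on the arc $J_j$ between $w_j$ and $w_{j+1}$; hence near $w_2$ the boundary function $\hat f$ is literally a two-valued step function. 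Writing $f$ as the Poisson integral of $\hat f$ and taking conjugates termwise then gives explicit asymptotics \eqref{eq:bdryf}--\eqref{eq:bdryt} for $f,f^{\ast},t^{\ast},t$ near $w_2$, from which all four bullets follow at once.

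The one point where your justification diverges from the paper and is not quite right is the source of the length $|I_2|$. It does \emph{not} come from ``conjugation is an isometry on the parameter domain'' or a flux of $\partial f$; it comes from the lightlike degeneration itself. Since $\psi$ tamely degenerates to a future-directed lightlike segment on $I_2$, one has $\psi(z_2)-\psi(z_1)=|z_2-z_1|$, and because $\hat f\equiv z_j$ on $J_j$, the bounded harmonic function $t^{\ast}=\psi\circ f$ has boundary values $\psi(z_1),\psi(z_2)$ on $J_1,J_2$. The Poisson-integral asymptotic \eqref{eq:bdrytast} then shows that the cluster set of $t^{\ast}$ at $w_2$ is an interval of length exactly $|z_2-z_1|$, and the conjugate \eqref{eq:bdryfast} shows $f^{\ast}$ diverges in the $(z_1-z_2)$-direction. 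Your ``main obstacle'' (that the cluster set be exactly $I_2$, not pinched) is handled precisely by Lemma \ref{prop:discontiPoints}, so you should invoke that rather than the full reflection analysis of Theorem \ref{thm:reflection}.
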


\begin{figure}[htbp]
       \begin{center} \hspace*{-5ex}
           \includegraphics[scale=0.68]{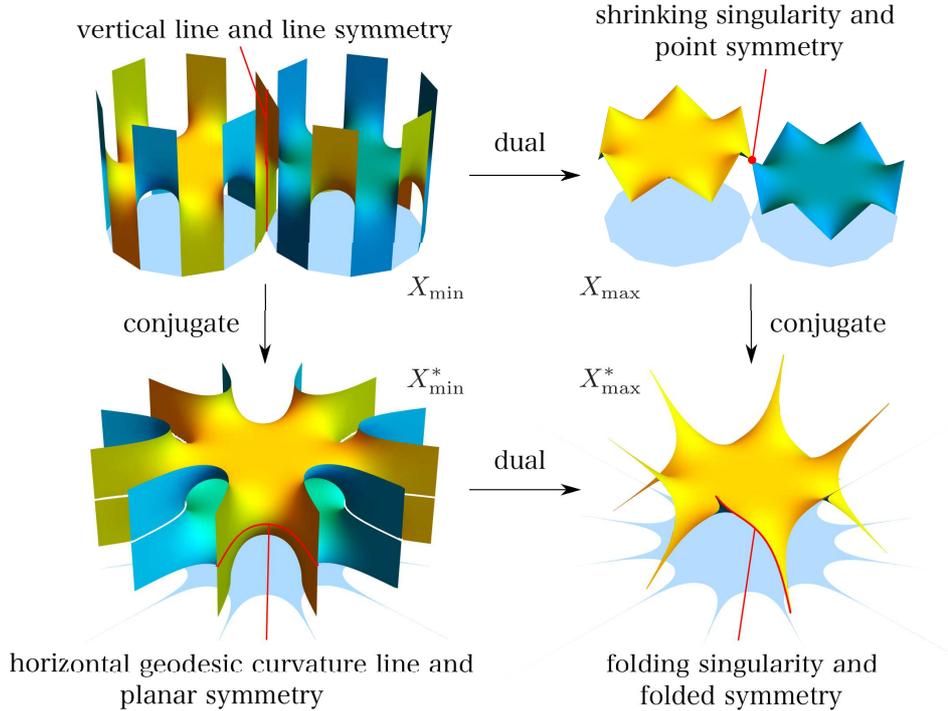} 
       \end{center}
       \caption{Symmetries of $X_{\min}$, $X_{\max}$, $X_{\min}^{\ast}$ and $X_{\max}^{\ast}$ under conjugations and dual operations.}   \label{zu19}
\end{figure}

\noindent
More precise statements of Corollaries \ref{thm:4} and  \ref{thm:3} are given in Section \ref{subsec:singularity} and in Section \ref{subsec:sym}, respectively. The relations between $X_{\min}$ and $X_{\min}^{\ast}$ in Corollaries \ref{thm:4} and  \ref{thm:3} are known as a key tool in the {\it conjugate surface construction} to construct some solutions of (free) boundary value problems (see \cite{Karcher}, \cite{Kar2}, \cite{Kar3}, \cite{Smyth}). Karcher \cite{Karcher} used this relation and constructed ($2k-3$)-families of complete embedded minimal surfaces with vertical translation period, called {\it saddle towers}, from the conjugates of the Jenkins-Serrin graphs in \cite{JS} over equilateral convex $2k$-gons which diverge to plus or minus infinity alternately on each edge.

Infinite boundary value problems for the minimal surface equation have been studied intensively,
and crucial existence and uniqueness results were given by Jenkins and Serrin in \cite{JS}.
Their results can be applied to surprisingly broad situations, however,
we briefly restrict ourselves to the case where the domain $\Omega$ is a
polygonal domain and the prescribed boundary value is plus or minus infinity on each edge.
In this case, it is known that the corresponding harmonic mapping can be written
as the Poisson integral of some step function, and
this fact leads us to more detailed analysis of the solution.
We refer the readers to the references \cite{BW}, \cite{D}, \cite{MS}, \cite{W}.
Further, related deep results on the univalent harmonic mappings can be found
in \cite{BH}, \cite{HS}, \cite{HS2}, for instance.

Meanwhile, boundary value problems for the maximal surface equation
was discussed by Bartnik and Simon in slightly different settings in \cite{BS}.
More precisely, they considered the variational problem of maximizing the
surface area functional among weakly spacelike graphs in general dimensions,
and gave a necessary and sufficient condition for the boundary value problems
to be solvable, together with the uniqueness result.
We remark that the maximal surface equation appears as the
Euler-Lagrange equation of this variational problem when the surface is spacelike.
On the other hand, in a special case of lightlike boundary value problems,
we can also obtain in Corollary \ref{cor:JS} the corresponding result to the Jenkins-Serrin result for maximal surfaces
as an immediate corollary of Theorem \ref{thm:1}.

Finally, we note that the dual correspondence between solutions of the minimal surface equation and the maximal surface equation,
which is one of the main tools in the present article, has appeared in various contexts
not only in the fields of mathematics but also physics.
Here, the duality was established by Calabi \cite{C} in the Lorentzian geometrical setting to study global behavior of maximal surfaces. Also, it played an important role in the arguments by Jenkins and Serrin in \cite{JS}. A fluid mechanical viewpoint of the duality was discussed in \cite{AUY} (cf.\cite{B}). 
Recently, this duality is generalized to surfaces with constant mean curvature in some Riemannian and Lorentzian homogeneous spaces by Lee \cite{L} and more general situation with prescribed mean curvature by Lee and Manzano \cite{LM}.


\section{Preliminaries} \label{sec:2} 

We denote the Euclidean 3-space by $\mathbb{E}^3$ and the Lorentz-Minkowski 3-space with signature $(+,+,-)$ by $\mathbb{L}^3$. Let $(x,y,t)$ be the canonical coordinate on $\mathbb{R}^3$. We sometimes identify $\mathbb{E}^3$ and $\mathbb{L}^3$ with $\mathbb{R}^3$ as real vector spaces, and also identify the  $xy$-plane with the complex plane $\mathbb{C}$, respectively.

\subsection{Duality between minimal surfaces and maximal surfaces}\label{sec:2.1} 
One of the key tools in the present article is the duality between minimal and maximal graphs. We first define the duality.

Let $\phi$ be a solution of the minimal surface equation
\begin{equation}\label{eq:minimal}
		\mathrm{div}\left(\frac{\nabla{\phi}}{\sqrt{1+|\nabla\phi|^2}}\right)=0, \nonumber
\end{equation}
over a simply connected domain $\Omega$ in the $xy$-plane.
Then we can define a function $\psi$ over $\Omega$ such that
\begin{equation}\label{eq:duality}
		d{\psi}=-\frac{\phi_{y}}{\sqrt{1+|\nabla{\phi}|^2}}dx+\frac{\phi_{x}}{\sqrt{1+|\nabla{\phi}|^2}}dy.
\end{equation}
It can be easily seen that $\psi$ is a solution of the maximal surface equation
\begin{equation}\label{eq:maximal}
		\mathrm{div}\left(\frac{\nabla{\psi}}{\sqrt{1-|\nabla\psi|^2}}\right)=0,\quad |\nabla{\psi}|<1. \nonumber
\end{equation}
The following duality among solutions of the minimal surface equation and the maximal surface equation is stated by Calabi in \cite{C}.
\begin{fact}\label{fact:duality}
On a simply connected domain $\Omega\subset \mathbb{R}^2$, up to an additive constant, there is a one-to-one correspondence between solutions $\phi$ of the minimal surface equation and $\psi$ of the maximal surface equation via the relation $(\ref{eq:duality})$.
\end{fact}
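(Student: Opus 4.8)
The plan is to recast each of the two surface equations as the statement that an explicit $1$-form on $\Omega$ is closed, and then to integrate it via the Poincar\'e lemma, which applies because $\Omega$ is simply connected. First I would recall that a solution $\phi$ of the minimal surface equation is, by interior elliptic regularity, real-analytic; in particular it is $C^2$, so the differentiations below are legitimate. Given such a $\phi$, put $W:=\sqrt{1+|\nabla\phi|^2}\ge 1$ and consider the $1$-form
\begin{equation*}
	\omega_\phi:=-\frac{\phi_y}{W}\,dx+\frac{\phi_x}{W}\,dy .
\end{equation*}
A direct computation gives $d\omega_\phi=\mathrm{div}\bigl(\nabla\phi/W\bigr)\,dx\wedge dy$, so $\omega_\phi$ is closed exactly when $\phi$ solves the minimal surface equation. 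Since $\Omega$ is simply connected, $\omega_\phi=d\psi$ for some function $\psi$, unique up to an additive constant; this is precisely the relation $(\ref{eq:duality})$. As $\phi$ enters only through $\nabla\phi$ and $\psi$ is determined only up to a constant, this is a well-defined map between solutions modulo additive constants.

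Next I would verify that this $\psi$ solves the maximal surface equation and, in particular, that the spacelike bound $|\nabla\psi|<1$ is automatic rather than an extra hypothesis. From $\psi_x=-\phi_y/W$ and $\psi_y=\phi_x/W$ one finds $|\nabla\psi|^2=|\nabla\phi|^2/W^2=1-W^{-2}<1$, hence $\sqrt{1-|\nabla\psi|^2}=W^{-1}$ and therefore $\nabla\psi/\sqrt{1-|\nabla\psi|^2}=W(\psi_x,\psi_y)=(-\phi_y,\phi_x)$. Its divergence is $-\phi_{yx}+\phi_{xy}=0$, so $\psi$ indeed solves the maximal surface equation. It is worth noting that this last step uses nothing about $\phi$ beyond $\phi\in C^2$: the minimal surface equation was consumed entirely in guaranteeing the \emph{existence} of $\psi$.

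For the inverse correspondence I would run the symmetric construction. For a solution $\psi$ of the maximal surface equation (again real-analytic, and with $|\nabla\psi|<1$ built in), set $\widetilde W:=\sqrt{1-|\nabla\psi|^2}\in(0,1]$ and consider $\eta_\psi:=(\psi_y/\widetilde W)\,dx-(\psi_x/\widetilde W)\,dy$; then $d\eta_\psi=-\mathrm{div}\bigl(\nabla\psi/\widetilde W\bigr)\,dx\wedge dy$, so $\eta_\psi$ is closed exactly when $\psi$ is maximal, and simple connectivity yields $\phi$ with $d\phi=\eta_\psi$, unique up to a constant. The same algebra as above shows $\sqrt{1+|\nabla\phi|^2}=\widetilde W^{-1}$ and $\nabla\phi/\sqrt{1+|\nabla\phi|^2}=(\psi_y,-\psi_x)$, which is divergence-free, so $\phi$ is a minimal solution; and feeding this $\phi$ back into $(\ref{eq:duality})$ returns the original $\psi$ up to a constant. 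Thus the two constructions are mutually inverse on classes modulo constants, which is exactly the asserted one-to-one correspondence.

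The only point that genuinely requires care — the main, if modest, obstacle — is getting the signs right so that the composite of the two constructions is the identity (and not, say, $\phi\mapsto-\phi$) and so that injectivity holds directly. Both reduce to one elementary fact: the map $p\mapsto p/\sqrt{1+|p|^2}$ is a diffeomorphism of $\mathbb{R}^2$ onto the open unit disk, with inverse $q\mapsto q/\sqrt{1-|q|^2}$, and the quarter-turn $(a,b)\mapsto(-b,a)$ is a bijection. Hence $d\psi$ determines $\nabla\phi$ uniquely, giving injectivity, and tracking the quarter-turn together with this radial map through the two constructions pins down all the signs and confirms that the composite is the identity modulo additive constants.
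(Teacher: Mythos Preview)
Your argument is correct: the closed-form/Poincar\'e-lemma computation you give is the standard proof of Calabi's duality, and all the sign and regularity checks are in order. The paper itself does not supply a proof of this statement---it is recorded as a Fact and attributed to Calabi~\cite{C}---so there is no in-paper argument to compare against; your write-up is exactly the kind of verification the paper omits.
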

\noindent Henceforth we shall call the above $\psi$ satisfying $(\ref{eq:duality})$ the {\it dual} of $\phi$.

\subsection{Minimal surfaces, maximal surfaces and harmonic mappings}

We next recall parametric and non-parametric representations of minimal and maximal surfaces and their relations to harmonic mappings. 

Let us consider the minimal graph in $\mathbb{E}^3$ of a solution $\phi$ of the minimal surface equation, denoted by ${\rm graph}(\phi)$, over a simply connected domain $\Omega\subset \mathbb{C}$. By the uniformization theorem, there exists a global isothermal coordinate $(\mathbb{D}; w=u+iv)$ and a parametrization $X_{\rm min}(w)=(x(w),y(w),t(w))$ on $\mathbb{D}$, so that $X_{\rm min}(\mathbb{D})={\rm graph}(\phi)$. Since each of the coordinate functions $x,y,t$ is harmonic, we obtain a univalent harmonic mapping $f=x+iy$, which gives a diffeomorphism from $\mathbb{D}$ onto $\Omega$ and $X_{\min}=(f,\phi\circ f)$ gives an isothermal parametrization of ${\rm graph}(\phi)$. Further, we can always assume that $f$ is orientation-preserving, by changing $w$ to $\overline{w}$ if necessary. Such $f$ is unique up to a pre-composition with a M\"obius transformation of $\mathbb{D}$, since $X_{\min}=(f,\phi\circ f)$ is a conformal mapping. If we use the canonical decomposition $f=h+\overline{g}$, where $h$ and $g$ are holomorphic functions in $\mathbb{D}$, then the conformality condition of $X_{\rm min}(w)=(x(w),y(w),t(w))$ implies
\[
		0=(\partial{x}/\partial{w})^2+(\partial{y}/\partial{w})^2+(\partial{t}/\partial{w})^2=h'g'+t_w^2. 
\]
Thus there exists a single-valued holomorphic branch of $\sqrt{h'g'}$ so that the third coordinate $t=\phi \circ f$ is written as $t(w)={\rm Re} F(w)$, where 
\begin{equation} \label{eq:FforPhi}
		F(w)=2i\int_0^w{\sqrt{h'(\zeta)g'(\zeta)}}d{\zeta}+\varphi\circ f(0).
\end{equation}
Therefore, ${\rm graph}(\phi)$ has the following parametric representation on $\mathbb{D}$.
\begin{equation}\label{eq:Xmin}
		X_{\rm min}=({\rm Re}(f), {\rm Im}(f), {\rm Re}(F))=(f, {\rm Re}(F)).
\end{equation}
In the last equality, we identify $\mathbb{E}^3$ with $\mathbb{C}\times \mathbb{R}$.

Similarly, for a solution $\psi$ of the maximal surface equation
over $\Omega$, its graph denoted by ${\rm graph}(\psi)$ has the representation
\begin{equation}\label{eq:Xmax}
		X_{\rm max}=({\rm Re}(f), {\rm Im}(f), {\rm Im}(F))= (f, {\rm Im}(F)),
\end{equation}
by using a univalent harmonic mapping $f=h+\overline{g}$ and a holomorphic function $F(w)$ similarly defined by (\ref{eq:FforPhi}). 

Needless to say, the harmonic mapping $f$ in $(\ref{eq:Xmax})$ might be different from one in $(\ref{eq:Xmin})$. However, the following statements guarantee that we can use the same $f$ simultaneously in $(\ref{eq:Xmin})$ and $(\ref{eq:Xmax})$ if $\phi$ and $\psi$ satisfy the duality relation $(\ref{eq:duality})$. 

\begin{proposition}\label{prop:parametric_duality}
Let $f=h+\overline{g}$ be a univalent harmonic mapping from $\mathbb{D}$ onto $\Omega$ such that $h'g'$ has a holomorphic square root $\sqrt{h'g'}$ on $\mathbb{D}$. If we put $F$ as 
\begin{equation} \label{eq:F}
		F(w)=F_f(w)=2i\int_0^w{\sqrt{h'g'}}d{\zeta},
\end{equation}
then the following statements hold.
\begin{itemize}
\item[(i)] The function $\phi={\rm Re}(F)\circ f^{-1}$ gives a minimal graph in $\mathbb{E}^3$, and $X_{\rm min}$ as in \eqref{eq:Xmin} is an isothermal parametrization of ${\rm graph}(\phi)$.
\item[(ii)] The function $\psi={\rm Im}(F)\circ f^{-1}$ gives a maximal graph in $\mathbb{L}^3$, and $X_{\rm max}$ as in \eqref{eq:Xmax} is an isothermal parametrization of ${\rm graph}(\psi)$.
\item[(iii)] If $f$ is orientation-preserving, then $\phi$ in $({\rm i})$ and $\psi$ in $({\rm ii})$ satisfy the duality relation \eqref{eq:duality}. Conversely, if $f$ is orientation-reversing, then $-\psi$ is the dual of $\phi$.
\end{itemize}
\end{proposition}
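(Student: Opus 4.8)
The plan is to verify the three assertions by direct computation with the harmonic mapping $f=h+\overline{g}$ and the holomorphic function $F=2i\int_0^w\sqrt{h'g'}\,d\zeta$, relying on the classical Weierstrass-type representation already recalled in \eqref{eq:Xmin}--\eqref{eq:Xmax}. The key point is that univalence of $f$ guarantees $f^{-1}$ is well defined, so that $\phi=\Re(F)\circ f^{-1}$ and $\psi=\Im(F)\circ f^{-1}$ are genuine functions on $\Omega$; the rest is checking the structure equations. First I would record the first-order data: writing $p=\sqrt{h'g'}$, we have $F_w=2ip$, $F_{\overline w}=0$, while $f_w=h'$, $f_{\overline w}=\overline{g'}$, and the Jacobian of $f$ is $J_f=|h'|^2-|g'|^2$, which is nonvanishing (of one sign) on $\mathbb{D}$ by univalence. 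From these one computes the pulled-back first fundamental forms of $X_{\min}$ and $X_{\max}$ in the $w$-coordinate and checks that both are conformal: for $X_{\min}$ the induced metric is $(|h'|+|g'|)^2|dw|^2$ up to the obvious identification, so $X_{\min}$ is a (branched) conformal harmonic immersion into $\mathbb{E}^3$, hence $\mathrm{graph}(\phi)$ is minimal; for $X_{\max}$ the induced Lorentzian metric is $(|h'|-|g'|)^2|dw|^2$, which is positive definite exactly where $J_f\neq0$, so $X_{\max}$ is a spacelike conformal immersion with zero mean curvature, i.e. $\mathrm{graph}(\psi)$ is maximal and $|\nabla\psi|<1$. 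This gives (i) and (ii); the only subtlety is bookkeeping the identification $\mathbb{E}^3\cong\mathbb{C}\times\mathbb{R}$, $\mathbb{L}^3\cong\mathbb{C}\times\mathbb{R}$, and noting that $X_{\min}$, $X_{\max}$ are genuine immersions (not merely generalized surfaces) precisely because $J_f$ is nonzero.

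For (iii) the strategy is to compute $\nabla\phi$ on $\Omega$ by the chain rule through $f^{-1}$ and substitute into the right-hand side of \eqref{eq:duality}. Concretely, if $(x,y)=f(w)$ then $\phi_x,\phi_y$ are obtained from $\Re(F)_u,\Re(F)_v$ by inverting the differential of $f$; using $F_w=2ip$ one gets $\Re(F)_u=\Re(2ip)=-2\Im(p)$ and $\Re(F)_v=\Re(2i\cdot ip)=-2\Re(p)$ after accounting for the Cauchy--Riemann structure, and similarly $\Im(F)_u=2\Re(p)$, $\Im(F)_v=-2\Im(p)$. Plugging the expression for $dx,dy$ in terms of $du,dv$ (via $dx=\Re(h'\,dw+\overline{g'\,dw})$, etc.) and simplifying with the identity $|h'g'|=|p|^2$ and $\sqrt{1+|\nabla\phi|^2}=(|h'|+|g'|)/(|h'|-|g'|)$ — valid when $J_f>0$ — one checks that $-\dfrac{\phi_y}{\sqrt{1+|\nabla\phi|^2}}\,dx+\dfrac{\phi_x}{\sqrt{1+|\nabla\phi|^2}}\,dy$ reduces to $d\psi=d(\Im(F)\circ f^{-1})$. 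The orientation hypothesis enters exactly through the sign of $J_f$: when $J_f<0$ the same computation produces an overall sign flip, so $d(-\psi)$ satisfies \eqref{eq:duality} instead, giving the stated converse.

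The main obstacle I expect is purely computational: carrying the change of variables from the $w$-disk to $\Omega$ cleanly, since $f$ is only harmonic (not conformal), so $df$ is a general real-linear map and one must invert the matrix $\begin{pmatrix}\Re h'+\Re g' & -\Im h'+\Im g'\\ \Im h'+\Im g' & \Re h'-\Re g'\end{pmatrix}$ and keep track of its determinant $J_f$. It is tempting but cleaner to avoid this by working intrinsically: observe that \eqref{eq:duality} is equivalent to the statement that the $1$-form on the right is closed and that its primitive is $\psi$, and that both $\phi$ and $\psi$ arise as real and imaginary parts of the \emph{same} holomorphic $F$ pulled back by the \emph{same} $f$; one then identifies the right-hand side of \eqref{eq:duality} with the pullback under $f^{-1}$ of an explicit closed $1$-form on $\mathbb{D}$ built from $p\,dw$, whose primitive is manifestly $\Im(F)$. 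With this reformulation the sign dependence on orientation becomes transparent and the proof of (iii) is a short identity check rather than a matrix inversion. I would present (i) and (ii) via the conformal-metric computation and (iii) via this intrinsic $1$-form argument, relegating the explicit formulas to a line or two.
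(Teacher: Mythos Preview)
Your proposal is correct. For (i) and (ii) you and the paper are doing the same thing: the paper simply cites the standard Weierstrass-type verification in \cite[\S10.2]{D}, which is exactly the conformality computation you sketch with the metrics $(|h'|\pm|g'|)^2|dw|^2$.

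For (iii) the approaches genuinely differ. The paper does not invert $df$ or track the $1$-form $d\psi$ at all; instead it computes the unit normal vectors $n_{\min}$ and $n_{\max}$ of the two graphs in two ways---once on $\Omega$ in terms of $\nabla\phi$, $\nabla\psi$, and once on $\mathbb{D}$ in terms of the dilatation $\sqrt{\omega}=\sqrt{g'/h'}$---and then simply compares the first two components of the two expressions for each normal. The duality relation \eqref{eq:duality} drops out immediately, and the orientation dependence is transparent through the sign of $1-|\omega|$ in the third component. This Gauss-map route is shorter and bypasses the matrix inversion you flagged as the main obstacle. Your chain-rule/$1$-form argument is more elementary in that it uses nothing beyond calculus and the definition of $F$, but it is also longer and requires the auxiliary identity $\sqrt{1+|\nabla\phi|^2}=(|h'|+|g'|)/(|h'|-|g'|)$ to be established along the way; the paper's approach packages that identity implicitly into the normal-vector formula.
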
 
\noindent
For a harmonic mapping $f=h+\overline{g}$, the quantity $\omega=\overline{f_{\overline{w}}}/f_w=g'/h'$ is called the {\it analytic dilatation} (or the {\it second Beltrami coefficient}) of $f$. By using it, we can see that $f$ is orientation-preserving (resp.~orientation-reversing) if and only if $|\omega|<1$ (resp.~$|\omega|>1$) and $h'g'$ has a holomorphic square root if and only if so does $\omega$.

\begin{proof}
We can easily prove (i) and (ii) by a similar argument in \cite[Section 10.2]{D}. Then, we here give a proof of (iii).  Assume that $f$ preserves the orientation.
By \eqref{eq:Xmin}, the upward unit normal vector $n_{\text{min}}$ of ${\rm graph}(\phi)$ has the two kinds of representations as follows.
\begin{align}
	\begin{aligned}
		n_{\text{min}}(z)&=\frac{1}{\sqrt{1+|\nabla{\phi}(z)|^2}}(-\phi_{x}(z),-\phi_{y}(z),1) \\ 
		&=\frac{1}{1+|\omega(w)|}(2\mathrm{Im}{\sqrt{\omega(w)}}, 2\mathrm{Re}{\sqrt{\omega(w)}},1-|\omega(w)|), \label{eq:n_min}
	\end{aligned}
\end{align}
where $z\in \Omega$ and $w\in \mathbb{D}$ are related by $z=f(w)$, and $\sqrt{\omega}$ is a holomophic square root of $\omega$ such that $\sqrt{\omega}h'=\sqrt{h'g'}$. Similarly, by \eqref{eq:Xmax},  the future-directed unit normal vector $n_{\text{max}}$ of ${\rm graph}(\psi)$ has the representations
\begin{align}
	\begin{aligned}
		n_{\text{max}}(z)&=\frac{1}{\sqrt{1-|\nabla{\psi}(z)|^2}}(\psi_{x}(z),\psi_{y}(z),1) \\ 
		&=\frac{1}{1-|\omega(w)|}(2\mathrm{Re}{\sqrt{\omega(w)}}, -2\mathrm{Im}{\sqrt{\omega(w)}},1+|\omega(w)|). \label{eq:n_max}
	\end{aligned}
\end{align}
By comparing \eqref{eq:n_min} and \eqref{eq:n_max}, we conclude that $\phi$ and $\psi$ are related by \eqref{eq:duality}. Conversely, if $f$ is orientation-reversing, then $|\omega|>1$. Therefore the equations (\ref{eq:n_min}) and (\ref{eq:n_max}) hold if we multiply the last representations by $-1$, respectively. Similarly, We have the conclusion.
\end{proof}

\begin{remark} 
The statement (iii) of Proposition \ref{prop:parametric_duality} gives another proof of \cite[Theorem 1]{L}, and reveal that the duality \eqref{eq:duality} is nothing but the transformation of minimal and maximal surfaces considered in various situations as in \cite{AL}, \cite{LLS}, \cite{UY1}, in addition to the situations discussed in Introduction.
\end{remark}


\subsection{Generalized minimal and maximal surfaces}

To deal with singularities on minimal and maximal surfaces, we recall the classes of generalized minimal surfaces (see \cite[p.~47]{O}) and generalized maximal surfaces introduced in \cite{EL}.  

Let $X$ be a non-constant harmonic mapping from a Riemann surface $M$ to $\mathbb{E}^3$ (resp.~$\mathbb{L}^3$). Suppose that at any point $p\in M$ there exists a complex coordinate neighborhood $(U; w=u+iv)$ such that the derivatives $\Phi=(\Phi_1,\Phi_2,\Phi_3)=\partial{X}/\partial{w}=\left(\partial{x}/\partial{w}, \partial{y}/\partial{w}, \partial{t}/\partial{w}\right)$ satisfy
 \begin{eqnarray*}
 	\begin{gathered}
		(\Phi_1)^2+(\Phi_2)^2+(\Phi_3)^2=0\\
		\text{(resp.~$(\Phi_1)^2+(\Phi_2)^2-(\Phi_3)^2=0$ and $|\Phi_1|^2+|\Phi_2|^2-|\Phi_3|^2\not \equiv 0$)}. 
	\end{gathered}
\end{eqnarray*}
Then $X$ is said to be a {\it generalized minimal surface} (resp.~ {\it a generalized maximal surface}). 

We remark that for a generalized minimal surface $X$, the condition $|\Phi_1|^2+|\Phi_2|^2+|\Phi_3|^2\not \equiv 0$ holds automatically by the non-constancy of $X$. Further, a point on $M$ at which $X$ satisfies $|\Phi_1|^2+|\Phi_2|^2+|\Phi_3|^2= 0$ is called a {\it branch point} of $X$. 

On the other hand, for a generalized maximal surface $X$, the set of points on $U$ on which $|\Phi_1|^2+|\Phi_2|^2-|\Phi_3|^2=0$ is divided into 
\[
		\mathcal{A}=\{p\in U \mid \text{$X_u(p)$ or $X_v(p)$ is lightlike in $\mathbb{L}^3$} \},\quad \mathcal{B}=\{p\in U\mid d{X}_p=0\}.
\]
We call a point $p$ in $\mathcal{A}\cup \mathcal{B}$ a {\it singular point} of $X$ and, in particular, it is called a {\it branch point} of $X$ if $p\in \mathcal{B}$. For generalized maximal surfaces, Kim and Yang \cite{KY} introduced two kinds of important singular points as follows: A singular point $p \in \mathcal{A}$ is called a {\it shrinking singular point} (or a {\it conelike singular point}) if there is a neighborhood $U$ of $p$ and a regular curve $\gamma\colon I \to U$ from an interval $I$ such that $\gamma(I)\subset \mathcal{A}$ and $X\circ \gamma (I)$ becomes a point in $\mathbb{L}^3$, which we call a {\it shrinking singularity}.
Also a singular point $p \in \mathcal{A}$ is called a {\it folding singular point} (or a {\it fold singular point}) if there is an isothermal coordinate system $(U; u,v)$ such that $p=(0,0)$ and $X_v(u,0)\equiv 0$. We call the image $\{X(u,0)\mid (u,0)\in U)\}$ a {\it folding singularity}. By definition, the curve $\gamma(u)=X(u,0)$ representing the folding singularity is a null curve, which is a curve whose velocity vector field is lightlike.

\section{Duality of boundry value problems} \label{sec:3} 

\subsection{Proof of Theorem \ref{thm:1}} \label{subsec:proofThm1}
Throughout this paper, we assume that a segment is open unless otherwise noted. At first, we introduce the following concept:

\begin{definition}\label{def:lightlike_bdr}
Let $\Omega \subset \mathbb{C}$ be a Jordan domain and $I\subset \partial{\Omega}$ an open line segment with outward unit normal $\nu$. 
We say that a solution $\psi$ of the maximal surface equation over $\Omega$ {\it tamely degenerates to a future-directed lightlike line segment on $I$}, if $\psi$ satisfies 
 \begin{equation}\label{eq:lightlike_bdr_condition}
 \frac{\partial{\psi}}{\partial{\tau}}(z)=1+O(\mathrm{dist}(z,J)^2) \text{ as } z\to J  \nonumber
 \end{equation}
for each closed segment $J\subset I$, where $\partial/\partial \tau$ denotes the directional derivative in the direction $\tau=i\nu$. When $-\psi$ tamely degenerates to a future-directed lightlike line segment, we say that $\psi$ {\it tamely degenerates to a past-directed lightlike line segment}. Moreover, we simply say that $\psi$ {\it tamely degenerates to a lightlike line segment} if it tamely degenerates to a future or past-directed lightlike line segment.
\end{definition}

It can by easily seen that the following statements hold, by definition. According to this fact, the term ``tamely degenerate'' actually defines a  degeneration to a lightlike line segment on the boundary with an asymptotic estimate. 

\begin{proposition}
	Assume that $\psi$ tamely degenerates to a future-directed lightlike line segment on $I\subset \partial \Omega$. Then, 
	\begin{itemize}
		\item $\psi|_I$ parametrizes a lightlike line segment, which is future-directed with respect to the positive orientation on $I\subset \partial \Omega$.
		\item The first fundamental form $ds^2$ of ${\rm graph}(\psi)$ degenerates on $I$:
		
		\noindent
		It holds that $\det (ds_z^2)=1-|\nabla\psi(z)|^2 \to 0$ as $z\to I$.
	\end{itemize}
\end{proposition}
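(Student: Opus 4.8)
The proposition asserts two consequences of the hypothesis that $\psi$ tamely degenerates to a future-directed lightlike line segment on $I$: first, that $\psi|_I$ traces out a lightlike line segment with the correct time-orientation; second, that the first fundamental form degenerates along $I$. My plan is to treat these two bullet points separately, drawing the essential analytic input from the asymptotic estimate $\partial\psi/\partial\tau(z) = 1 + O(\op{dist}(z,J)^2)$ in Definition \ref{def:lightlike_bdr}.

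For the first bullet, I would begin by fixing a closed subsegment $J \subset I$ and parametrizing $I$ by arclength in the direction $\tau = i\nu$; call this parameter $s$. The estimate $\partial\psi/\partial\tau = 1 + O(\op{dist}(z,J)^2)$ is a statement as $z \to J$ from inside $\Omega$, so by continuity of $\nabla\psi$ up to $I$ (which is part of what ``tamely degenerates'' presupposes, since the estimate only makes sense if the tangential derivative extends continuously to $I$) we get $\frac{d}{ds}\big(\psi|_I\big)(s) = 1$ along $J$, hence along all of $I$. Therefore $\psi|_I(s) = s + \text{const}$, which means the curve $s \mapsto (\gamma(s), \psi(\gamma(s)))$ in $\mathbb{L}^3$, where $\gamma$ parametrizes $I \subset \mathbb{C}$ by the same arclength $s$ in direction $\tau$, has velocity $(\tau, 1)$ — a vector of Lorentzian norm $|\tau|^2 - 1 = 1 - 1 = 0$. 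So it is a lightlike (null) line segment, and since its $t$-component $\psi|_I$ is strictly increasing in the positive orientation parameter $s$, it is future-directed as claimed. The orientation bookkeeping — matching ``positive orientation on $I \subset \partial\Omega$'' with the direction $\tau = i\nu$ for the outward normal $\nu$ — is a small point to get right but is not substantive.

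For the second bullet, the key is to control the normal derivative $\partial\psi/\partial\nu$ near $I$, given that we only directly know the tangential derivative. I would use the maximal surface equation $|\nabla\psi| < 1$, which gives $|\nabla\psi(z)|^2 = (\partial\psi/\partial\tau)^2 + (\partial\psi/\partial\nu)^2 < 1$ pointwise in $\Omega$. Combined with $(\partial\psi/\partial\tau)^2 = (1 + O(\op{dist}(z,J)^2))^2 = 1 + O(\op{dist}(z,J)^2)$, this forces $(\partial\psi/\partial\nu)^2 < 1 - (\partial\psi/\partial\tau)^2 = O(\op{dist}(z,J)^2)$, hence $|\nabla\psi(z)|^2 \to 1$ and in fact $1 - |\nabla\psi(z)|^2 = O(\op{dist}(z,J)^2) \to 0$ as $z \to J$, for every closed $J \subset I$, which yields convergence to $0$ as $z \to I$. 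Finally I would recall that for the graph $X = (z, \psi(z))$ the induced metric has $\det(ds_z^2) = 1 - |\nabla\psi(z)|^2$ (a standard computation: the first fundamental form of a spacelike graph in $\mathbb{L}^3$ is $\delta_{ij} - \psi_i\psi_j$, whose determinant is $1 - |\nabla\psi|^2$), so the degeneration statement follows immediately.

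The main obstacle, such as it is, is the subtle point in the first bullet: the hypothesis as literally stated is an asymptotic estimate on the \emph{interior} directional derivative as $z \to J$, and to conclude that $\psi|_I$ is genuinely a line segment one needs the tangential derivative of $\psi$ to extend continuously to $I$ with the limiting value $1$. I would argue that this continuity is implicit in the formulation of Definition \ref{def:lightlike_bdr} (the estimate presupposes $z$ ranges over points near $J$, and the $O$-term vanishing means the boundary trace of $\partial\psi/\partial\tau$ exists and equals $1$); alternatively, one can integrate $\partial\psi/\partial\tau$ along segments parallel to $I$ at distance $\varepsilon$ and let $\varepsilon \to 0$, using the uniform $O(\varepsilon^2)$ control to pass to the limit. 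Everything else is routine: pointwise algebra from $|\nabla\psi| < 1$, and the standard determinant formula for a graph metric.
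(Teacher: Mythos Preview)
Your proof is correct and faithfully unpacks Definition \ref{def:lightlike_bdr}; the paper itself provides no argument beyond the remark that the proposition holds ``by definition.'' Your key observations --- that $|\nabla\psi|<1$ forces $0<1-|\nabla\psi|^2\leq 1-(\partial\psi/\partial\tau)^2\to 0$, and that the Lipschitz bound allows $\psi$ to extend continuously to $I$ (as the paper itself later uses in the proof of Theorem \ref{thm:reflection}) so that $\psi|_I$ makes sense --- are exactly what is needed.
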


\begin{remark}
Under the notations in Definition \ref{def:lightlike_bdr}, if we assume that $\psi$ is $C^2$-differentiable on $\Omega\cup I$ and ${\rm graph}(\psi)$ has a lightlike line segment $L$ over $I$, then $\psi$ tamely degenerates to $L$ automatically. 
Under this assumption, the statement (i) in Theorem \ref{thm:1} was proved in \cite{AUY} and it played an important role to prove an improvement of the Bernstein-type theorem in $\mathbb{L}^3$.  
\end{remark}

To prove Theorem \ref{thm:1}, we recall the following Hengartner-Schober's result \cite[Theorem 4.3]{HS} (cf.~\cite[page 35]{D})

\begin{lemma}\label{lemma:HS}
Let $f$ be an orientation-preserving univalent harmonic mapping from $\mathbb{D}$ into a bounded Jordan domain $\Omega$. Suppose the radial limit $\lim_{r\to 1}{f(re^{i\theta})}$ exists and belongs to $\partial{\Omega}$ for almost every $\theta$. Then there exists a countable set $E\subset \partial{\mathbb{D}}$ which satisfies the following:
\begin{itemize}
\item[(i)] For each $e^{i\theta}\in \partial \mathbb{D} \setminus E$, the unrestricted limit $\hat{f}(e^{i\theta})=\lim_{w\to e^{i\theta}}{f(w)}$ exists and belongs to $\partial \Omega$. Further $\hat{f}$ is continuous on $\partial{\mathbb{D}}\setminus E$.
\item[(ii)] The one-sided limits
\[
\hat{f}{(e^{i\theta_+})}=\lim_{t \to \theta^+}{\hat{f}(e^{it})},\quad \hat{f}{(e^{i\theta_-})}=\lim_{t \to \theta^-}{\hat{f}(e^{it})}
\]
exist, belong to $\partial{\Omega}$ and are different for each $e^{i\theta}\in E$. Here the limits are taken on $\partial \mathbb{D} \setminus E$.
\item[(iii)] The cluster set $C(f,e^{i\theta})$ of $f$ at $e^{i\theta}\in E$ is the closed line segment joining $\hat{f}{(e^{i\theta_+})}$ and $\hat{f}{(e^{i\theta_-})}$.
\end{itemize}
\end{lemma}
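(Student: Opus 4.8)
The plan is to realise $f$ as the Poisson integral of its boundary function and then to read off (i)--(iii) from the order structure that univalence imposes on that boundary function. Since $\Omega$ is bounded, each real coordinate of $f$ is a bounded harmonic function on $\mathbb{D}$, so by Fatou's theorem together with the representation theorem for bounded harmonic functions the radial limit $\hat f(e^{i\theta})$ exists for almost every $\theta$ and $f=P[\hat f]$ is the Poisson integral of $\hat f$. The standing hypothesis guarantees $\hat f(e^{i\theta})\in\partial\Omega$ for almost every $\theta$, so $\hat f$ is an almost-everywhere-defined map from $\partial\mathbb{D}$ into the Jordan curve $\partial\Omega$.

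The crux is to show that $\hat f$ is \emph{weakly monotone}, meaning that it respects the cyclic order of $\partial\mathbb{D}$ and $\partial\Omega$ (it may be locally constant or jump, but never reverses orientation). For this I would argue by degree theory: for $r<1$ the restriction $f|_{\{|w|=r\}}$ is an injective loop, hence an orientation-preserving homeomorphism of degree $+1$ onto the Jordan curve $\gamma_r=f(\{|w|=r\})$ bounding the domain $f(\{|w|<r\})$. A reversal in the cyclic order of four radial boundary limits would, by the interior continuity of $f$ and the winding number $+1$ on circles $|w|=r$ close to $1$, force two distinct interior points to share an image, contradicting univalence. This topological step, together with the passage to the almost-everywhere-defined radial boundary function, is where orientation-preservation and univalence are genuinely used, and it is the main obstacle.

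Once weak monotonicity is in hand, the remaining statements follow from soft facts about monotone functions and Poisson integrals. A weakly monotone map into a Jordan curve possesses one-sided limits $\hat f(e^{i\theta_\pm})$ at every point and has at most countably many jumps; I would define $E$ to be the (countable) set of jump points, which immediately yields (ii) and the continuity of $\hat f$ on $\partial\mathbb{D}\setminus E$. For (i), at a point $e^{i\theta}\notin E$ the essential one-sided limits of $\hat f$ agree, and the local continuity property of the Poisson kernel---split the Poisson integral into a part near $\theta$ and a far part and use concentration of the kernel---upgrades this to existence of the unrestricted limit $\hat f(e^{i\theta})=\lim_{w\to e^{i\theta}}f(w)\in\partial\Omega$.

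Finally, for (iii) at a jump point $e^{i\theta_0}\in E$ I would isolate the jump. Subtract from $\hat f$ a fixed vector-valued step function whose single jump at $\theta_0$ equals $\hat f(e^{i\theta_0+})-\hat f(e^{i\theta_0-})$; the corrected boundary function is continuous at $\theta_0$, so by the local Poisson continuity used for (i) its Poisson integral has a genuine unrestricted limit, a single point $p=\hat f(e^{i\theta_0-})$. The Poisson integral of the step function is the jump vector multiplied by one explicit scalar harmonic function whose boundary-approach values fill the whole interval $[0,1]$, the relevant level sets near $\theta_0$ being circular arcs that realise every convex weight. Hence $C(f,e^{i\theta_0})=p+[0,1]\,(\hat f(e^{i\theta_0+})-\hat f(e^{i\theta_0-}))$, which is exactly the closed segment joining $\hat f(e^{i\theta_0-})$ and $\hat f(e^{i\theta_0+})$; it is the linearity of the Poisson integral that forces the cluster set to be a straight segment rather than some larger set.
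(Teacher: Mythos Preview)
The paper does not give its own proof of this lemma: it is stated as a quotation of Hengartner--Schober \cite[Theorem 4.3]{HS} (with a pointer to \cite[p.~35]{D}) and is used as a black box. So there is no in-paper argument to compare against.

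That said, your outline is essentially the route taken in the original sources. Representing $f$ as the Poisson integral of its radial boundary values, establishing that $\hat f$ is weakly monotone on $\partial\mathbb{D}$ (hence has only countably many jumps and one-sided limits everywhere), upgrading continuity points to unrestricted limits via Poisson-kernel concentration, and identifying the cluster set at a jump by subtracting an explicit step function---this is exactly the structure of the Hengartner--Schober/Duren argument. You have correctly flagged the monotonicity step as the genuine obstacle; in the references it is carried out with some care (the a.e.\ nature of the radial limits means the four-point reversal argument needs a bit more than a one-line appeal to degree), but the idea you describe is the right one. In short: your proposal is sound and matches the cited proof, though the paper itself simply invokes the result.
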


\noindent
We should remark that since a univalent harmonic mapping onto a bounded Jordan domain always satisfies the assumption of Lemma \ref{lemma:HS} (cf.~\cite[p.5]{D}), we can take such a countable set $E$ of the discontinuous points, and $f$ can be written as the Poisson integral of its boundary function $\hat{f}$.\\

\noindent
{\bf Proof of Theorem \ref{thm:1}}.
The following proof of (i) $\Rightarrow$ (ii) is based on a standard argument by using the estimate in \cite[Lemma 1]{JS}, and (i) $\Rightarrow$ (iii) is  essentially given by Bshouty and Weitsman in \cite[Theorem 1]{BW}, however, we give here the proofs of these parts for the sake of completeness and since the settings are slightly different.

We may assume that $I=(a,b)$, ($a<b$) and $\Omega$ lies in the upper half-plane $\mathbb{H}$ along $I$. Henceforth, we consider the problems under this situation.

First we assume that $\phi \to \infty$ as $z\to I$, and prove that $\psi_x(z)=1+O(\mathrm{dist}(z,J)^2)$ as $z\to J$ for every closed segment $J\subset I$. For arbitrary $x_0\in I$, we take $\varepsilon >0$ such that $D(x_0,9\varepsilon) \cap (\partial{\Omega}\setminus I)=\emptyset$, where $D(x_0, R)$ denotes the open disk centered at $x_0$ with radius $R$. 
We set $J_0=(x_0-\varepsilon, x_0+\varepsilon)$ and $D=D(x_0,9\varepsilon)\cap \mathbb{H}$.
If $z\in D(x_0,\varepsilon)\cap \mathbb{H}$, then
\[
d:=\mathrm{dist}(z,\partial{D})=\mathrm{dist}(z,J_0)=\mathrm{Im}{z}<\varepsilon.
\]
Moreover, if we set $\Sigma_{\min}={\rm graph}(\phi)$ and $\Sigma'=\Sigma_{\text{min}}|_D$, then the geodesic distance $r$ from $(z,\phi(z))\in \Sigma'$ to $\partial{\Sigma'}$ satisfies
\begin{equation}\label{eq:8epsilon}
r\geq \mathrm{dist}(z,\partial{D}\setminus I)>8\varepsilon,
\end{equation}
by the assumption $\phi(z)\to \infty$ ($z\to I$). Thus we can apply \cite[Lemma 1]{JS} to the convex domain $D$ since $d<r/8$, and we have
\[
1>\frac{|\phi_y(z)|}{\sqrt{1+|\nabla{\phi}|^2}} \geq 1-4\frac{d^2}{r^2}.
\]
In particular, the sign of the continuous function $\phi_y$ does not change on $D(x_0,\varepsilon)\cap \mathbb{H}$. Taking into account the assumption $\phi(z)\to \infty$ ($z\to I$), we have $|\phi_y(z)|=-\phi_y$ and 
\[
1>\psi_x(z)\geq 1-4\frac{d^2}{r^2}>1-\frac{1}{16\varepsilon^2}\mathrm{dist}(z,J_0)^2,
\]
by the duality \eqref{eq:duality} and \eqref{eq:8epsilon}. Thus we have $|1-\psi_x(z)|<C\mathrm{dist}{(z,J_0)}^2$ for $C=1/(16\varepsilon^2)$. Since each closed segment $J\subset I$ is covered by a finite number of such $J_0$, we obtain the desired estimate.

Conversely, let us assume that $\psi$ tamely degenerates to a future-directed lightlike line segment on $I$. We prove that $\phi(z_n)\to \infty$ for each sequence $\{z_n\}_n$ in $\Omega$ which converges to $z_0\in I$.
If we take $\varepsilon>0$ sufficiently small, then $R(z_0,\varepsilon)\cap (\partial{\Omega}\setminus I)=\emptyset$, where  $R(z_0,\varepsilon)=\{z\in \mathbb{C}\mid |\mathrm{Re}{z}-\mathrm{Re}{z_0}|\leq \varepsilon,  |\mathrm{Im}{z}-\mathrm{Im}{z_0}|\leq \varepsilon\}$, and there exists $C'>0$ such that
\[
|1-\psi_x(z)|\leq C'|\mathrm{Im}{z}|^2
\]
holds for $z\in R(z_0,\varepsilon) \cap \Omega$.
Then the inequality $1-|\nabla{\psi}|^2\leq (1+\psi_x)(1-\psi_x)<2C'|\mathrm{Im}{z}|^2$ holds. Therefore, we obtain
\[
-\phi_y=\frac{\psi_x}{\sqrt{1-|\nabla{\psi}|^2}} \geq\frac{1-C'|\mathrm{Im}z|^2}{\sqrt{2C'}|\mathrm{Im}z|} \geq \frac{C_1}{|\mathrm{Im}z|}-C_2
\] 
for some $C_1, C_2>0$. Without loss of generality, we may suppose that the sequence $\{z_n\}_n$ is in $R(z_0,\varepsilon) \cap \Omega$
and hence $z_n=x_n+iy_n$ satisfies
\begin{align*}
\phi(z_n)-\phi(z_1)&=\displaystyle \int_{x_1}^{x_n}\phi_x(x+iy_1)dx+\displaystyle \int_{y_1}^{y_n}\phi_y(x_n+iy)dy\\
&\geq -\displaystyle \int_{\mathrm{Re}{z_0}-\varepsilon}^{\mathrm{Re}{z_0}+\varepsilon}|\phi_x(x+iy_1)|dx + \displaystyle \int_{y_n}^{y_1}\left(\frac{C_1}{y}-C_2 \right)dy\\[1ex]
&\geq -C_1 \log{y_n}+C_3
\end{align*}
for some $C_3\in \mathbb{R}$. Taking the limit $y_n \searrow 0$ ($n\to \infty$), we obtain $\phi(z_n)\to \infty$.

Finally, we shall prove the statement (iii) under the assumption (ii). By the third component of \eqref{eq:n_max}, we have
\[
\frac{1}{\sqrt{1-|\nabla{\psi}(z)|^2}}=\frac{1+|\omega(w)|}{1-|\omega(w)|}
\]
for $z=f(w)$. Since $|\nabla{\psi}(z)|\to 1$, ($z\to I$), the analytic dilatation $\omega$ also satisfies $|\omega(w)|\to 1$ ($z\to I$). Moreover, the first and second  components of \eqref{eq:n_max} yield the relation
\[
\sqrt{\omega(w)}=\frac{1-|\omega(w)|}{\sqrt{1-|\nabla{\psi}(z)|^2}}\psi_z(z)=(1+|\omega(w)|)\psi_z(z).
\]
Therefore, we obtain $\omega(w)\to 1$, ($z\to I$) since $(\psi_x(z),\psi_y(z))\to (1,0)$, ($z\to I$).

Let $J'=C(f^{-1},I)=\cup_{z\in I}\ C(f^{-1},z)$, and let $E$ be the set of discontinuous points of $\hat{f}$ as in Lemma \ref{lemma:HS}. If we assume that $|J'|>0$, where $|J'|$ denotes the Lebesgue measure of $J'\subset \partial \mathbb{D}\cong \mathbb{R}/\mathbb{Z}$, then $|J'\setminus E|>0$ since $E$ is countable. 
By Lemma \ref{lemma:HS}, for each $e^{i\theta}\in J'\setminus E$, the unrestricted limit of $f(w)$ as $w\rightarrow e^{i\theta}$ exists and belongs to $I$. In particular, the radial limit $z=f(re^{i\theta})$ converges to a  point in $I$ as $r\to 1$, and hence we have $\omega(re^{i\theta}) \to 1$ as $r\to 1$. By F.-M.~Riesz's theorem (cf.~\cite[p.~220, Theorem A.3]{M}), we have $\omega \equiv 1$, which contradicts to the fact that $|\omega|<1$ in $\mathbb{D}$. Therefore, we conclude that $|J'|=0$.
On the other hand, if there does not exist $w_0\in E$ such that $I\subset C(f,w_0)$, then there are at least two distinct points $w_1, w_2\in J'$. Since $f$ is a homeomorphism, one of the two arcs connecting $w_1$ and $w_2$ on $\partial{\mathbb{D}}$ is included in $J'$. Then $|J'|>0$, which is a contradiction.
\hfill $\square$

\subsection{Converse of Theorem \ref{thm:1}} \label{subsec:converseThm1}
Next, we discuss when the third condition (iii) in Theorem \ref{thm:1} conversely implies the conditions (i) and (ii) here. 

Let $\Omega,\ \phi,\ \psi$ and $f=h+\overline{g}$ be as in Theorem \ref{thm:1}.
We suppose that the boundary function $\hat{f}$ of $f$ admits a discontinuous point $w_0 \in E$, where $E$ denotes the set of discontinuous points of $\hat{f}$. As mentioned previously, in this case, the one-sided limits $z_0^{\pm}=\hat{f}(w_0^{\pm})$ exist in $\partial \Omega$ and are different. Such a discontinuous point is usually called a {\it jump point} (or a {\it discontinuous point of the first kind}).

\begin{definition}
		Let $\sigma \colon [0,\infty)\rightarrow [0,\infty)$ be a monotone increasing continuous function with $\sigma(0)=0$. We say that $w_0 \in E$ is {\it $\sigma$-regular} if there exist $\delta, C>0$ such that the following inequality holds whenever $|t|<\delta$ and $w_0 e^{it}\in \partial \mathbb{D} \setminus E$:
		\begin{align*}
			|\hat{f}(w_0 e^{it})-z_0^+|\leq C\sigma(|w_0 e^{it}-w_0|) &\ \  {\rm if}\ \ (t>0),\\
			|z_0^--\hat{f}(w_0 e^{it})|\leq C\sigma(|w_0-w_0 e^{it}|) &\ \ {\rm if}\ \ (t<0).
		\end{align*}
		Further, if we can take $\sigma(t)=t^{\lambda}$ for some $0<\lambda\leq 1$, then we say that $w_0$ is {\it $\lambda$-H\"older regular}.
\end{definition}

A function $\mu\colon\mathbb{D}\to \mathbb{C}$ is said to have a {\it non-tangential limit} (or {\it angular limit}) $a\in\mathbb{C}$ at $\zeta\in \partial \mathbb{D}$ if $\mu(w)$ converges to $a$ as $w\to \zeta$ in each Stolz angle $A_{\alpha}=A_{\alpha}(\zeta)=\{w\in \mathbb{D} \mid -\alpha< \arg(1-\overline{\zeta}w)<\alpha, |\zeta-w|<\cos \alpha\},\ 0<\alpha<\pi/2$. In this article, we denote this by
\[
		\alim_{w\to \zeta} \mu (w)=a.
\]
Then, we have the following statement.

\begin{theorem}\label{thm:5}
	Under the assumptions mentioned above, if $w_0$ is $\lambda$-H\"older regular, then there exists a constant $M\in \mathbb{C}$ such that the following holds$:$
	\[
		\alim_{w\rightarrow w_0} \left|F(w)-c\frac{|z_0^+-z_0^-|}{\pi}\log(w-w_0)-M\right|=0.
	\]
Here, $F=F_f$ is defined by $(\ref{eq:F})$, and $c=1$ or $-1$ which is determined by the choice of the branch of $\sqrt{h'g'}$.
\end{theorem}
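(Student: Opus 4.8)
The plan is to analyze $F=F_f=2i\int_0^w\sqrt{h'g'}\,d\zeta$ by understanding the behavior of the analytic dilatation $\omega=g'/h'$ near $w_0$, and ultimately reduce the claim to a standard boundary-integral estimate for the Poisson representation of the univalent harmonic mapping $f$. First I would recall that, since $f$ is a univalent harmonic mapping onto a bounded Jordan domain, it is the Poisson integral of its boundary function $\hat f$, which by Lemma~\ref{lemma:HS} has a jump discontinuity at $w_0$ with one-sided limits $z_0^{\pm}$. The idea is that $f$ near $w_0$ looks, to leading order, like a ``slit-type'' mapping with a logarithmic singularity in its derivative: subtracting off the explicit Poisson integral of the unit step function (a Cauchy/Schwarz-type kernel) from $\hat f$, one expects $f_w(w)$ to have a term behaving like $\dfrac{z_0^+-z_0^-}{2\pi i}\,\dfrac{1}{w-w_0}$ plus a lower-order remainder, the size of the remainder being controlled by the $\lambda$-H\"older regularity hypothesis through a Privalov/Hardy–Littlewood-type estimate.

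The key steps, in order, would be: (1) Normalize so that $w_0=1$ and translate so that $z_0^-=0$; write $\hat f = (z_0^+-z_0^-)\chi + \rho$ where $\chi$ is the characteristic function of the boundary arc on which $\hat f$ is ``past'' the jump, and $\rho$ is the continuous remainder which, by $\lambda$-H\"older regularity at $w_0$, satisfies $|\rho(w_0 e^{it})|\lesssim |t|^{\lambda}$ near $0$. (2) Compute the Poisson integral of $\chi$ explicitly: its complex derivative is a constant times $(w-w_0)^{-1}$ up to a bounded holomorphic term, so $f_w(w) = \dfrac{z_0^+-z_0^-}{2\pi i}\,\dfrac{1}{w-w_0} + E(w)$, where $E$ is the complex derivative of the Poisson extension of $\rho$ plus the antiholomorphic part's contribution; the Hardy–Littlewood lemma gives $|E(w)| \lesssim |w-w_0|^{\lambda-1}$ inside any Stolz angle. (3) Since $\sqrt{h'g'} = h'\sqrt{\omega}$ and $f_w = h'$ (so $g' = \omega h' = f_w \cdot \overline{f_{\bar w}}/f_w$, using $\omega = \overline{f_{\bar w}}/f_w$), relate $\sqrt{h'g'}$ to $f_w$: as $w\to w_0$ non-tangentially, $z=f(w)\to z_0^+$ or $z_0^-$ along $I$ (this uses condition (iii) and Lemma~\ref{lemma:HS}), and from the proof of Theorem~\ref{thm:1} we already know $\omega(w)\to 1$ there, so $\sqrt{\omega(w)}\to c$ for a fixed sign $c=\pm1$ (the branch choice). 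Hence $\sqrt{h'g'}(w) = c\,h'(w)(1+o(1)) = c\,f_w(w)(1+o(1))$ non-tangentially. (4) Integrate:
\[
F(w) = 2i\int_0^w \sqrt{h'g'}\,d\zeta = 2i c\int^w f_w\,d\zeta + (\text{correction from the }o(1)\text{ factor}),
\]
where $\int^w f_w\,d\zeta$ picks up, from the pole term in step (2), a contribution $\dfrac{z_0^+-z_0^-}{2\pi i}\log(w-w_0)$; multiplying by $2ic$ yields $c\dfrac{z_0^+-z_0^-}{\pi}\log(w-w_0)$, and since $|I|=|z_0^+-z_0^-|$ (the jump length equals the length of the segment in $C(f,w_0)$ by Lemma~\ref{lemma:HS}(iii)), this is $c\dfrac{|z_0^+-z_0^-|}{\pi}\log(w-w_0)$ up to a unimodular rotation absorbed into $M$ — more precisely one must track that $I$ is a line segment so $(z_0^+-z_0^-)$ and $|z_0^+-z_0^-|$ differ only by a fixed phase, which is then folded into the additive constant $M$ together with the convergent part of all integrals. (5) Check that every error term — the $o(1)$ from step (3) integrated against the $(w-w_0)^{-1}$ pole, and the $|w-w_0|^{\lambda-1}$ term $E$ integrated — produces a function with a non-tangential limit $0$ (the first because $o(1)\cdot(w-w_0)^{-1}$ is integrable near $w_0$ after one notes $\log$ growth is killed, the second because $\lambda-1>-1$ makes it integrable), which is exactly the assertion $\alim_{w\to w_0}|F(w) - c\frac{|z_0^+-z_0^-|}{\pi}\log(w-w_0)-M|=0$.

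The main obstacle I anticipate is step (3)–(5): controlling the error term $E(w)$ and the ``$o(1)$'' factor uniformly in Stolz angles and verifying that, after integration against the logarithmically singular kernel, the remainder genuinely has non-tangential limit $0$ rather than merely being bounded. This requires the Hardy–Littlewood/Privalov estimate for the Poisson extension of a $\lambda$-H\"older function (hence the necessity of the H\"older, rather than merely $\sigma$-regular, hypothesis), and careful bookkeeping of which constants are holomorphic-and-bounded (absorbed into $M$) versus genuinely singular. A secondary subtlety is justifying that $\sqrt{\omega}\to c$ non-tangentially with a definite sign and no oscillation — this follows from $\omega(w)\to 1$ (established in the proof of Theorem~\ref{thm:1}) together with continuity of the chosen branch of $\sqrt{\cdot}$ near $1$, but one should confirm the non-tangential approach of $w$ to $w_0$ indeed forces $z=f(w)$ to approach $\partial\Omega$ within $I$, which is where condition (iii) and the structure of the cluster set $C(f,w_0)$ enter.
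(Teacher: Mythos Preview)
Your overall strategy---isolate the leading pole of the derivative via the Poisson/Cauchy representation of $f$ and then integrate---is exactly the paper's, and your step (2) is essentially the paper's computation of $h'$. The gap is in steps (3)--(4). You invoke ``$\omega(w)\to 1$ from the proof of Theorem~\ref{thm:1}'', but that argument derived $\omega\to 1$ \emph{from} condition (ii), and only after rotating so that $I$ lies on the real axis; here you are in the converse setting (assuming only the jump at $w_0$), so that conclusion is unavailable, and without the rotation the non-tangential limit of $\omega$ is the unimodular number $\overline{(z_0^+-z_0^-)}/(z_0^+-z_0^-)$, not $1$. Consequently the output of your step (4) is $c\,\frac{z_0^+-z_0^-}{\pi}\log(w-w_0)$, and your claim that the phase of $z_0^+-z_0^-$ can be ``folded into the additive constant $M$'' is simply false: the two candidate main terms differ by a nonzero complex multiple of $\log|w-w_0|$, which diverges. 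A further issue is that a bare $o(1)$ for $\sqrt{\omega}-c$ carries no rate, so the error $\int o(1)\cdot(w-w_0)^{-1}\,d\zeta$ need not have a limit at all.

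The fix---and this is what the paper does---is to compute $g'$ by the \emph{same} Cauchy-integral method you used for $h'=f_w$, obtaining non-tangentially
\[
h'(w)=-\frac{z_0^+-z_0^-}{2\pi i(w-w_0)}\bigl(1+O(|w-w_0|^{\lambda})\bigr),\qquad
g'(w)=-\frac{\overline{z_0^+-z_0^-}}{2\pi i(w-w_0)}\bigl(1+O(|w-w_0|^{\lambda})\bigr).
\]
The product $h'g'$ then has leading coefficient $(z_0^+-z_0^-)\,\overline{(z_0^+-z_0^-)}=|z_0^+-z_0^-|^{2}$, so the modulus appears automatically in $2i\sqrt{h'g'}=c\,\frac{|z_0^+-z_0^-|}{\pi(w-w_0)}\bigl(1+O(|w-w_0|^{\lambda})\bigr)$ with no phase to absorb, and the remainder is now quantitatively $O(|w-w_0|^{\lambda-1})$, hence integrable. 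Integrating gives the statement directly.
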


\noindent
Recall that $\phi\circ f={\rm Re}(F)$ and $\psi\circ f={\rm Im}(F)$ hold up to additive constants.
Therefore, if $w_0$ is $\lambda$-H\"older regular, then there exist constants $M_1, M_2\in \mathbb{R}$
such that
\begin{align}
	&\alim_{w\rightarrow w_0} \left|\varphi\circ f(w)-c\frac{|z_0^+-z_0^-|}{\pi}\log|w-w_0|-M_1\right|=0,\label{eq:Rpart}\\[1ex]
	&\alim_{w\rightarrow w_0} \left|\psi\circ f(w)-c\frac{|z_0^+-z_0^-|}{\pi}\alpha(w;w_0)-M_2\right|=0. \label{eq:Ipart}
\end{align}
Here, we set $\alpha(w;w_0)=\arg(i(1-\overline{w_0}w))$. Combining (\ref{eq:Rpart}) and Theorem \ref{thm:1}, the following corollary is immediately obtained:

\begin{corollary} \label{cor:1}
		Under the same assumptions as in Theorem \ref{thm:5}, suppose $w_0$ is $\lambda$-H\"oder regular. Then $({\rm i}')$ $\phi$ tends to plus or minus infinity on $I_0$, and $({\rm ii}')$ $\psi$ tamely degenerates to a future or past-directed lightlike line segment on $I_0$, respectively. Here $I_0$ is the open segment given by removing the endpoints from the segment $C(f,w_0)$.
\end{corollary}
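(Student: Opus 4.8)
The plan is to read the corollary off from Theorem~\ref{thm:5} through its real and imaginary consequences \eqref{eq:Rpart}--\eqref{eq:Ipart}, and then to feed the resulting boundary behaviour of $\phi$ into the equivalence (i)$\Leftrightarrow$(ii) of Theorem~\ref{thm:1}. First I would note that, since $w_0\in E$ is a jump point, $z_0^+\neq z_0^-$, so $|z_0^+-z_0^-|>0$; as $w\to w_0$ inside any Stolz angle $A_\alpha(w_0)$ we have $\log|w-w_0|\to-\infty$, whence \eqref{eq:Rpart} gives
\[
\alim_{w\to w_0}\phi\circ f(w)=
\begin{cases}
+\infty & (c=-1),\\
-\infty & (c=+1).
\end{cases}
\]
Thus $\Re F=\phi\circ f$ blows up with a definite sign along every non-tangential approach to $w_0$, the sign being fixed by the branch constant $c$.

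Next I would identify $I_0$ as a genuine open line segment of $\partial\Omega$ and set up the dictionary between the geometry at $w_0$ and along $I_0$. Since $f$ is a homeomorphism of $\mathbb{D}$ onto the Jordan domain $\Omega$ it is proper, so $C(f,w_0)\subset\partial\Omega$; by Lemma~\ref{lemma:HS}(iii) this cluster set is the closed segment $[z_0^-,z_0^+]$, and $I_0$ is its interior. The crucial point is that approaching an \emph{interior} point $z^*\in I_0$ forces the preimages to approach $w_0$ \emph{non-tangentially}. To see this I would use the local behaviour of $f$ at the jump: because $\hat f$ has a jump of size $z_0^+-z_0^-$ at $w_0$ and is $\lambda$-H\"older regular there, the Poisson representation of $f$ yields, up to relabelling the endpoints,
\[
f(w)=z_0^-+\frac{z_0^+-z_0^-}{\pi}\,\alpha(w;w_0)+o(1)\qquad(w\to w_0),
\]
where $\alpha(w;w_0)=\arg(i(1-\overline{w_0}w))\in(0,\pi)$ is exactly the harmonic-measure angle and the H\"older regularity supplies the $o(1)$ remainder uniformly in the direction of approach. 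Writing $z^*=z_0^-+t^*(z_0^+-z_0^-)$ with $t^*\in(0,1)$, any sequence $z_n\to z^*$ in $\Omega$ then has preimages $w_n=f^{-1}(z_n)\to w_0$ (the only boundary point whose cluster set contains the interior point $z^*$, exactly as in the proof of Theorem~\ref{thm:1}) with $\alpha(w_n;w_0)\to\pi t^*$; since this limit is bounded away from $0$ and $\pi$, the $w_n$ eventually lie in a fixed Stolz angle at $w_0$.

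Combining the two steps proves (i$'$): for every $z^*\in I_0$ and every $z_n\to z^*$ in $\Omega$ the preimages approach $w_0$ non-tangentially, so $\phi(z_n)=\phi\circ f(w_n)\to\pm\infty$, with the sign constant along all of $I_0$ (namely $+\infty$ if $c=-1$ and $-\infty$ if $c=+1$). Hence $\phi$ tends to plus or minus infinity on $I_0$. For (ii$'$) I would then apply Theorem~\ref{thm:1} with $I:=I_0$, which is legitimate because $I_0$ is an open line segment of $\partial\Omega$ and $\phi,\psi,f$ are as in that theorem. If $\phi\to+\infty$ on $I_0$ ($c=-1$), the implication (i)$\Rightarrow$(ii) gives that $\psi$ tamely degenerates to a future-directed lightlike segment on $I_0$; if $\phi\to-\infty$ ($c=+1$), applying the same implication to $-\phi$, whose dual is $-\psi$ by Proposition~\ref{prop:parametric_duality}(iii), shows that $-\psi$ degenerates to a future-directed segment, i.e.\ $\psi$ degenerates to a past-directed one. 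This matches the cases ``plus or minus'' and ``future or past'' respectively, completing (ii$'$).

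I expect the main obstacle to be the boundary expansion of $f$ near $w_0$ used in the second paragraph, i.e.\ rigorously upgrading the \emph{non-tangential} control of $\Re F$ in \eqref{eq:Rpart} to a statement about the \emph{boundary} values of $\phi$ along $I_0$. The delicate issue is that \eqref{eq:Rpart}--\eqref{eq:Ipart} only constrain Stolz-angle limits, whereas a priori $f^{-1}(z_n)$ could tend to $w_0$ tangentially; the harmonic-measure expansion, together with the $\lambda$-H\"older regularity needed to bound its remainder uniformly in direction, is precisely what rules this out and also explains why the endpoints of $C(f,w_0)$ (where $\alpha\to 0,\pi$, i.e.\ tangential approach) must be excluded from $I_0$.
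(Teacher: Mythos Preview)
Your argument is correct and matches the paper's approach exactly: the paper derives the corollary ``immediately'' from \eqref{eq:Rpart} and Theorem~\ref{thm:1}, and you have supplied the one detail it omits---that convergence to an interior point of $I_0$ forces non-tangential approach to $w_0$, via the unrestricted Poisson expansion of $f$ at the jump (which the paper itself records just after the corollary). The only quibble is that your parenthetical appeal to ``the proof of Theorem~\ref{thm:1}'' for $w_n\to w_0$ is slightly circular (that part of the proof assumes condition (ii), which is what you are establishing); the claim follows directly instead from the fact that $f$ is a homeomorphism together with the same unrestricted expansion of $f$ near $w_0$.
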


\noindent
The statement $({\rm ii}')$ follows from $({\rm i}')$ and Theorem \ref{thm:1}. However, (\ref{eq:Ipart}) and the well-known boundary behavior of $f$
\[
	\lim_{w\rightarrow w_0}\left| f(w)-\left\{z_0^+ \left(1-\frac{\alpha(w;w_0)}{\pi}\right)+z_0^- \frac{\alpha(w;w_0)}{\pi}\right\} \right| =0,
\]
actually implies that $\psi|_{I_0}$ parametrizes a future or past-directed lightlike line segment, see Figure \ref{zu15}.
\begin{figure}[htb]
 \begin{center}
   \begin{tabular}{c@{\hspace{0.3cm}}c@{\hspace{-0.4cm}}c}
        \includegraphics[scale=0.90]{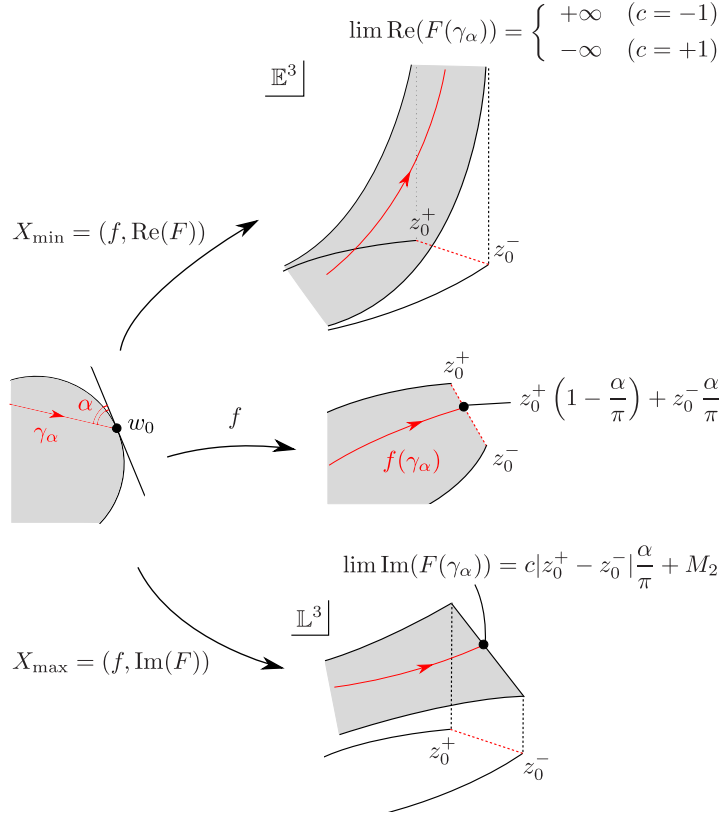} 
    \end{tabular}
\caption{Duality of boundary behavior of minimal and maximal surfaces around a discontinuous point $w_0$ of $f$.}
\label{zu15}
 \end{center}
\end{figure}

\begin{remark}
	Theorem \ref{thm:5} and  Corollary \ref{cor:1} can be extended to a wider class of discontinuous points, by appealing to the estimation method given by Shiga in \cite[Theorem 1]{S}. More precisely, let $\sigma \colon [0,\infty)\to [0,\infty)$ be a monotone increasing continuous function which satisfies $\sigma(0)=0$, and the following three conditions:
	\begin{itemize} \setlength{\itemsep}{1ex}
		\item doubling property: there exists $C>0$ such that $0<s\leq t\leq 2s$ implies $\sigma(s)\leq \sigma(t)\leq C\sigma(s)$,
		\item for each $0<\lambda\leq 1$, there exists $\delta>0$ such that $t^{\lambda}\leq \sigma(t)$ if $0<t<\delta$,
		\item Dini condition: $\displaystyle \int_0^1 \frac{\sigma(t)}{t}dt<+\infty$.
	\end{itemize}
\noindent
Then, for any $\sigma$-regular discontinuous point of $\hat{f}$, the same conclusions as in Theorem \ref{thm:5} and Corollary \ref{cor:1} hold.
\end{remark}

\noindent
{\bf Outline of the proof of Theorem \ref{thm:5}}.
Since only standard and easy calculations are needed to prove Theorem \ref{thm:5}, we give only an outline.

We may assume $0<\lambda<1$. For $w_0=e^{i\theta_0}$, define $V\colon \partial \mathbb{D} \to \mathbb{C}$ by $V(e^{it})=z_0^+-z_0^-$ if $\theta_0<t<\theta_0+\pi$ and $V(e^{it})=0$ otherwise, and let $W=\hat{f}-V$. Then $W$ satisfies the $\lambda$-H\"older condition at $w_0$ in the usual sense. Recall that $f=h+\overline{g}$ can be written as the Poisson integral of $\hat{f}$. Thus we have
\begin{align*}
	h'(w)=\frac{1}{2\pi i}\int_{\partial \mathbb{D}} \frac{\hat{f}(\zeta)}{(\zeta-w)^2}d\zeta
			=\frac{1}{2\pi i}\int_{\partial \mathbb{D}} \frac{V(\zeta)}{(\zeta-w)^2}d\zeta+\frac{1}{2\pi i}\int_{\partial \mathbb{D}} \frac{W(\zeta)}{(\zeta-w)^2}d\zeta,\\[1ex]
	g'(w)=\frac{1}{2\pi i}\int_{\partial \mathbb{D}} \frac{\overline{\hat{f}(\zeta)}}{(\zeta-w)^2}d\zeta
			=\frac{1}{2\pi i}\int_{\partial \mathbb{D}} \frac{\overline{V(\zeta)}}{(\zeta-w)^2}d\zeta+\frac{1}{2\pi i}\int_{\partial \mathbb{D}} \frac{\overline{W(\zeta)}}{(\zeta-w)^2}d\zeta.
\end{align*}
Easy calculations show that
\begin{align*}
	h'(w)=-\frac{z_0^+-z_0^-}{2\pi i} \frac{1}{w-w_0}\left\{1+ O\left(|w-w_0|^{\lambda}\right) \right\},\\[1ex]
	g'(w)=-\frac{\overline{(z_0^+-z_0^-)}}{2\pi i} \frac{1}{w-w_0}\left\{1+ O\left(|w-w_0|^{\lambda}\right) \right\},
\end{align*}
as $w\to w_0$ on each Stolz angle at $w_0$. Therefore, we have
\[
	2i \sqrt{h'g'}=c\frac{|z_0^+-z_0^-|}{\pi} \frac{1}{w-w_0}\left\{1+ O\left(|w-w_0|^{\lambda}\right) \right\},
\]
as $w\to w_0$ on each Stolz angle. This implies the desired conclusion. \hfill $\square$

\subsection{Polygonal case}

In the case where $\Omega$ is a polygonal domain, some intensive studies are found in \cite{BW}, \cite{JS}, \cite{MS}, \cite{W}. In this case, by using the poisson integrals of step functions, we can give solutions of the following two boundary value problems for the minimal surface equation and the maximal surface equation, simultaneously. See also Section \ref{subsec:examples}.

\begin{corollary}\label{cor:polygonal bdr}
Let $\Omega\subset \mathbb{C}$ be a polygonal domain with open segment edges $I_j$ $(j=1,2,\dots, n)$, such that $I_j$ and $I_{j+1}$ have a common vertex $z_j$, where $I_{n+1}:=I_1$. 
Let $\phi\colon \Omega\rightarrow \mathbb{R}$ be a solution of the minimal surface equation, $\psi\colon \Omega\rightarrow \mathbb{R}$ its dual and $f=h+\overline{g}\colon\mathbb{D}\rightarrow \Omega$ the corresponding orientation-preserving univalent harmonic mapping. Then the following statements are equivalent.
\begin{itemize}
\item[(i)] $\phi(z)\to +\infty$ $(z\to I_j)$ or $\phi(z)\to -\infty$ $(z\to I_j)$ for each $j=1,2,\dots,n$.
\item[(ii)] $\psi$ tamely degenerates to a lightlike line segment on each $I_j$ for $j=1,2,\dots,n$.
\item[(iii)] The boundary function $\hat{f}$ is a step function on $\partial \mathbb{D}$ taking values in $\{z_j\mid j=1,2,\dotsc,n\}$.
\end{itemize}
\end{corollary}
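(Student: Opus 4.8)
\textbf{Proof proposal for Corollary \ref{cor:polygonal bdr}.}
The plan is to deduce the equivalence from the local statement of Theorem \ref{thm:1} together with the classical description of univalent harmonic self-maps of polygonal domains. The equivalence (i) $\Leftrightarrow$ (ii) is essentially a segment-by-segment application of Theorem \ref{thm:1}: for each edge $I_j$ we apply the theorem (and the past-directed variant obtained by replacing $\phi$ with $-\phi$, which corresponds to replacing $\psi$ with $-\psi$ via \eqref{eq:duality}) to conclude that ``$\phi\to+\infty$ or $\phi\to-\infty$ on $I_j$'' is equivalent to ``$\psi$ tamely degenerates to a future- or past-directed lightlike segment on $I_j$''. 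Ranging over $j=1,\dots,n$ gives (i) $\Leftrightarrow$ (ii). One small point to check here is that Theorem \ref{thm:1} is stated for a bounded simply connected Jordan domain whose boundary contains one distinguished segment; a polygonal domain is such a domain, and the conclusion for $I_j$ does not depend on what happens on the other edges, so the per-edge application is legitimate.

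For the link with (iii), I would argue (i)/(ii) $\Rightarrow$ (iii) first. Assuming (i), apply the ``moreover'' part of Theorem \ref{thm:1} to each edge $I_j$: there is a jump point $w_j\in E\subset\partial\mathbb{D}$ of $\hat f$ with $I_j\subset C(f,w_j)$. Since $\hat f$ takes boundary values in $\partial\Omega$ almost everywhere (Lemma \ref{lemma:HS}) and since $C(f,w_j)$ is by part (iii) of that lemma the closed segment joining the one-sided limits $\hat f(w_j^{\pm})$, and since $I_j$ is a full open edge of the polygon contained in this segment, the one-sided limits must be the two endpoints of $I_j$, i.e. $\{\hat f(w_j^-),\hat f(w_j^+)\}=\{z_{j-1},z_j\}$. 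Between consecutive jump points $w_j$ the map $\hat f$ is continuous with values in $\partial\Omega$; I claim it is constant there. Indeed, on the complement of $E$ the image $\hat f(\partial\mathbb{D}\setminus E)$ is contained in $\partial\Omega$, and by the proof of Theorem \ref{thm:1} the set $C(f^{-1},I_j)$ has Lebesgue measure zero for each $j$, so $\hat f$ cannot spend positive measure on the (open) edges; being continuous on each arc between jump points and avoiding the open edges except on a null set, it must be locally constant, hence constant, on each such arc, with value a vertex $z_k$. This forces the $w_j$ to exhaust $E$ and shows $\hat f$ is a step function valued in $\{z_1,\dots,z_n\}$.

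For the converse (iii) $\Rightarrow$ (ii), suppose $\hat f$ is such a step function. Then each jump point $w_0\in E$ is trivially $\lambda$-Hölder regular (indeed $1$-Hölder regular, since $W=\hat f - V$ is locally constant, hence Lipschitz, near $w_0$ in the notation of the outline of Theorem \ref{thm:5}), so Corollary \ref{cor:1} applies: $\psi$ tamely degenerates to a future- or past-directed lightlike segment on the open segment $C(f,w_0)$. Because $f$ is a homeomorphism onto $\Omega$ and $\hat f$ jumps only between vertices, the cluster segments $C(f,w_0)$ over $w_0\in E$ are precisely the open edges $I_j$ (every boundary point of $\Omega$ that is a vertex is the common endpoint of two edges, and the arcs between jump points map to single vertices), so $\psi$ tamely degenerates to a lightlike segment on every $I_j$, which is (ii); then (i) follows from (i) $\Leftrightarrow$ (ii) already proved.

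The main obstacle I anticipate is the bookkeeping in (i)/(ii) $\Rightarrow$ (iii): proving rigorously that $\hat f$ is \emph{constant} on each arc between consecutive jump points, i.e. that it cannot meander continuously along an edge of $\partial\Omega$ or visit several vertices. The key inputs are the measure-zero statement $|C(f^{-1},I_j)|=0$ established inside the proof of Theorem \ref{thm:1} (which handles the edges) together with the injectivity of $f$ and the structure theorem Lemma \ref{lemma:HS} (which, combined, pin the values down to the vertices); making this argument airtight — in particular ruling out a continuous-but-nonconstant boundary arc — is where care is needed, whereas the reductions to Theorem \ref{thm:1} and Corollary \ref{cor:1} are routine.
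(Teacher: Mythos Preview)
Your proposal is correct and shares the paper's overall architecture---(i)$\Leftrightarrow$(ii) via Theorem~\ref{thm:1} edge by edge, and (iii)$\Rightarrow$(i) via $1$-H\"older regularity of step-function jumps together with Corollary~\ref{cor:1}---but the argument for (i)/(ii)$\Rightarrow$(iii) is genuinely different. You argue measure-theoretically: extracting $|C(f^{-1},I_j)|=0$ from the proof of Theorem~\ref{thm:1}, you conclude that the continuous function $\hat f$ on $\partial\mathbb{D}\setminus E$ lands in the finite vertex set almost everywhere, hence (by continuity and connectedness) is constant on each complementary arc. The paper instead gives a direct topological construction: for adjacent jumps $w_j\neq w_{j+1}$ it chooses two arcs $\gamma_1,\gamma_2$ in $\mathbb{D}$ terminating at $w_j,w_{j+1}$ whose $f$-images both converge to the common vertex $z_j$; the Jordan curve $f(\gamma_1)\cup f(\gamma_2)\cup\{z_j\}$ bounds a region $D'\subset\Omega$ with $\partial D'\cap\partial\Omega=\{z_j\}$, and since every $C(f,w_k)$ contains the full edge $I_k$, no other $w_k$ can lie on the boundary arc $J_j=\partial(f^{-1}D')\cap\partial\mathbb{D}$, forcing $\hat f|_{J_j}\equiv z_j$. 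Your route is slick and recycles work already done inside Theorem~\ref{thm:1}; the paper's route is self-contained (it does not reach back into that proof) and, as a by-product, isolates the observation that $w_j=w_{j+1}$ is possible only when the interior angle at $z_j$ equals~$\pi$ (cf.\ Remark~\ref{remark:interior_angle}), which is used later. One small caution on your side: the identification $\{\hat f(w_j^-),\hat f(w_j^+)\}=\{z_{j-1},z_j\}$ presumes $C(f,w_j)=\overline{I_j}$ rather than merely $C(f,w_j)\supset I_j$; when adjacent edges are collinear a single jump can swallow two edges, so your ``the $w_j$ exhaust $E$'' bookkeeping is cleaner if phrased in terms of the arcs of $\partial\mathbb{D}\setminus E$ (where your measure argument already lives) rather than via the labelled points $w_j$.
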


\noindent
The proof of the equivalence (i) and (iii) can be found in \cite[Theorem 1]{BW} and \cite[Theorem 2]{W}. The following proof is almost the same way, however, we give a detailed proof since it gives an important observation for the later sections.

\begin{proof}
The equivalence (i) and (ii) follows from Theorem \ref{thm:1}. If we assume (ii), then there exists a discontinuous points $w_j$ $(j=1,2,\dotsc,n)$ of the boundary function $\hat{f}$ such that $I_j\subset C(f,w_j)$. When $w_j\neq w_{j+1}$, we can take an open arc $J_j\subset \partial{\mathbb{D}}$ joining $w_j$ and $w_{j+1}$ which does not contain the other $w_k$ $(k\neq j, j+1)$. Indeed, we can take two curves $\gamma_i\colon [0,1)\to \mathbb{D}$  $(i=1,2)$ which do not have intersections except the common starting point $\gamma_1(0)=\gamma_2(0)$ and satisfy
\[
\lim_{t\to1}{\gamma_1(t)}=w_j,\quad \lim_{t\to1}{\gamma_2(t)}=w_{j+1}\quad \text{and}\quad  \lim_{t\to1}{f(\gamma_1(t))}= \lim_{t\to1}{f(\gamma_2(t))}=z_j.
\]
Since $f$ is a homeomorphism from $\mathbb{D}$ to $\Omega$, the curve $\Gamma=f(\gamma_1)\cup f(\gamma_2)\cup\{z_j\}$ is a Jordan closed curve. Considering the bounded domain $D'\subset \Omega$ enclosed by $\Gamma$ and its preimage $D=f^{-1}{(D')}$, one can see that $J_j=\partial{D}\cap \partial{\mathbb{D}}$ does not contain any other $w_k$ except $w_j$ and $w_{j+1}$ since each $C(f,w_k)$ contains a line segment $I_k$ but $\partial{D'}\cap \partial{\Omega}=\{z_j\}$. 
This argument also shows that $\hat{f}|_{J_j}\equiv z_j$, which proves (iii). 

Finally, we assume the third condition (iii). Then it can be easily shown that $I_j\subset C(f,w_j)$ for some discontinuous point $w_j$ of $\hat{f}$ for each $j=1,\ldots,n$, by using the fact that $\partial\Omega=\bigcup _{w_k \in E}C(f,w_k)$ and $I_j\cap \{z_1,\ldots z_n\}=\emptyset$. Since $\hat{f}$ is a step function, each discontinuous point is $\lambda$-H\"older regular for $\lambda=1$. Thus Corollary \ref{cor:1} shows (i).
\end{proof}

\begin{remark}\label{remark:interior_angle}
In the proof of Corollary \ref{cor:polygonal bdr}, discontinuous points $w_j$ and $w_{j+1}$ may satisfy $w_j=w_{j+1}$. It should be emphasized that this can occur only when the interior angle of $I_j$ and $I_{j+1}$ is equal to $\pi$, since the cluster point set $C(f,w_j)$ is a line segment. Later, we will see that the condition for the interior angle strongly affects the boundary behavior of minimal and maximal surfaces. See Section \ref{subsec:reflection}.
\end{remark}

Next, we recall the Jenkins-Serrin theorem in \cite{JS}. 
Let $\Omega\subset \mathbb{R}^2$ be a bounded simply connected Jordan domain whose boundary consists of a finite number of open line segments $A_1,\ldots,A_k,B_1,\ldots,B_l$ and a finite number of open convex arcs $C_1,\ldots,C_m$ together with their endpoints. For each of families $\{A_j\}, \{B_j\}$ and $\{C_j\}$, assume that no two of the elements meet to form a convex corner. Further, for a polygonal domain $P\subset \Omega$ whose vertices are in the set of the endpoints, let $\alpha=\alpha_P$ and $\beta=\beta_P$ denote respectively, the total length of $A_j$ such that $A_j\subset\partial P$ and the total length of $B_j$ such that $B_j\subset\partial P$, and let $\gamma=\gamma_P$ be the perimeter of $P$. Under this situation, we consider the following boundary value problem for the minimal surface equation$:$ for a prescribed piecewise continuous data $\widehat{\phi}_j:C_j\to \mathbb{R}$ on $C_j,\ j=1,\ldots,m$,
\begin{itemize}
	\item $\phi$ tends to plus infinity on $A_j,\ j=1,\ldots,k$,
	\item $\phi$ tends to minus infinity on $B_j,\ j=1,\ldots,l$,
	\item $\phi=\widehat{\phi}_j$ on $C_j,\ j=1,\ldots,m$.
\end{itemize}
\noindent
Then Jenkins and Serrin obtained the following theorem in \cite{JS}.
\begin{fact}\label{fact:JS}
If $\{C_j\}$ is non-empty, then the above boundary value problem for the minimal surface equation is solvable for arbitrary assigned data, if and only if, 
\begin{equation}\label{eq:JS_condition1}
		2\alpha<\gamma \ \ \ \text{and}\ \ \  2\beta <\gamma
\end{equation}
hold for each polygonal domain taken as above. The solution is unique if it exists.

If $\{C_j\}$ is empty, then there exists a solution, if and only if,
\begin{equation}\label{eq:JS_condition2}
		\alpha_{\Omega}=\beta_{\Omega}
\end{equation}
holds and $(\ref{eq:JS_condition1})$ hold for each polygonal proper subdomain taken as above. The solution is unique up to an additive constant if it exists.
\end{fact}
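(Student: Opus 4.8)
The plan is to separate necessity and sufficiency, with the divergence-free vector field $T\phi:=\nabla\phi/\sqrt{1+|\nabla\phi|^2}$ and its flux as the central tool. The elementary input is that for any rectifiable curve $\Gamma\subset\overline{\Omega}$ and any solution $\phi$ one has $\bigl|\int_\Gamma T\phi\cdot\nu\,ds\bigr|\le|\Gamma|$, with strict inequality unless $\Gamma$ is a straight segment along which $|\nabla\phi|$ diverges in the appropriate one-sided sense. I would first record the boundary flux values: along an edge $A_j$ where $\phi\to+\infty$ the outward flux of $T\phi$ equals $+|A_j|$ (the conormal aligns with $\nu$), along $B_j$ it equals $-|B_j|$, and along a convex arc $C_j$ carrying finite continuous data the flux is, in absolute value, strictly less than $|C_j|$; these are exactly the Jenkins–Serrin boundary estimates, and the gradient control near an edge of divergence is their Lemma~1, already cited in the excerpt.

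For necessity, fix a polygonal subdomain $P\subset\Omega$ with vertices among the prescribed endpoints and integrate $\operatorname{div}(T\phi)=0$ over $P$. The boundary $\partial P$ decomposes into portions on the $A_j$'s (total contribution $+\alpha_P$), on the $B_j$'s (contribution $-\beta_P$), on the $C_j$'s, and interior diagonals; the last two families together have length $\gamma_P-\alpha_P-\beta_P$ and contribute strictly less than that in absolute value whenever at least one diagonal or convex arc is present. Setting the total flux to zero and estimating both ways gives $\alpha_P<\gamma_P-\alpha_P$ and $\beta_P<\gamma_P-\beta_P$, i.e. $2\alpha_P<\gamma_P$ and $2\beta_P<\gamma_P$. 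When $\{C_j\}$ is empty one takes $P=\Omega$: there are no diagonals and no convex arcs, so the flux identity collapses to the equality $\alpha_\Omega=\beta_\Omega$, while the strict inequalities persist for every proper polygonal subdomain.

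For sufficiency I would construct the solution by exhaustion. Replace ``$+\infty$ on $A_j$'' by the constant $n$, ``$-\infty$ on $B_j$'' by $-n$, keep $\widehat\phi_j$ on $C_j$, and solve the resulting bounded Dirichlet problem by the classical theory for the minimal surface equation over a domain with convex arcs (Perron's method together with barriers at corners and along the $C_j$'s); this yields $\phi_n$, monotone in $n$ on the parts where the data increases. Two things must follow. First, interior compactness: $\{\phi_n\}$ is locally bounded, hence by the interior gradient estimate it subconverges in $C^{\infty}_{\mathrm{loc}}$ to a solution $\phi$. Second, boundary behaviour: $\phi\to+\infty$ on $A_j$, $\phi\to-\infty$ on $B_j$, and $\phi=\widehat\phi_j$ on $C_j$, established by explicit sub/supersolution barriers (linear functions, or pieces of catenoids and helicoids, normalized near each edge). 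In the case $\{C_j\}=\emptyset$ one normalizes $\phi_n$ by subtracting its value at a fixed interior point, and the hypothesis $\alpha_\Omega=\beta_\Omega$ is precisely what keeps the normalized family from sliding off to $\pm\infty$.

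Uniqueness then follows by comparison: for two solutions $\phi_1,\phi_2$ set $v=\phi_1-\phi_2$; near $A_j,B_j$ the leading divergence cancels so $v$ is bounded there, while $v\to0$ on the $C_j$'s, and integrating $\operatorname{div}(T\phi_1-T\phi_2)=0$ against $v$ over the superlevel sets $\{v>\varepsilon\}$ together with the monotonicity $(T\phi_1-T\phi_2)\cdot\nabla v\ge0$ (strict convexity of the area integrand) forces $\nabla v\equiv0$, hence $v\equiv\mathrm{const}$; the $C_j$ data pins this constant to $0$ when $\{C_j\}\neq\emptyset$, and otherwise leaves exactly the stated ambiguity. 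The main obstacle is step one of sufficiency: converting the purely combinatorial inequalities $2\alpha<\gamma$, $2\beta<\gamma$ into an a priori interior bound on the approximating family. This is the heart of Jenkins–Serrin's argument — if $\phi_n\to+\infty$ on an interior region, one cuts out a subdomain $P$ bounded by level curves $\{\phi_n=\mathrm{const}\}$ and by edges, applies the flux identity on $P$, and uses that a level curve carries flux strictly less than its length while an $A_j$-edge carries exactly its length, so that in the limit one contradicts $2\alpha_P<\gamma_P$ (respectively $2\beta_P<\gamma_P$). A secondary difficulty is the barrier construction at the convex arcs and at corners where an $A_j$ or $B_j$ meets a $C_j$.
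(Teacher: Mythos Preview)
The paper does not prove this statement at all: it is recorded as a \emph{Fact} and attributed to Jenkins--Serrin \cite{JS}, to be used as a black box (for instance in the proof of Proposition~\ref{prop:removablity} and in deriving Corollary~\ref{cor:JS}). There is thus nothing in the paper to compare your proposal against.

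That said, what you have written is a faithful outline of the original Jenkins--Serrin argument itself: flux of $T\phi=\nabla\phi/\sqrt{1+|\nabla\phi|^2}$ for necessity, monotone exhaustion by truncated-data Dirichlet problems for sufficiency, and a convexity/comparison argument for uniqueness. One small imprecision worth flagging is in your uniqueness sketch: the phrase ``near $A_j,B_j$ the leading divergence cancels so $v$ is bounded there'' is not quite the mechanism. What one actually uses is that along $A_j$ both $T\phi_1\cdot\nu$ and $T\phi_2\cdot\nu$ equal $+1$ (and $-1$ along $B_j$), so the boundary integral of $(T\phi_1-T\phi_2)\cdot\nu$ over those edges vanishes regardless of whether $v=\phi_1-\phi_2$ stays bounded; the integration by parts is then carried out over $\{|v|>\varepsilon\}$ intersected with an interior exhaustion, and the edge contributions disappear in the limit for flux reasons rather than because $v$ is bounded. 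Apart from this, you have correctly identified the genuine crux (turning $2\alpha<\gamma$, $2\beta<\gamma$ into an interior a~priori bound on the approximants via the level-curve flux argument), and your sketch would serve as a reasonable roadmap through \cite{JS}.
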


Here, we restrict ourselves to the case where $\{C_j\}$ is empty. By Corollary \ref{cor:polygonal bdr}, the existence conditions \eqref{eq:JS_condition1} and \eqref{eq:JS_condition2} for minimal graphs are translated to the conditions for maximal surfaces, as follows: 
 Suppose that $\{C_j\}$ is empty, and there exists a solution $\phi$ of the above infinite boundary value problem. Let $\psi$ be the dual solution. Then, the boundary of $\Sigma_{\max}={\rm graph}(\psi)$ consists of future-directed lightlike line segments on each $A_j$ and past-directed lightlike line segments on each $B_j$, by Corollary \ref{cor:polygonal bdr}.
Therefore, the latter condition $\alpha_{\Omega}=\beta_{\Omega}$ just means that $\Gamma=\partial \Sigma_{\max}$ is a closed curve.
On the other hand, the former condition \eqref{eq:JS_condition1} corresponds to the statement that each line segment $l$ connecting two vertices of $\Gamma$ is spacelike whenever $\Pi(l) \subset \Omega$, where $\Pi$ is the projection from $\mathbb{L}^3$ to the $xy$-plane. 
In fact, if we consider a polygon $P\subset \Omega$ which is one of the connected components of $\Omega\setminus \Pi(l)$, then the former condition $2\alpha<\gamma$ and $2\beta<\gamma$ is equivalent to $|\alpha-\beta|<|\Pi(l)|=\gamma-\alpha-\beta$. This means that $l\subset \mathbb{L}^3$ is spacelike since $|\alpha-\beta|$ is exactly the difference between the heights of endpoints of $l$.

Therefore, in conjunction with Corollary \ref{cor:polygonal bdr} one can solve the following boundary value problem for maximal surfaces as a counter part of Jenkins-Serrin's result.

\begin{corollary}\label{cor:JS}
 Let $\Omega\subset \mathbb{R}^2$ be a polygonal domain whose boundary consists of $A_j, B_j\subset \partial{\Omega}$ as in Fact $\ref{fact:JS}$. Let $\Gamma \subset \mathbb{L}^3$ be a polygonal curve which consists of future-directed lightlike line segments on $A_j$ and past-directed lightlike line segments on $B_j$. Then, there exists a solution of the maximal surface equation over $\Omega$ which tamely degenerates to each edge of $\Gamma$ if and only if $\Gamma$ is a closed curve and each line segment $l$ connecting vertices of $\Gamma$ is spacelike whenever the projection of $l$ to the $xy$-plane lies on  $\Omega$. Moreover, such solution is unique and can be written by the Poisson integral of some step function taking values on the vertices of $\Omega$.
\end{corollary}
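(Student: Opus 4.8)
The plan is to transport the Jenkins--Serrin theorem (Fact \ref{fact:JS}, case $\{C_j\}$ empty) across the duality, using Corollary \ref{cor:polygonal bdr} as the dictionary between the minimal side and the maximal side. First I would set up the correspondence: given a closed polygonal curve $\Gamma\subset\mathbb{L}^3$ consisting of future-directed lightlike segments over the $A_j$ and past-directed lightlike segments over the $B_j$, I would observe that prescribing $\Gamma$ is the same as prescribing, on the minimal side, the infinite boundary data ``$\phi\to+\infty$ on each $A_j$, $\phi\to-\infty$ on each $B_j$'' together with (by the explicit primitive in Corollary \ref{cor:1}, i.e.\ equation \eqref{eq:Ipart} and the boundary behavior of $f$) the precise heights of the vertices. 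Conversely, any dual $\psi$ of a Jenkins--Serrin solution $\phi$ tamely degenerates to such a $\Gamma$ by Corollary \ref{cor:polygonal bdr}, and the additive constant in Fact \ref{fact:JS} for $\phi$ corresponds under \eqref{eq:duality} to a genuine change of $\psi$ (the $d\psi$ relation pins down $\psi$ only up to a constant as well, and fixing the closed curve $\Gamma$ in $\mathbb{L}^3$ fixes that constant).

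Next I would translate the two existence conditions, exactly as in the paragraph preceding the corollary. Condition \eqref{eq:JS_condition2}, $\alpha_\Omega=\beta_\Omega$, says the total length of the future-directed edges equals that of the past-directed edges; since a lightlike segment over a planar segment of length $\ell$ rises (resp.\ falls) by exactly $\ell$ in the $t$-coordinate, this is precisely the statement that $\Gamma$ closes up in $\mathbb{L}^3$, i.e.\ that $\Gamma$ is a closed curve. For the subdomain conditions \eqref{eq:JS_condition1}: given a line segment $l\subset\mathbb{L}^3$ joining two vertices of $\Gamma$ whose projection $\Pi(l)$ lies in $\Omega$, $\Pi(l)$ splits $\Omega$ into polygonal subdomains; taking $P$ to be one of them, $2\alpha_P<\gamma_P$ and $2\beta_P<\gamma_P$ together are equivalent to $|\alpha_P-\beta_P|<\gamma_P-\alpha_P-\beta_P=|\Pi(l)|$. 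Here $|\alpha_P-\beta_P|$ is the height difference between the two endpoints of $l$ (again because each lightlike edge contributes $\pm$ its planar length to the height), and $|\Pi(l)|$ is the planar length of $l$, so the inequality says exactly that $l$ is spacelike. One must also check that the ``no convex corner'' hypothesis in Fact \ref{fact:JS} is automatic here, since adjacent lightlike edges of $\Gamma$ with the same time-orientation correspond to $A_j,A_{j+1}$ (or $B_j,B_{j+1}$) on which $\phi$ tends to the same infinity, and such a configuration forces the interior angle to be $\geq\pi$ by the Jenkins--Serrin normalization (cf.\ Remark \ref{remark:interior_angle}); I would spell this out, since the statement of the corollary does not re-impose it.

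With the dictionary in place, the proof is a two-way implication: if $\Gamma$ is closed and every such chord $l$ is spacelike, then \eqref{eq:JS_condition1} and \eqref{eq:JS_condition2} hold for every admissible polygonal $P$, so by Fact \ref{fact:JS} there is a minimal solution $\phi$, unique up to a constant, with the prescribed infinite data; its dual $\psi$ (with the constant fixed so that $\psi|_{\partial\Omega}$ traces $\Gamma$) is, by Corollary \ref{cor:polygonal bdr} and Corollary \ref{cor:1}, a solution of the maximal surface equation tamely degenerating to each edge of $\Gamma$, and uniqueness of $\psi$ follows from that of $\phi$; the Poisson-integral-of-a-step-function representation is then immediate from Corollary \ref{cor:polygonal bdr}(iii). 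For the reverse direction, a solution $\psi$ degenerating to $\Gamma$ has, by Corollary \ref{cor:polygonal bdr}, a dual $\phi$ satisfying the infinite boundary value problem, whence \eqref{eq:JS_condition1}--\eqref{eq:JS_condition2} hold by the ``only if'' part of Fact \ref{fact:JS}, and these translate back to ``$\Gamma$ closed'' and ``every chord spacelike.'' The main obstacle I anticipate is purely bookkeeping rather than conceptual: namely, verifying that the additive-constant ambiguity on the minimal side matches the freedom of vertically translating $\Gamma$ in $\mathbb{L}^3$, and confirming the boundary-value/height computation \eqref{eq:Ipart} rigorously enough that ``$\psi|_{I_0}$ parametrizes the lightlike segment'' gives the correct endpoint heights — i.e.\ that the combinatorial identification of $\alpha_P-\beta_P$ with a height difference is exact and not merely up to sign or a lower-order term.
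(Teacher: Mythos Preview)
Your proposal is correct and follows essentially the same route as the paper: the paper's argument is precisely the paragraph preceding the corollary, which translates the Jenkins--Serrin conditions \eqref{eq:JS_condition1}--\eqref{eq:JS_condition2} across the duality via Corollary \ref{cor:polygonal bdr}, interpreting $\alpha_\Omega=\beta_\Omega$ as ``$\Gamma$ closes up'' and $|\alpha_P-\beta_P|<\gamma_P-\alpha_P-\beta_P=|\Pi(l)|$ as ``$l$ is spacelike.'' Your additional care about the additive-constant bookkeeping and the no-convex-corner hypothesis (the latter is in fact inherited from the phrase ``as in Fact~\ref{fact:JS}'') goes slightly beyond what the paper spells out, but the core argument is the same.
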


Here, we remark that Bartnik and Simon \cite{BS} discussed the existence and uniqueness of solutions of boundary value problems with prescribed boundary values and mean curvatures for weakly spacelike hypersurfaces in the Lorentz-Minkowski space. Their results also include the existence and uniqueness of solutions of the area maximizing problems for weakly spacelike graphs in the sense of \cite[(1.3)]{BS}.

\section{Extension via reflection principle and symmetry} \label{sec:4} 
In this section, we investigate more details of boundary behavior of minimal and maximal surfaces discussed in the previous section, and also study their conjugate surfaces and symmetry relations.

\subsection{Interior angle of boundary edges and reflection principle} \label{subsec:reflection}
By the reflection principle for harmonic mappings, we extend surfaces across vertical lines on minimal surfaces and shrinking singularities on maximal surfaces.

As in the previous sections, we let $\Omega$ be a bounded simply connected Jordan domain, $\phi$ a solution of the minimal surface equation over $\Omega$, $\psi$ the dual of $\phi$, and $f\colon \mathbb{D}\to \Omega$ the corresponding orientation-preserving univalent harmonic mapping. We consider the case where $\partial \Omega$ contains adjacent segments $I_1$ and $I_2$ having a common endpoint $z_0$ with interior angle $\alpha$, and $\phi$ tends to plus or minus infinity on each of $I_1$ and $I_2$. Then, Theorem \ref{thm:1} shows that there exist discontinuous points $w_1, w_2 \in \partial \mathbb{D}$ of $\hat{f}$ such that $I_j \subset C(f,w_j),\ j=1,2$. As mentioned in Remark \ref{remark:interior_angle}, it may occur that $w_1=w_2$ only when $\alpha=\pi$.

\begin{remark}
	We emphasize that the only reason for assuming that $\Omega$ is a bounded simply connected Jordan domain is to apply Theorem \ref{thm:1} or Hengartner-Schober's result (Lemma \ref{lemma:HS}) to $f$. Therefore, almost every discussion in this section can be applied to the case where only the simply connectedness is assumed for $\Omega$, by restricting the arguments to an appropriate subdomain. However, the simply connectedness is needed for the existence of the dual.
\end{remark}

We first see relations between the interior angle $\alpha$ and the signs of $\phi$ over $I_1$ and $I_2$. So far, the sign changing of $\phi$ on boundary edges were discussed by Bshouty and Weitsman in \cite{BW}, and their function theoretical approach is based on the works in \cite{BH}, \cite{HS2}.

\begin{proposition} \label{prop:signAlter}
	Assume $\phi$ tends to plus or minus infinity on each of adjacent segments $I_1$ and $I_2$ having a common endpoint $z_0$ with interior angle $\alpha<\pi$. Then the signs of $\phi$ on $I_1$ and $I_2$ are different.
\end{proposition}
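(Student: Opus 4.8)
The plan is to translate the sign-changing statement into a statement about the discontinuous points of $\hat f$ and the cluster sets $C(f,w_1)$, $C(f,w_2)$, and then to argue by a degree/winding argument around $z_0$. First I would invoke Theorem~\ref{thm:1}: since $\phi$ tends to $\pm\infty$ on each of $I_1$ and $I_2$, there are discontinuous points $w_1,w_2\in\partial\mathbb{D}$ of $\hat f$ with $I_j\subset C(f,w_j)$, and since $\alpha<\pi$ the cluster set $C(f,w_j)$ is a line segment that cannot contain a convex corner, so $w_1\neq w_2$ and $z_0=\hat f(w_1^\pm)=\hat f(w_2^\mp)$ is a common one-sided endpoint of both cluster segments. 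Thus, on the arc $J\subset\partial\mathbb{D}$ running from $w_1$ to $w_2$ on the side where both approach $z_0$, the boundary function $\hat f$ has one-sided limits tending to $z_0$ from both ends, and $f(J)$ together with $z_0$ bounds a small Jordan subdomain of $\Omega$ near the corner.

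Next I would bring in Corollary~\ref{cor:1}/Theorem~\ref{thm:5} (the $\lambda=1$ H\"older-regular case applies because near a corner the boundary is locally linear, hence $\hat f$ is H\"older near $w_1$ and $w_2$): the asymptotic expansion \eqref{eq:Rpart} says that along Stolz angles at $w_j$,
\[
	\phi\circ f(w)=c_j\,\frac{|z_0^+-z_0^-|_j}{\pi}\,\log|w-w_j|+O(1),
\]
so the sign of $\phi$ near $I_j$ is exactly the sign $-c_j$ of the coefficient (it tends to $+\infty$ iff $c_j<0$). Meanwhile \eqref{eq:F} shows $F$ is the antiderivative of $2i\sqrt{h'g'}$, and near $w_j$ one has $h'(w)\sim -\frac{z_0^+-z_0^-}{2\pi i}\frac{1}{w-w_j}$ (and similarly for $g'$), so $c_j$ is determined by the \emph{globally consistent} holomorphic branch of $\sqrt{h'g'}$ on $\mathbb{D}$. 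The key point is then: because $\sqrt{h'g'}$ is single-valued and holomorphic on all of $\mathbb{D}$, and because of the geometry of how $f$ maps the arc $J$ to the two segments meeting at $z_0$ with interior angle $\alpha<\pi$, the branch of $\sqrt{h'g'}$ forces $c_1$ and $c_2$ to have \emph{opposite} signs. Concretely, I would compute the change in $\arg\sqrt{h'g'}$ as $w$ traverses a small arc near $\partial\mathbb{D}$ from a Stolz sector at $w_1$ to one at $w_2$: the product $h'g'$ has a simple pole at each $w_j$ (with residues built from $z_0^+-z_0^-$ on each side), and the condition $\alpha<\pi$ controls the total argument increment of $h'g'$ along $J$ to be less than a full period, so the square root cannot return to the same sign — it must flip. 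This is the step that makes the interior angle hypothesis $\alpha<\pi$ essential and is where the case $\alpha=\pi$ (equal signs allowed, $w_1=w_2$ possible) genuinely differs.

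I expect the main obstacle to be making the branch-tracking argument rigorous: one must relate the local pole data of $h'g'$ at $w_1$ and $w_2$ to a global statement about the winding of $h'g'$ along the separating arc $J$, and control the $O(|w-w_j|^\lambda)$ error terms uniformly on Stolz sectors so that the argument computation is not spoiled. A clean way to do this is to use the explicit Poisson/Schwarz-integral representation of $h'$ and $g'$ from the outline of Theorem~\ref{thm:5}: split $\hat f$ into the two ``jump'' pieces $V_1,V_2$ at $w_1,w_2$ plus a remainder that is $C^\lambda$ near both points, so $h'(w)g'(w)$ is, up to a $C^\lambda$ factor, a rational function with exactly two simple poles on $\partial\mathbb{D}$; its argument along the inner arc $J$ is then an elementary computation, and the angle condition $\alpha<\pi$ (which bounds $|\arg(z_0-\hat f(w_1^\pm))-\arg(z_0-\hat f(w_2^\mp))|$) translates directly into the sign flip of $\sqrt{h'g'}$, hence of $c_1$ versus $c_2$, hence of the signs of $\phi$ on $I_1$ and $I_2$. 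Alternatively, and perhaps more in the spirit of the surface geometry, one could argue via the normal vector \eqref{eq:n_min}: the dilatation $\omega=g'/h'$ satisfies $|\omega|\to 1$ along both $I_1$ and $I_2$ (this is how the surface degenerates), and the limiting values of $\sqrt\omega$ on the two segments, which encode the limiting Gauss map directions, are forced to differ by a half-turn precisely when $\alpha<\pi$ — again yielding opposite signs. I would write up the first (Poisson-integral) version as the main line and mention the geometric interpretation as a remark.
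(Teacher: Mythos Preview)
Your route is very different from the paper's, and it has real gaps. The paper's proof is five lines of spacetime geometry via the duality: assuming for contradiction that $\phi\to+\infty$ on both $I_1$ and $I_2$, Theorem~\ref{thm:1} makes the dual $\psi$ tamely degenerate to \emph{future}-directed lightlike segments on both. Since $\alpha<\pi$, one can pick $z_1\in I_1$, $z_2\in I_2$ with the chord $[z_1,z_2]\subset\Omega$; the strict triangle inequality then gives $|z_2-z_1|<|z_2-z_0|+|z_0-z_1|=|\psi(z_2)-\psi(z_1)|$, contradicting the fact that $|\nabla\psi|<1$ makes $\psi$ $1$-Lipschitz on the chord. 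No H\"older regularity, no Poisson integrals, no branch tracking.

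There are two concrete problems with your plan. First, the claim that $\hat f$ is $\lambda$-H\"older at $w_1,w_2$ is unsupported: linearity of $\partial\Omega$ near $z_0$ says nothing about the \emph{rate} at which $\hat f$ approaches its one-sided limits. You can indeed show (as in the proof of Corollary~\ref{cor:polygonal bdr}) that $\hat f\equiv z_0$ on the arc $J$ between $w_1$ and $w_2$, but on the far side of each $w_j$ the boundary of $\Omega$ may be an arbitrary Jordan arc beyond $I_j$, and you have no control over $\hat f$ there; so Theorem~\ref{thm:5} and Corollary~\ref{cor:1} are simply unavailable in this generality. Second, even granting the asymptotics, the branch-tracking step is only a heuristic. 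Near each $w_j$ the leading behaviour is $h'g'\sim -|\Delta_j|^2/(2\pi)^2(w-w_j)^{-2}$, which carries \emph{no} information about the interior angle (the direction $\arg\Delta_j$ cancels in $\Delta_j\overline{\Delta_j}$); so the assertion that ``$\alpha<\pi$ bounds the argument increment of $h'g'$ along $J$'' has no computation behind it, and the angle would have to enter through the global behaviour of $h'g'$ on $J$, about which you have said nothing. The Gauss-map variant has the same defect: you would need to control $\sqrt{\omega}$ along $J$, where $|\omega|$ is not known to stay near $1$. Given the one-paragraph causal argument available, I would drop this line entirely.
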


This proposition is already proved in \cite{JS} and \cite{W} in different ways; the former used the {\it straight line lemma} in \cite[Section 4]{JS}, and the latter used the generalized maximum principle, respectively. By applying Theorem \ref{thm:1}, we have an another simple spacetime geometrical proof as follows:

\begin{proof}
	To obtain a contradiction, we assume $\phi$ tends to plus infinity on $I_1$ and $I_2$. Then Theorem \ref{thm:1} implies that its dual $\psi$ tamely degenerates to future-directed lightlike line segments on $I_1$ and $I_2$. We can find $z_1\in I_1$ and $z_2 \in I_2$ such that the straight line segment $l$ joining $z_1$ and $z_2$ lies in $\Omega$ since $\alpha<\pi$. The triangle inequality shows
	\begin{equation*}
		|z_2-z_1|<|z_2-z_0|+|z_0-z_1|=|\psi(z_2)-\psi(z_1)|. \label{eq:nonLips}
	\end{equation*} 
However, $\psi$ is $1$-Lipschitz on any convex set of $\Omega$, in particular on $l$, since $|\nabla \psi|<1$. We have a contradiction.
\end{proof}

When $\alpha>\pi$, the sign of $\phi$ may change in general. Such an example was constructed in \cite{MS} (see the left of Figure \ref{fig:branch}). In the case where $\alpha=\pi$ and the signs are the same, the following ``{\it removable singularity theorem}'' does hold. 

\begin{proposition} \label{prop:removablity}
	Assume $\phi$ tends to plus infinity on adjacent segments $I_1$ and $I_2$ having a common endpoint $z_0$ with interior angle $\alpha=\pi$. Then $\phi$ tends to plus infinity on $I=I_1\cup \{z_0\} \cup I_2$.
\end{proposition}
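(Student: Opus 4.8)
\textbf{Proof proposal for Proposition \ref{prop:removablity}.}

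The plan is to show that the two discontinuous points $w_1,w_2\in\partial\mathbb{D}$ produced by Theorem \ref{thm:1} for $I_1$ and $I_2$ must coincide, and that the single point $w_0:=w_1=w_2$ then witnesses $\phi\to+\infty$ on all of $I=I_1\cup\{z_0\}\cup I_2$. By Theorem \ref{thm:1}(iii) applied to each of $I_1,I_2$ separately, there are $w_1,w_2\in E$ with $I_j\subset C(f,w_j)$. Since $\alpha=\pi$, the union $I=I_1\cup\{z_0\}\cup I_2$ is itself an open line segment. If $w_1\neq w_2$, then, arguing as in the proof of Corollary \ref{cor:polygonal bdr}, one of the two arcs of $\partial\mathbb{D}$ joining $w_1$ and $w_2$ has image under $\hat f$ equal to the single point $z_0$ (build a Jordan curve inside $\Omega$ from two arcs landing at $w_1,w_2$ together with $z_0$, and use that $f$ is a homeomorphism and that $\partial\Omega$ near $z_0$ is just the straight segment $I$); this forces $\hat f$ to be locally constant near an endpoint where it should be taking the values of $I_1$ on one side and $I_2$ on the other, hence equal to $z_0$ on a whole subarc, contradicting that $w_1,w_2$ are jump points with one-sided limits lying in $I_1$ and $I_2$ respectively. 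Therefore $w_1=w_2=:w_0$.

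Now $C(f,w_0)$ is, by Lemma \ref{lemma:HS}(iii), the closed segment joining the one-sided limits $\hat f(w_0^+),\hat f(w_0^-)$; since it contains both open segments $I_1$ and $I_2$ whose closures meet at $z_0$ and whose union is the segment $I$, we get $\overline{I}\subset C(f,w_0)$, and in fact $C(f,w_0)=\overline I$ with $I_0:=I$ being the open segment obtained by deleting the endpoints. Then I would invoke Corollary \ref{cor:1}: a jump point of a univalent harmonic map is automatically $1$-H\"older regular when the relevant boundary arc is a straight segment (near $z_0$ the boundary coincides with $I$, so $\hat f$ has a genuine jump discontinuity there and the one-sided behaviour is trivially Lipschitz, i.e. $\lambda=1$; alternatively this is already contained in the step-function discussion of Corollary \ref{cor:polygonal bdr}). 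Hence $\phi$ tends to plus or minus infinity on $I_0=I$. The sign must be $+\infty$ because $I\supset I_1$ and $\phi\to+\infty$ on $I_1$ by hypothesis; consistency of the single infinite limit associated to the single point $w_0$ forces the same sign on all of $I$.

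The main obstacle I anticipate is the verification that $w_1=w_2$: one must rule out the a priori possibility that $f$ ``spends'' a whole subarc of $\partial\mathbb{D}$ mapping to the point $z_0$ sandwiched between the arcs carrying $I_1$ and $I_2$. This is exactly the phenomenon flagged in Remark \ref{remark:interior_angle} as occurring precisely when $\alpha=\pi$, so it cannot simply be excluded by an angle count; instead it must be excluded here by the extra hypothesis that the signs agree (equivalently, that $I_1$ and $I_2$ fit together into a single segment $I$ along which $\phi$ has a consistent infinite limit). Concretely, if such an intermediate arc $J\subset\partial\mathbb{D}$ with $\hat f|_J\equiv z_0$ existed, then the radial limits of $f$ on $J$ would all equal $z_0\notin\Omega$, and one shows via Lemma \ref{lemma:HS} together with the argument in the proof of Corollary \ref{cor:polygonal bdr} (Jordan-curve/preimage-domain argument) that this is incompatible with $C(f,w_1)\supset I_1$, $C(f,w_2)\supset I_2$ both being full segments abutting $z_0$ from the interior side. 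Once $w_1=w_2$ is established, the rest is a direct citation of Lemma \ref{lemma:HS} and Corollary \ref{cor:1}, with no serious computation.
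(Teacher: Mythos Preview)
Your approach has a genuine gap at the crucial step, namely the claim that $w_1=w_2$. The argument you sketch for this does not go through: if $w_1\neq w_2$, then (as you correctly note, following the proof of Corollary \ref{cor:polygonal bdr}) one obtains an arc $J\subset\partial\mathbb{D}$ between $w_1$ and $w_2$ on which $\hat f\equiv z_0$. But this is \emph{not} a contradiction. It is perfectly consistent with Lemma \ref{lemma:HS}: at $w_1$ the one-sided limits of $\hat f$ are $z_0$ (from the $J$ side) and the far endpoint of $I_1$ (from the other side), so $w_1$ is a genuine jump point with $C(f,w_1)=\overline{I_1}$; similarly for $w_2$. Nothing in the Jordan-curve/preimage argument, nor in Lemma \ref{lemma:HS}, rules this configuration out. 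Indeed, this is exactly the configuration that \emph{does} occur when the signs of $\phi$ on $I_1$ and $I_2$ differ, and your argument never uses the hypothesis that the signs agree in any substantive way. In the paper the implication ``$\alpha=\pi$ and same signs $\Rightarrow w_1=w_2$'' is established in Lemma \ref{prop:discontiPoints} \emph{as a consequence of} Proposition \ref{prop:removablity}, so your strategy is effectively circular.

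There is also a secondary gap: even granting $w_1=w_2=:w_0$, your appeal to Corollary \ref{cor:1} requires $w_0$ to be $\lambda$-H\"older regular. Your justification (``the boundary near $z_0$ is a straight segment, so $\hat f$ is trivially Lipschitz'') conflates the geometry of $\partial\Omega$ with the rate at which the parametrization $\hat f$ approaches its one-sided limits on $\partial\mathbb{D}$; the latter depends on the global behavior of $\hat f$ and is not controlled by the hypotheses. The paper avoids both difficulties entirely by a different, self-contained argument: apply the Jenkins-Serrin existence and uniqueness theorem (Fact \ref{fact:JS}) on a small half-disk $D\cap\Omega$ around $z_0$ to produce a solution $\widetilde\phi$ with $\widetilde\phi\to+\infty$ on all of $D\cap I$ and $\widetilde\phi=\phi$ on $\partial D\cap\Omega$; uniqueness then forces $\phi=\widetilde\phi$, giving $\phi(z)\to+\infty$ as $z\to z_0$.
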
  

\begin{proof}
	Take a small disk $D$ centered at $z_0$ so that $D\cap (\partial \Omega\setminus I)=\emptyset$. Then, the Jenkins-Serrin theorem (Fact \ref{fact:JS}) shows that there exists a solution $\widetilde{\phi}$ of the minimal surface equation over $D\cap \Omega$ such that $\widetilde{\phi} \to +\infty$ on $D\cap I$ and $\widetilde{\phi}=\phi$ on $C_1=\partial D\cap \Omega$. On the other hand, if we consider the boundary value problem for $A_1=I_1\cap D$, $A_2=I_2\cap D$, and $C_1$ with the prescribed boundary data $\widehat{\phi}_1=\phi|_{C_1}$ in the setting of the Jenkins-Serrin theorem, then we already have two solutions $\phi$ and $\widetilde{\phi}$. Thus, the uniqueness shows $\phi=\widetilde{\phi}$ on $D\cap \Omega$. Since $\widetilde{\phi}(z)\to +\infty$ as $z\to z_0$, we have the conclusion.
\end{proof}
Also, the following lemma on discontinuous points of $\hat{f}$ holds.
\begin{lemma} \label{prop:discontiPoints}
	Assume $\phi$ tends to plus or minus infinity on each of adjacent segments $I_1$ and $I_2$ having a common endpoint $z_0$ with interior angle $\alpha$. Let $w_1, w_2\in \partial \mathbb{D}$ be corresponding discontinuous points of $\hat{f}$ to $I_1$ and $I_2$, respectively. Then, $w_1=w_2$ if and only if $\alpha=\pi$ and the signs of $\phi$ on $I_1$ and $I_2$ are the same. Further, if $w_1\neq w_2$, then $\hat{f}\equiv z_0$ on one of the arcs in $\partial \mathbb{D}$ joining $w_1$ and $w_2$.
\end{lemma}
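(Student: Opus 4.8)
The plan is to analyze the cluster sets $C(f,w_1)$ and $C(f,w_2)$ together with the basic structure theory of Hengartner--Schober (Lemma~\ref{lemma:HS}) and the reflection/sign-alternation results just established. First I would dispose of the easy direction: if $\alpha=\pi$ and the signs of $\phi$ on $I_1$ and $I_2$ agree, then by Proposition~\ref{prop:removablity} (applied after a translation/rotation so that $I_1\cup\{z_0\}\cup I_2$ becomes a single segment $I$) $\phi$ tends to plus (or minus) infinity on the whole segment $I$. Theorem~\ref{thm:1} then produces a single discontinuous point $w_0$ of $\hat f$ with $I\subset C(f,w_0)$; since $C(f,w_0)$ is itself a segment (Lemma~\ref{lemma:HS}(iii)) and contains both $I_1$ and $I_2$, and since each $I_j$ determines its discontinuous point uniquely (two distinct discontinuous points would have cluster sets meeting in a nondegenerate arc, contradicting $f$ being a homeomorphism, exactly as in the last paragraph of the proof of Theorem~\ref{thm:1}), we must have $w_1=w_2=w_0$.

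Next I would prove the converse, i.e.\ that $w_1=w_2$ forces $\alpha=\pi$ with equal signs. Suppose $w_1=w_2=:w_0$. Then $I_1\cup I_2\subset C(f,w_0)$, and since $C(f,w_0)$ is a line segment, the two open segments $I_1$ and $I_2$ must lie on a common straight line; as they share the endpoint $z_0$ and are on the same line, either they are ``opposite'' at $z_0$ (giving $\alpha=\pi$) or one contains the other near $z_0$, which is impossible for distinct edges of $\partial\Omega$. Hence $\alpha=\pi$. It remains to exclude opposite signs. Here I would invoke Proposition~\ref{prop:signAlter}'s companion situation or argue directly: if the signs differed, then $\varphi\to+\infty$ on $I_1$ and $\varphi\to-\infty$ on $I_2$ (say). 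By Corollary~\ref{cor:1} (using that a step-type, or more precisely a jump, discontinuous point is $1$-H\"older regular) the function $\psi\circ f$ satisfies the asymptotic (\ref{eq:Ipart}) at $w_0$ with a single value of $c$; but $\psi$ tamely degenerates to a \emph{future}-directed lightlike segment on $I_1$ and to a \emph{past}-directed one on $I_2$, which by Theorem~\ref{thm:1} would force $c=+1$ near the $I_1$-side and $c=-1$ near the $I_2$-side of $w_0$ simultaneously — a contradiction, since $c$ is fixed by the chosen branch of $\sqrt{h'g'}$ near $w_0$. (Alternatively, and perhaps cleaner: $C(f,w_0)$ being a segment joining $z_0^+$ and $z_0^-$, the sign of $\varphi$ along the boundary arc is governed by the single real constant $c|z_0^+-z_0^-|/\pi$ in (\ref{eq:Rpart}), so it cannot be $+\infty$ on one sub-segment and $-\infty$ on the other.)

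Finally, for the last assertion, assume $w_1\neq w_2$. Exactly as in the proof of Corollary~\ref{cor:polygonal bdr}, I would pick curves $\gamma_1,\gamma_2\colon[0,1)\to\mathbb D$ issuing from a common point, with $\gamma_i(t)\to w_i$ and $f(\gamma_i(t))\to z_0$; since $f$ is a homeomorphism, $\Gamma=f(\gamma_1)\cup f(\gamma_2)\cup\{z_0\}$ is a Jordan arc in $\overline\Omega$ touching $\partial\Omega$ only at $z_0$, and it cuts $\mathbb D$ into two pieces whose images are the two components of $\Omega$ cut off by $\Gamma$. One of the two arcs $J\subset\partial\mathbb D$ joining $w_1$ and $w_2$ bounds the ``small'' piece $D$ with $\overline{f(D)}\cap\partial\Omega=\{z_0\}$; since every point of $J$ other than $w_1,w_2$ is then either a continuity point of $\hat f$ mapping into $\{z_0\}$ or a discontinuous point whose (segment) cluster set lies in $\{z_0\}$, we get $\hat f\equiv z_0$ on $J$.

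The main obstacle I anticipate is the exclusion of opposite signs in the converse: one has to make sure the ``single branch of $\sqrt{h'g'}$'' argument (or the real-part asymptotic (\ref{eq:Rpart})) is applied consistently on both sides of $w_0$, i.e.\ that the one constant $c$ genuinely controls the sign of $\varphi$ on \emph{both} $I_1$ and $I_2$ and cannot flip. Everything else is a combination of Lemma~\ref{lemma:HS}, Theorem~\ref{thm:1}, Proposition~\ref{prop:removablity}, and the homeomorphism/Jordan-curve bookkeeping already used for Corollary~\ref{cor:polygonal bdr}.
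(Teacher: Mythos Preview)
Your overall structure matches the paper: the sufficiency direction via Proposition~\ref{prop:removablity}, the conclusion $\alpha=\pi$ from $C(f,w_0)$ being a segment, and the last statement via the curve argument of Corollary~\ref{cor:polygonal bdr} are exactly what the paper does.

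The gap is in excluding opposite signs when $w_1=w_2$. You invoke Corollary~\ref{cor:1} and the asymptotics \eqref{eq:Rpart}--\eqref{eq:Ipart}, but these rest on Theorem~\ref{thm:5}, which requires $w_0$ to be $\lambda$-H\"older regular. Your claim that a jump discontinuity is automatically $1$-H\"older regular is false: the existence of one-sided limits $z_0^\pm$ says nothing about the \emph{rate} at which $\hat f$ approaches them, and here the arcs of $\partial\mathbb D$ adjacent to $w_0$ correspond under $\hat f$ to the portions of $\partial\Omega$ beyond the far endpoints of $I_1,I_2$, about which the hypotheses say nothing. So Theorem~\ref{thm:5} is simply unavailable, and the obstacle you flagged (consistency of the branch constant $c$) is not the real one.

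The paper bypasses this by working with the \emph{bounded} harmonic function $t^\ast=\psi\circ f$ rather than with $\varphi\circ f$. After passing to a subdomain and re-uniformizing so that $w_0$ is the only boundary discontinuity and $\hat f(w_0^\pm)=z_1^\pm$ are the far endpoints of $I_1,I_2$, one uses only that $t^\ast$ is the Poisson integral of a bounded function with a single jump; this already forces $C(t^\ast,w_0)=[a,b]$ with $\{a,b\}=\{\psi(z_1^+),\psi(z_1^-)\}$, and since $z_0\in C(f,w_0)$ one gets $\psi(z_0)\in[a,b]$. But opposite signs of $\varphi$ mean, via Theorem~\ref{thm:1}, that $\psi$ degenerates to lightlike segments of opposite causal directions on $I_1$ and $I_2$, so $z_0$ is a strict extremum of $\psi$ along $I_1\cup\{z_0\}\cup I_2$ and hence $\psi(z_0)\notin[a,b]$ --- a contradiction. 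No regularity of $\hat f$ beyond boundedness is needed.
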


\begin{proof}
First, the latter statement is proved in the same way as in the proof of Corollary \ref{cor:polygonal bdr}. Next, the sufficiency of the condition $w_1=w_2$ in the former statement follows immediately from Proposition \ref{prop:removablity}. Thus, we prove the necessity.

Assume $w_1=w_2$. Then $\alpha=\pi$ as stated in Remark \ref{remark:interior_angle}. Thus, to obtain a contradiction, we suppose now that the signs of $\phi$ on $I_1$ and $I_2$ are different. Let $\psi$ be the dual of $\phi$. By taking an appropriate subdomain of $\mathbb{D}$ whose boundary contains $w_1$ and applying the uniformization theorem if necessary, we may assume that the one-sided limits $z_1^{\pm}:=\hat{f}(w_1^{\pm})$ are the endpoints of $I_1$ and $I_2$ different from $z_0$, and that $t:=\psi\circ f$ is continuous on $\partial \mathbb{D}\setminus \{w_1\}$ and $t(w_1^{\pm})=\psi(z_1^{\pm})$. Since $t$ can be written as the Poisson integral of its boundary function, the well-known argument shows $C(t,w_1)=[a,b]$ where $a=\min\{\psi(z_1^+),\psi(z_1^-)\}$ and $b=\max\{\psi(z_1^+),\psi(z_1^-)\}$ (see, Section \ref{subsec:converseThm1} or Section \ref{subsec:sym}). This implies, in particular, $a<\psi(z_0)<b$. On the other hand, $\psi$ tamely degenerates to lightlike segments on $I_1$ and $I_2$ with different causal directions, by Theorem \ref{thm:1}. 
This implies that $\psi(z_0)<a$ or $b<\psi(z_0)$. We have a contradiction.
\end{proof}

Under the above observations, we have the following reflection principle:

\begin{theorem} \label{thm:reflection}
	Let $\Omega$ be a bounded simply connected Jordan domain, whose boundary contains segments $I_1$ and $I_2$ having a common endpoint $z_0$ with interior angle $\alpha$. And let $\phi$ be a solution of the minimal surface equation over $\Omega$, and $\psi$ its dual.
	
	Assume that $\varphi$ tends to plus or minus infinity on $I_1$ and $I_2$, respectively, and that the signs of $\phi$ on $I_1$ and $I_2$ differ if $\alpha=\pi$. Then the following statements hold$:$
	
	\begin{itemize}   \setlength{\leftskip}{-3ex}
			\item[(a)] The graph of $\phi$ is extended to a generalized minimal surface $X_{\mathrm{min}}\colon \mathbb{D}\rightarrow \mathbb{E}^3$ such that $X_{\mathrm{min}}(\mathbb{D}\cap \mathbb{H})=\mathrm{graph}(\phi)$ and $X_{\mathrm{min}}$ admits a vertical line $L=X_{\mathrm{min}}(\mathbb{D}\cap\mathbb{R})$ on $z_0$ with the symmetry $X_{\mathrm{min}}(\overline{w})=\sigma\circ X_{\mathrm{min}}(w)$, where $\sigma$ is the $\pi$-rotation with respect to $L$. 
			\item[(b)] The graph of $\psi$ is extended to a generalized maximal surface $X_{\mathrm{max}}\colon \mathbb{D} \rightarrow \mathbb{L}^3$ such that $X_{\mathrm{max}}(\mathbb{D}\cap \mathbb{H})=\mathrm{graph}(\psi)$ and $X_{\mathrm{max}}$ has a shrinking singularity $p_0=(z_0,\psi(z_0))=X_{\mathrm{max}}(\mathbb{D}\cap \mathbb{R})$ with the symmetry $X_{\mathrm{max}}(\overline{w})=\tau\circ X_{\mathrm{max}}(w)$, where $\tau$ is the point symmetry with respect to $p_0$.
	\end{itemize}
\end{theorem}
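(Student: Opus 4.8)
The plan is to derive the reflection principle from the behavior of the harmonic mapping $f$ near the discontinuous point(s) $w_1, w_2 \in \partial\mathbb{D}$ associated to $I_1$ and $I_2$, reducing everything to a single application of the Schwarz reflection principle for harmonic functions. First I would normalize: after pre-composing $f$ with a M\"obius transformation of $\mathbb{D}$ and applying Lemma \ref{prop:discontiPoints}, I may assume that $w_1=w_2=:w_0=1$ when $\alpha=\pi$ with opposite signs (the genuinely singular case), and more generally that $z_0 = \hat f(w_0^+) = \hat f(w_0^-)$ when $w_1\ne w_2$ after collapsing the arc on which $\hat f\equiv z_0$ (given by the last sentence of Lemma \ref{prop:discontiPoints}) — so in all cases there is a single boundary point $w_0$ with $C(f,w_0)\ni z_0$ as the relevant vertex and a boundary arc $\gamma\subset\partial\mathbb{D}$ through $w_0$ on which, by Lemma \ref{lemma:HS}, $\hat f$ is continuous with values on $I_1\cup\{z_0\}\cup I_2$. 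Placing coordinates so that $z_0=0$, $\mathbb{H}$ is the upper half-plane and $I_1, I_2$ lie along a line (or along two rays meeting at angle $\alpha$), the key point is that the third coordinate function $t=\phi\circ f=\Re F$ (resp. $\psi\circ f=\Im F$) is harmonic on $\mathbb{D}$ and — by Theorem \ref{thm:1} applied to $I_1$ and $I_2$ — blows up to $+\infty$ (or $-\infty$) along $\gamma$ on each side of $w_0$.

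Next I would record the boundary behavior of $F=F_f$ near $w_0$ supplied by Theorem \ref{thm:5} (or rather its consequence \eqref{eq:Rpart}, \eqref{eq:Ipart}): since $\hat f$ is a step function near $w_0$ in the polygonal reduction, $w_0$ is $1$-H\"older regular, so $F(w) = c\,\tfrac{|z_0^+-z_0^-|}{\pi}\log(w-w_0) + M + o(1)$ non-tangentially. In particular $h'g' \sim \bigl(\tfrac{c}{2}\tfrac{|z_0^+-z_0^-|}{\pi}\bigr)^2 (w-w_0)^{-2}$, and one checks that $h'$ and $g'$ each have a simple pole-type singularity $\asymp (w-w_0)^{-1}$ (this is exactly the estimate in the outline of the proof of Theorem \ref{thm:5}). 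Hence the map $w\mapsto(w-w_0)h'(w)$ extends continuously and non-vanishingly across $w_0$, and similarly for $g'$; writing the local model, $f$ behaves near $w_0$ like $z_0 + $ (constant)$\cdot\log(w-w_0) + (\text{bounded holomorphic}) + \overline{(\text{bounded antiholomorphic})}$, whose image maps the two sides of the arc $\gamma$ to the two rays $I_1, I_2$. Because $X_{\min}=(f,\Re F)$ and $X_{\max}=(f,\Im F)$ have harmonic coordinate functions that are real-analytic up to the open arc $\gamma\setminus\{w_0\}$ and whose boundary values lie on the vertical line over $z_0$ (for the third coordinate this is the blow-up; for $f$ it is $\hat f\equiv z_0$ on the arc when $w_1\ne w_2$, or the limiting ``fan'' collapsing to $z_0$ when $w_1=w_2$), I can invoke the Schwarz reflection principle: each coordinate function of $X_{\min}$, after subtracting the affine part that measures the constraint plane through $z_0$, extends harmonically across $\gamma$ by odd reflection. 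This produces the generalized minimal surface $\widetilde{X_{\min}}\colon\mathbb{D}\to\mathbb{E}^3$ with $\widetilde{X_{\min}}(\bar w)=\sigma\circ\widetilde{X_{\min}}(w)$ where $\sigma$ is the $\pi$-rotation about $L=\widetilde{X_{\min}}(\mathbb{D}\cap\mathbb{R})$, the vertical line over $z_0$; the conformality relation $(\Phi_1)^2+(\Phi_2)^2+(\Phi_3)^2=0$ persists by analytic continuation, so the extension is a generalized minimal surface and the fixed-point set $\mathbb{D}\cap\mathbb{R}$ maps exactly onto $L$. The statement (b) for $\psi$ then follows by the identical reflection argument applied to $X_{\max}=(f,\Im F)$: the same reflection $w\mapsto\bar w$ now acts on $\mathbb{L}^3$, the first two coordinates of $X_{\max}$ reflect exactly as before (collapsing onto $z_0$), while $\Im F$ is even under $w\mapsto\bar w$ up to sign precisely because $\Re F\to\pm\infty$ forces $\Im F$ to stay bounded with a limit at $w_0$; the upshot is $\widetilde{X_{\max}}(\bar w)=\tau\circ\widetilde{X_{\max}}(w)$ with $\tau$ the point symmetry about $p_0=(z_0,\psi(z_0))$, and since $\widetilde{X_{\max}}(\mathbb{D}\cap\mathbb{R})$ is the single point $p_0$ while $X_v$ is lightlike there, $p_0$ is a shrinking singular point in the sense of Kim–Yang, identifying it as a shrinking singularity.

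The main obstacle — and the place where the $\alpha=\pi$/opposite-signs hypothesis is genuinely used — is establishing the reflection set-up when $w_1=w_2=w_0$ is a single point, i.e. when $\alpha=\pi$ and the signs on $I_1,I_2$ agree so that no extension is expected, versus the case where this is excluded. Concretely, the delicate step is verifying that near $w_0$ the map $f$ really does send the two sides of the arc onto $I_1$ and $I_2$ symmetrically under $w\mapsto\bar w$ rather than, say, onto the same ray — this is where I need the precise asymptotics $h'(w)\sim -\tfrac{z_0^+-z_0^-}{2\pi i}(w-w_0)^{-1}$ from the proof of Theorem \ref{thm:5} together with the geometric fact that $z_0^+$ and $z_0^-$ are the two far endpoints of $I_1\cup I_2$, so that the logarithmic term of $f$ has the correct argument to open up an angle of exactly $\alpha$ on each side. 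Once that local normal form is in hand, the reflection is routine; and one checks that when $\alpha=\pi$ with equal signs, Proposition \ref{prop:removablity} shows $\phi$ in fact extends smoothly across $z_0$ as an interior point (no discontinuity of $\hat f$, no branch point), so there is genuinely nothing to reflect — consistent with that case being excluded from the hypotheses. The remaining bookkeeping — that the continued $\Phi_j$ do not all vanish identically, so $\widetilde{X_{\max}}$ is a bona fide generalized maximal surface, and that the fixed locus has the claimed image — is immediate from the non-vanishing of $(w-w_0)h'$ and $(w-w_0)g'$ at $w_0$.
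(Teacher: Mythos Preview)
Your proposal has a genuine gap rooted in a misidentification of the reflection arc. Under the hypotheses, Lemma \ref{prop:discontiPoints} guarantees that the discontinuous points satisfy $w_1\neq w_2$ in \emph{every} case (the condition ``signs differ if $\alpha=\pi$'' is precisely what rules out $w_1=w_2$), and moreover that $\hat f\equiv z_0$ on an entire open arc $J_0\subset\partial\mathbb{D}$ joining $w_1$ to $w_2$. This arc $J_0$ is the set across which one reflects; your proposal instead ``collapses'' this arc to a single point $w_0$ and attempts to reflect across an arc on which $\hat f$ allegedly takes values in $I_1\cup\{z_0\}\cup I_2$ and $t=\phi\circ f$ blows up. No such arc exists: the segments $I_j$ occur only as cluster sets at the isolated points $w_j$, not as boundary values of $\hat f$ on an arc, and where $\phi\circ f$ blows up one cannot apply Schwarz reflection at all.

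The paper's argument is much more direct and avoids Theorem \ref{thm:5} and the pole asymptotics of $h',g'$ entirely. After normalizing $z_0=0$, $\psi(z_0)=0$, one pulls $J_0$ back to $(-1,1)$ via a conformal map $\Psi\colon\mathbb{D}\cap\mathbb{H}\to\mathbb{D}$. On $(-1,1)$ one then has $f\circ\Psi\equiv 0$ (because $\hat f\equiv z_0$ on $J_0$) and $\mathrm{Im}(F)\circ\Psi\equiv \psi(z_0)=0$ (because $\psi$ is Lipschitz, hence extends continuously to $z_0$). The Schwarz reflection principle applied to the harmonic map $f\circ\Psi$ and to the holomorphic function $F\circ\Psi$ gives $f\circ\Psi(\bar w)=-f\circ\Psi(w)$ and $F\circ\Psi(\bar w)=\overline{F\circ\Psi(w)}$, from which both symmetry relations in (a) and (b) follow immediately. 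Note in particular that $\mathrm{Re}(F)=\phi\circ f$ is \emph{bounded} along $J_0$ (it extends real-analytically across), contrary to your claim that the third coordinate blows up on the reflection set; the blow-up of $\phi$ occurs only as $w\to w_1$ or $w\to w_2$, i.e.\ at the endpoints of $J_0$, not along its interior.
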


\begin{proof}
The statement (a) seems to be well-known at least when $\Omega$ is convex (see \cite{Karcher} and \cite[p.~218]{DHS}, for example), but for the convenience of readers and the purpose of clarity, we give a proof which includes this case. 
Since $|\nabla\psi|<1$, the function $\psi$ extends continuously to $I_1\cup\{z_0\}\cup I_2$. Thus, we may assume that $z_0=0$ and $\psi(z_0)=0$. Hence, $\sigma$ and $\tau$ are written as $\sigma(x,y,t)=(-x,-y,t)$ and $\tau(x,y,t)=(-x,-y,-t)$, respectively. Let $f=h+\overline{g}\colon \mathbb{D}\rightarrow \Omega$ be the corresponding orientation-preserving univalent harmonic mapping.
By Theorem \ref{thm:1}, there are discontinuous points $w_1,w_2\in \partial{\mathbb{D}}$ of the the boundary function $\hat{f}\colon \partial{\mathbb{D}}\rightarrow \partial{\Omega}$ such that $I_j\subset C(f,w_j)$ $(j=1,2)$. Then, $w_1\neq w_2$ and $\hat{f}\equiv 0$ on an arc $J_0\subset \partial \mathbb{D}$ connecting $w_1$ and $w_2$, by the assumption and Lemma \ref{prop:discontiPoints}.
We can easily construct  a bi-holomorphic function $\Psi\colon \mathbb{D}\cap\mathbb{H} \rightarrow \mathbb{D}$ such that $\Psi((-1,1))=J_0$. Put 
\[
F(w)=2i\int_0^w\sqrt{h'(\zeta)g'(\zeta)}d{\zeta}+c,
\]
 where the constant $c\in \mathbb{C}$ is determined so that $\phi=\text{Re}(F)\circ f^{-1}$ and $\psi=\text{Im}(F)\circ f^{-1}$. Then for each $w\in (-1,1)$, we have
\[
f\circ \Psi(w)=z_0=0,\quad \text{Im}(F)\circ \Psi(w)=\psi(z_0)=0.
\]
By the reflection principle, we can extend $f\circ \Psi$ and $F\circ \Psi$ to a harmonic mapping and a holomorphic function on $\mathbb{D}$ satisfying $f\circ \Psi (\overline{w})=-f\circ \Psi(w)$ and $F\circ \Psi (\overline{w})=\overline{F\circ \Psi (w)}$, respectively. Using these extended maps, we obtain a generalized minimal surface $X_{\text{min}}=(f\circ \Psi, \text{Re}{(F)}\circ \Psi)\colon\mathbb{D}\rightarrow \mathbb{E}^3$ and a generalized maximal surface $X_{\text{max}}=(f\circ \Psi, \text{Im}{(F)}\circ \Psi)\colon\mathbb{D}\rightarrow \mathbb{L}^3$ satisfying 
\begin{align*}
X_{\text{min}}(\overline{w})&=(-f\circ \Psi (w), \text{Re}{(F)}\circ \Psi (w))=\sigma(X_{\text{min}}(w)),\\
X_{\text{max}}(\overline{w})&=(-f\circ \Psi (w), -\text{Im}{(F)}\circ \Psi (w))=\tau(X_{\text{min}}(w)),
\end{align*}
which are desired relations.
\end{proof}
	
\begin{remark} \label{remark:extended}
	Notice that the extended surfaces $X_{\min}=(f\circ \Psi, \text{Re}{(F)}\circ \Psi)$ and $X_{\max}=(f\circ \Psi, \text{Im}{(F)}\circ \Psi)$ also have representations $X_{\min}=(\widetilde{f},{\rm Re}(\widetilde{F}))$ and $X_{\max}=(\widetilde{f},{\rm Im}(\widetilde{F}))$, where $\widetilde{f}$ is a harmonic function and $\widetilde{F}=F_{\widetilde{f}}$ is a holomophic function defined by \eqref{eq:F}. However, it should be remarked that $\widetilde{f}$ is no longer univalent since $\widetilde{f}(w)=z_0$ on $(-1,1)$, and is orientation-reversing on the lower half part of the unit disk since $\widetilde{f}(\overline{w})=-\widetilde{f}(w)$. In this case, on the lower half part of the unit disk, $X_{\max}$ actually parametrizes the dual maximal graph of the minimal graph parametrized by $(\widetilde{f}, -{\rm Re}(\widetilde{F}))$, see Proposition \ref{prop:parametric_duality}.
\end{remark}

\begin{remark} \label{remark:branchpoint}
The generalized minimal surface $X_{\mathrm{min}}=(f\circ \Psi, \text{Re}{(F)}\circ \Psi)$ in Theorem \ref{thm:reflection} may have branch points on the vertical line segment $L=X_{\mathrm{min}}((-1,1))$.  Indeed, by the construction of $X_{\min}$, an easy calculation shows that each branch point on $L$ corresponds to a zero point of $h'$, the derivative of the holomorphic part of the corresponding harmonic mapping. Further, it can be easily seen that the monotonicity of $\text{Re}{(F)}\circ \Psi$ on $(-1,1)$ changes exactly at zero points of $h'$ of odd order. See the left-hand side of Figure \ref{fig:branch} and also \cite[Example 1]{MS}.
\end{remark}

\begin{figure}[htb]
 \begin{center}
   \begin{tabular}{c@{\hspace{0.3cm}}c@{\hspace{-0.4cm}}c}
        \includegraphics[height=5cm]{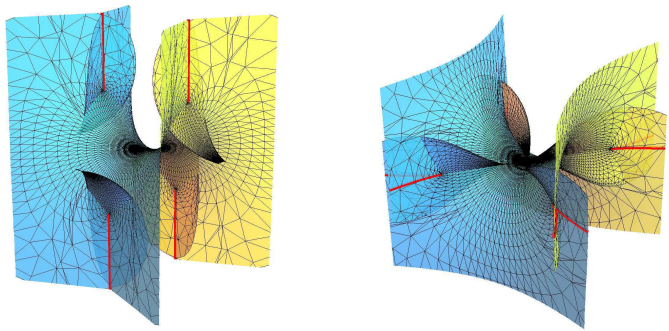} & &
    \end{tabular}
\caption{A minimal surface in Theorem \ref{thm:reflection} with branch points on vertical half lines and its conjugate minimal surface.}
\label{fig:branch}
 \end{center}
\end{figure}

\subsection{Conjugate surface and symmetry, quantitative relation} \label{subsec:sym}
As proved by Karcher \cite{Karcher}, it is known that the Jenkins-Serrin minimal graphs in \cite{
JS} over equilateral convex $2k$-gons which diverge to plus or minus infinity alternately on each edge, have the conjugate graphs bounded by horizontal geodesics lying alternately in a top and a bottom symmetry planes. So by repeating reflections in these planes, the conjugate minimal graphs are extended to complete embedded minimal surfaces with vertical translation period, which are now referred as {\it saddle towers}. In this and the next subsections, we discuss symmetry relations under the conjugation and the dual operation in \eqref{eq:duality}.

We denote by $\xi^{\ast}$ the conjugate harmonic function of a real-valued harmonic function $\xi$.
Similarly, for a complex-valued harmonic function $\zeta=\xi+i\eta$, we denote $\zeta^{\ast}=\xi^{\ast}+i\eta^{\ast}$. 
Let $\phi$ be a solution of the minimal surface equation over $\Omega$, $\psi$ its dual, and $f=x+iy=h+\overline{g}\colon \mathbb{D}\to \Omega$ the corresponding harmonic mapping. Then, recall that $X_{\min}=(f,\phi\circ f)$ and $X_{\max}=(f, \psi\circ f)$ give isothermal parametrizations of $\Sigma_{\min}={\rm graph}(\phi)$ and $\Sigma_{\max}={\rm graph}(\psi)$, respectively. Further, we can write $\phi\circ f={\rm Re}(F)$ and $\psi \circ f={\rm Im}(F)$ for a holomorphic function
\begin{align*}
	F(w)=2i\int_0^w \sqrt{h'g'}d\zeta +c,
\end{align*}
for some constant $c\in \mathbb{C}$. Therefore, if we denote $t=\phi\circ f$, then $t^{\ast}=\psi \circ f$, that is, $X_{\min}=(x,y,t)=(f,t)$ and $X_{\max}=(x,y,t^{\ast})=(f,t^{\ast})$. On the other hand, 
\begin{align*}
	X_{\min}^{\ast}&=(x^{\ast},y^{\ast},t^{\ast})=(f^{\ast},t^{\ast})\colon \mathbb{D}\to \mathbb{E}^3, \\ 	
	X_{\max}^{\ast}&=(x^{\ast},y^{\ast},-t)=(f^{\ast},-t)\colon \mathbb{D}\to \mathbb{L}^3,
\end{align*}
are called the conjugate minimal surface of $X_{\min}$ and the conjugate maximal surface of $X_{\max}$, which are isometric to $X_{\min}$ and $X_{\max}$, respectively. We notice that each of the surfaces $X_{\min}$, $X_{\max}$, $X_{\min}^{\ast}$ and $X_{\max}^{\ast}$ is given by a combination of functions $f$, $f^{\ast}$, $t$, and $t^{\ast}$. Further, the following commutative diagram between the conjugation and the dual operation holds:
\[
  \begin{diagram}
    \node{X_{\text{min}}=(f,t)} \arrow{e,t}{\text{dual}} \arrow{s,l}{\text{conjugate}} \node{X_{\text{max}}=(f,t^*)} \arrow{s,r}{\text{conjugate}} \\
    \node{X^*_{\text{min}}=(f^*,t^*)} \arrow{e,t}{\text{dual}} \node{X^*_{\text{max}}=(f^*,-t)}
  \end{diagram}
\]
Therefore, to observe the boundary behavior of these four surfaces, it suffices to examine the boundary behavior of the only four functions $f$, $f^{\ast}$, $t$, and $t^{\ast}$.

\begin{remark}
Although the conjugate surfaces $X_{\min}^{\ast}$ and $X_{\max}^{\ast}$ are no longer graphs in general, by Proposition \ref{prop:parametric_duality}, we can define the duals of them as in the above diagram.
\end{remark}

\begin{remark}
On the above commutativity, we remark that the dual operation does not preserve ambient isometries in $\mathbb{E}^3$ and $\mathbb{L}^3$ as discussed by Ara\'{u}jo-Leite \cite{AL}.
\end{remark}

Assume now that $\Omega$ is a bounded simply connected Jordan domain, whose boundary contains three segments $I_1$, $I_2$, and $I_3$ which lie in the positive direction in this order, and that $I_j$ and $I_{j+1}$ have a common endpoint $z_j$ with interior angle $\alpha_j$ for $j=1,2$. Further, we suppose that $\phi$ tends to plus infinity on $I_2$, and tends to plus or minus infinity on $I_1$ and $I_3$ so that the signs on $I_j$ and $I_{j+1}$ are different if $\alpha_j =\pi$ for $j=1,2$. Then, Theorem \ref{thm:1} shows that there exist three discontinuous points $w_1, w_2, w_3\in \partial \mathbb{D}$ of $\hat{f}$ such that $I_j\subset C(f,w_j),\ j=1,2,3$. These three points are distinct, and it holds that $\hat{f}\equiv z_j$ on an arc $J_j\subset \partial \mathbb{D}$ which joins $w_j$ and $w_{j+1}$ for each $j=1,2$, by Proposition \ref{prop:discontiPoints}. Further, it is clear that $\overline{I_2}=C(f,w_2)$, and $w_1, w_2, w_3$ lie in $\partial \mathbb{D}$ in the counterclockwise direction in this order, since $f$ is an orientation-preserving homeomorphism. Under these settings, we consider the boundary behavior of the functions $f$, $f^{\ast}$, $t$, and $t^{\ast}$ near $w_2$, respectively.

First, we investigate the boundary behavior of $f$ and $f^{\ast}$. Recall that $f$ can be written as the Poisson integral of $\hat{f}$. Thus, the well-known argument for the harmonic measure (or a direct computation) implies,
\begin{align*}
	\begin{gathered}
	f(w)=\frac{z_1}{\pi}\left\{ \arg\left(\frac{w-w_2}{w-w_1}\right)-\frac{\arg(w_2-w_1)}{2}\right\}+\frac{z_2}{\pi}\left\{ \arg\left(\frac{w-w_3}{w-w_2}\right)-\frac{\arg(w_3-w_2)}{2}\right\}\\[1ex]
	+\frac{1}{2\pi}\int_0^{2\pi}{\rm Re}\left(\frac{e^{it}+w}{e^{it}-w}\right) W(e^{it})dt,
	\end{gathered}
\end{align*}
where $W=0$ on $J=J_1\cup \{w_2\}\cup J_2$, and $W=\hat{f}$ otherwise. Observe that $(\arg w)^{\ast}=-\log|w|$ and that the conjugate harmonic function of the third term on the right-hand side, which can be written as the conjugate Poisson integral
\begin{equation*}
	\frac{1}{2\pi}\int_0^{2\pi}{\rm Im}\left(\frac{e^{it}+w}{e^{it}-w}\right) W(e^{it})dt,
\end{equation*}
clearly tends to a constant as $w\to w_2$ since $W=0$ on $J$. We have
\begin{align}
	&\lim_{w\rightarrow w_2}\left| f(w)-\left\{z_2 \left(1-\frac{\alpha(w;w_2)}{\pi}\right)+z_1 \frac{\alpha(w;w_2)}{\pi}\right\} \right| =0,\label{eq:bdryf}\\[1ex]
	&\lim_{w\rightarrow w_2}\left| f^{\ast}(w)-\frac{z_2-z_1}{\pi}\log|w-w_2|-c_1 \right| =0, \label{eq:bdryfast}
\end{align}
for some constant $c_1\in \mathbb{C}$ and $\alpha(w;w_2)=\arg(i(1-\overline{w_2}w))$. It is shown that $f$ moves monotonically from $z_2$ to $z_1$ on $I_2$ when $\alpha(w;w_2)$ moves from $0$ to $\pi$ by \eqref{eq:bdryf}, and $f^{\ast}(w)$ diverges to infinity in $(z_1-z_2)$-direction as $w\to w_2$ by \eqref{eq:bdryfast}.

On the other hand, $t^{\ast}$ can be written as the Poisson integral of some bounded function, since $t^{\ast}$ is a bounded harmonic function, see \cite[p.72, Lemma 1.2]{Katz}. Further, $t^{\ast}=\psi\circ f$ satisfies $t^{\ast}(w)=\psi(z_1)$ on $J_1$ and $t^{\ast}(w)=\psi(z_2)$ on $J_2$. Notice that $\psi(z_2)-\psi(z_1)=|z_2-z_1|$, since $\psi$ tamely degenerates to a future-directed lightlike line segment on $I_2$ by Theorem \ref{thm:1}. Similarly to $f$ and $f^{\ast}$, we obtain
\begin{align}
	 &\lim_{w\to w_2}\left|\ t^{\ast}(w)+\frac{|z_2-z_1|}{\pi}\alpha(w;w_2)-\psi(z_2)\right|=0, \label{eq:bdrytast}\\[1ex]
	 &\lim_{w\to w_2}\left|\ t(w)+\frac{|z_2-z_1|}{\pi}\log|w-w_2|-c_2 \right| =0, \label{eq:bdryt}
\end{align}
for some constant $c_2\in \mathbb{R}$. Combining \eqref{eq:bdryf} to \eqref{eq:bdryt}, we immediately obtain the following relations between $X_{\min}$, $X_{\max}$, $X_{\min}^{\ast}$ and $X_{\max}^{\ast}$ together with quantitative relations at infinity.
\begin{theorem}
		Assume that $\partial \Omega$ contains three line segments $I_1,I_2$ and $I_3$ which lie in the positive direction in this order, and that $I_j$ and $I_{j+1}$ have a common endpoint $z_j$ with interior angle $\alpha_j,\ j=1,2$. Further, we suppose that $\phi$ tends to plus infinity on $I_2$, and tends to plus or minus infinity on $I_1$ and $I_3$ so that the signs on $I_j$ and $I_{j+1}$ are different if $\alpha_j =\pi$ for $j=1,2$. Then the following statements hold$:$
	\begin{itemize}   \setlength{\leftskip}{-3ex}
		\item $X_{\min}^{\ast}$ diverges to a vertical line segment of length $|I_2|=|z_2-z_1|$ at infinity in $(z_1-z_2)$-direction. 
		\item $X_{\max}^{\ast}$ diverges to $-\infty$ at infinity in $(z_1-z_2)$-direction. 
	\end{itemize}
Further, these vertical segment at infinity and infinite  point at infinity corresponds to the horizontal segment at infinity over $I_2$ which is a boundary of $X_{\min}$ as well as the future-directed lightlike line segment over $I_2$ which is a boundary of $X_{\max}$, under the conjugation and the dual operation.
\end{theorem}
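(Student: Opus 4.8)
The plan is to read off the asymptotics of all four surfaces as $w\to w_2$ by substituting the boundary estimates \eqref{eq:bdryf}--\eqref{eq:bdryt} for $f$, $f^{\ast}$, $t$, $t^{\ast}$, already established above, into the representations
\[
	X_{\min}=(f,t),\quad X_{\max}=(f,t^{\ast}),\quad X_{\min}^{\ast}=(f^{\ast},t^{\ast}),\quad X_{\max}^{\ast}=(f^{\ast},-t).
\]
First I would record the elementary dictionary common to all four cases: as $w\to w_2$ inside $\mathbb{D}$, the angular quantity $\alpha(w;w_2)=\arg\bigl(i(1-\overline{w_2}w)\bigr)$ ranges over the whole open interval $(0,\pi)$ and controls the bounded ``transverse'' coordinate of each surface, while $\log|w-w_2|\to-\infty$ and controls the unbounded one; moreover the level sets of $\alpha(\cdot\,;w_2)$ near $w_2$ are circular arcs ending at $w_2$, so $\log|w-w_2|$ can be sent to $-\infty$ with $\alpha$ held fixed. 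With this dictionary each assertion becomes a short verification.

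Next I would dispose of the four boundary pieces over $I_2$. Applying \eqref{eq:bdryf} and \eqref{eq:bdryt} to $X_{\min}=(f,t)$ shows that $X_{\min}$ diverges to $+\infty$ over the horizontal segment $\{\,z_2(1-\alpha/\pi)+z_1\alpha/\pi:0<\alpha<\pi\,\}=I_2$; applying \eqref{eq:bdryf} and \eqref{eq:bdrytast} to $X_{\max}=(f,t^{\ast})$ shows that $X_{\max}$ converges to the straight segment joining $(z_1,\psi(z_1))$ and $(z_2,\psi(z_2))$, which is lightlike because $|z_2-z_1|=|\psi(z_2)-\psi(z_1)|$ and future-directed for the positive orientation of $I_2$ (consistently with Theorem \ref{thm:1}). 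Applying \eqref{eq:bdryfast} and \eqref{eq:bdrytast} to $X_{\min}^{\ast}=(f^{\ast},t^{\ast})$: the horizontal part $f^{\ast}(w)=\tfrac{z_2-z_1}{\pi}\log|w-w_2|+c_1+o(1)$ escapes to infinity in the $(z_1-z_2)$-direction, while the vertical part $t^{\ast}(w)=-\tfrac{|z_2-z_1|}{\pi}\alpha(w;w_2)+\psi(z_2)+o(1)$ sweeps, as $\alpha$ runs over $(0,\pi)$, the segment with endpoints $\psi(z_2)$ and $\psi(z_2)-|z_2-z_1|=\psi(z_1)$, of length $|z_2-z_1|=|I_2|$; here one uses $\psi(z_2)-\psi(z_1)=|z_2-z_1|$, valid since $\psi$ tamely degenerates to a future-directed lightlike segment on $I_2$ by Theorem \ref{thm:1}. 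This gives the first bullet. Finally \eqref{eq:bdryt} applied to $X_{\max}^{\ast}=(f^{\ast},-t)$ gives $-t(w)=\tfrac{|z_2-z_1|}{\pi}\log|w-w_2|-c_2+o(1)\to-\infty$, while $f^{\ast}$ again diverges in the $(z_1-z_2)$-direction, which is the second bullet.

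For the final assertion I would invoke the commutative square between the conjugation and the dual operation displayed in Section \ref{subsec:sym}: the four surfaces arise from the single quadruple $(f,f^{\ast},t,t^{\ast})$ by conjugation, which replaces a real harmonic coordinate by its conjugate, and by dualization, which on the level of parametrizations exchanges ${\rm Re}(F)$ and ${\rm Im}(F)$ (equivalently $t$ and $t^{\ast}$, up to a sign). The four boundary pieces over $I_2$ just identified --- the horizontal segment at infinity, the future-directed lightlike segment, the vertical segment at infinity, and the point at $-\infty$ --- are precisely the limit sets of the corresponding parametrizations along $w\to w_2$, so they correspond to one another exactly as those parametrizations do under conjugation and dualization.

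The step I expect to be the main (if modest) obstacle is not a computation but giving a clean meaning to ``diverges to a vertical line segment at infinity'': one has to organize the two-parameter limit so that, for each fixed $\alpha\in(0,\pi)$, letting $|w-w_2|\to0$ along the corresponding arc sends $X_{\min}^{\ast}$ off to infinity in the $(z_1-z_2)$-direction while its height coordinate converges to the point $\psi(z_2)(1-\tfrac{\alpha}{\pi})+\psi(z_1)\tfrac{\alpha}{\pi}$ of the limiting segment, and then to check that these arcs sweep out a region asymptotic to a half-strip whose cross-section is that vertical segment. Given \eqref{eq:bdryf}--\eqref{eq:bdryt}, this is routine, but it is where the only real care lies.
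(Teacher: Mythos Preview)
Your proposal is correct and follows exactly the paper's own approach: the paper derives the four limits \eqref{eq:bdryf}--\eqref{eq:bdryt} in the paragraphs preceding the theorem and then simply states that ``combining \eqref{eq:bdryf} to \eqref{eq:bdryt}, we immediately obtain'' the result, which is precisely what you carry out in detail. Your added remark on interpreting ``diverges to a vertical line segment at infinity'' via the two-parameter limit (fixing $\alpha(w;w_2)$ and sending $|w-w_2|\to0$) matches the paper's informal definition given right after the theorem statement.
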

\noindent
Here, ``$X_{\min}^{\ast}$ diverges to a vertical line segment of length $|I_2|$ at infinity in $(z_1-z_2)$-direction'' means that $f^{\ast}$ diverges to infinity in $(z_1-z_2)$-direction and $t^{\ast}$ moves on an interval of length $|I_2|$ as cluster points (see Figure \ref{zu18}).

\begin{figure}[htbp]
       \begin{center}
           \includegraphics[scale=.84]{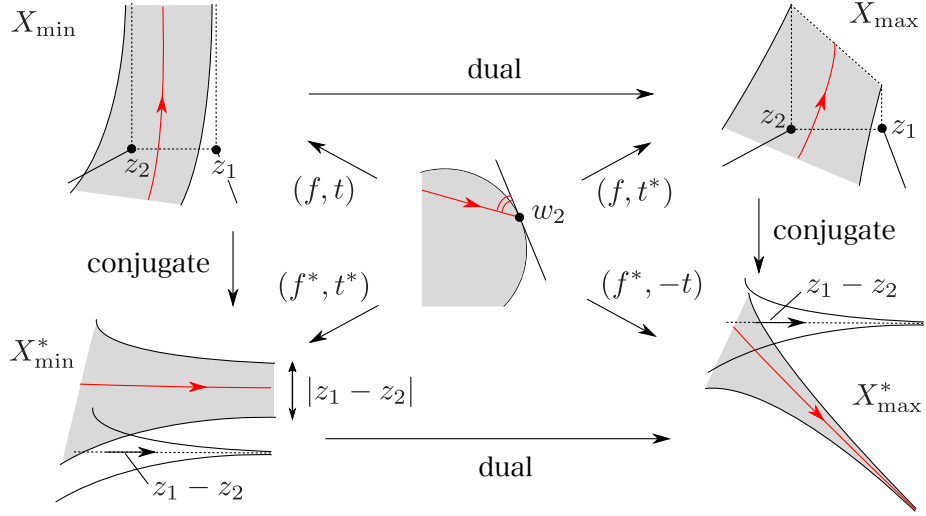} 
       \end{center}
       \caption{Boundary behavior of the four surfaces $X_{\min}$, $X_{\max}$, $X_{\min}^{\ast}$ and $X_{\max}^{\ast}$ around a discontinuous point $w_2$ of $f$.}   \label{zu18}
\end{figure} 

\subsection{Singularity and reflection symmetry} \label{subsec:singularity}

We consider the following situation again. Let $\Omega$ be a bounded simply connected Jordan domain whose boundary contains adjacent segments $I_1$ and $I_2$ having a common endpoint $z_0=0$ with interior angle $\alpha$. Assume a solution $\phi$ of the minimal surface equation over $\Omega$ tends to plus or minus infinity on $I_1$ and $I_2$, so that the signs are different if $\alpha=\pi$. We let $\psi$ be the dual of $\phi$ with the normalization $\psi(0)=0$. Then recall that, in Section \ref{subsec:reflection}, we constructed a generalized minimal surface $X_{\min}\colon\mathbb{D}\to \mathbb{E}^3$ and a generalized maximal surface $X_{\max}\colon \mathbb{D}\to \mathbb{L}^3$ which parametrize ${\rm graph}(\phi)$ and ${\rm graph}(\psi)$ on $\mathbb{D}\cap \mathbb{H}$, and admit a vertical line segment $L$ over $z_0$ and shrinking singularity $p=(z_0,\psi(z_0))=(0,0)$ on the interval $(-1,1)$, respectively. Further, if we denote $X_{\min}=(f,t)$ with the corresponding harmonic function $f$ (see Remark \ref{remark:extended}), then they have the reflection symmetries
\begin{align*}
	&X_{\min}(\overline{w})=(f(\overline{w}),t(\overline{w}))=(-f(w),t(w))=\sigma\circ X_{\min}(w),\\
	&X_{\max}(\overline{w})=(f(\overline{w}),t^{\ast}(\overline{w}))=(-f(w),-t^{\ast}(w))=\tau\circ X_{\max}(w),
\end{align*}
where $\sigma$ and  $\tau$ denote the $\pi$-rotation with respect to $L$ and the point symmetry with respect to $p$, respectively. The conjugate surfaces are also defined for generalized surfaces in the same way, that is, $X_{\min}^{\ast}=(f^{\ast},t^{\ast})$ and $X_{\max}^{\ast}=(f^{\ast},-t)$. Since $t^{\ast}(w)=\psi(z_0)=0$ for $w\in (-1,1)$, the curve $\Gamma=X_{\min}^{\ast}((-1,1))$ is contained in the $xy$-plane. Notice that, since $f(w)=0$ on $(-1,1)$, the reflection principle for harmonic functions implies that $f^{\ast}(\overline{w})=f^{\ast}(w)$ holds. Thus, they also have the following reflection symmetries,
\begin{align*}
	&X_{\min}^{\ast}(\overline{w})=(f^{\ast}(\overline{w}),t^{\ast}(\overline{w}))=(f^{\ast}(w),-t^{\ast}(w))=\rho \circ X_{\min}^{\ast}(w),\\
	&X_{\max}^{\ast}(\overline{w})=(f^{\ast}(\overline{w}),-t(\overline{w}))=(f^{\ast}(w),-t(w))=X_{\max}^{\ast}(w),
\end{align*}
where $\rho$ denotes the planar symmetry with respect to the $xy$-plane. 

On the other hand, there are several know facts related to the above situation as follows: A straight-line on $X_{\text{min}}$ corresponds to a planar geodesic on $X^*_{\text{min}}$, which is also a curvature line, see \cite[Section 3.4]{DHS} for example. Thus, in particular, the vertical line segment $L=X_{\min}((-1,1))$ corresponds to the horizontal geodesic curvature line $\Gamma=X_{\min}^{\ast}((-1,1))$. A shrinking singularity on $X_{\text{max}}$ also corresponds to a folding singularity on $X^*_{\text{max}}$ as proved in \cite[Lemma 4.2, Theorem 4.3]{KY}, see also \cite[Proposition 2.14]{Okayama}. Therefore, we also conclude that, in particular, the shrinking singularity $p=(0,0)=X_{\max}((-1,1))$ corresponds to the null curve $C=X_{\max}^{\ast}((-1,1))$ as the image of the set of fold singular points $(-1,1)$. 

The following symmetry assertions summarize the above discussions, see also Figure \ref{zu19}.
\begin{theorem} \label{thm:reflectionSymmetries}
		Let $\Omega$ be a bounded simply connected Jordan domain whose boundary contains adjacent segments $I_1$ and $I_2$ having a common endpoint $z_0$ with interior angle $\alpha$. Further, we let $\phi$ be a solution of the minimal surface equation over $\Omega$, and $\psi$ the dual of $\phi$. Assume that $\phi$ tends to plus or minus infinity on $I_1$ and $I_2$, so that the signs are different if $\alpha=\pi$. Then,
		\begin{itemize} \setlength{\leftskip}{-3ex}
			\item $X_{\min}$ admits a vertical segment $L$ over $z_0$, and has
						the line symmetry there.
		\end{itemize}
		Further, the following statements hold at the corresponding part to $L$ under the conjugation and the dual operation.
		\begin{itemize} \setlength{\leftskip}{-3ex}
			\item $X_{\max}$ admits a shrinking singularity, and has the point symmetry there.
			\item $X_{\min}^{\ast}$ admits a horizontal geodesic curvature line,
						and has the planar symmetry there.
			\item $X_{\max}^{\ast}$ admits a null curve as a folding singularity, and has a folded symmetry there.
		\end{itemize}
		Here, $X_{\min}$ and $X_{\max}$ denote the extended surfaces of ${\rm graph}(\phi)$ and ${\rm graph}(\psi)$ across $z_0$, respectively, and $X_{\min}^{\ast}$ and $X_{\max}^{\ast}$ denote their conjugations.
\end{theorem}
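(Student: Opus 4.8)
The plan is to obtain Theorem~\ref{thm:reflectionSymmetries} by assembling three ingredients, all essentially prepared in the preceding discussion: the reflection principle of Theorem~\ref{thm:reflection}, which gives the first bullet and the $X_{\max}$ bullet outright; the explicit representations $X_{\min}=(f,t)$, $X_{\max}=(f,t^{\ast})$, $X_{\min}^{\ast}=(f^{\ast},t^{\ast})$, $X_{\max}^{\ast}=(f^{\ast},-t)$ together with the commutative diagram relating them under the conjugation and the dual operation; and the classical correspondences between distinguished curves on a generalized surface and on its conjugate. Thus the proof amounts to transporting the involution $w\mapsto\overline{w}$ through the diagram and then identifying the four fixed-point loci.

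First I would invoke Theorem~\ref{thm:reflection}. After normalizing $z_0=0$ and $\psi(z_0)=0$, part~(a) gives the extended generalized minimal surface $X_{\min}\colon\mathbb{D}\to\mathbb{E}^3$ with the vertical segment $L=X_{\min}(\mathbb{D}\cap\mathbb{R})$ over $z_0$ and the line symmetry $X_{\min}(\overline{w})=\sigma\circ X_{\min}(w)$, where $\sigma$ is the $\pi$-rotation about $L$; part~(b) gives the extended generalized maximal surface $X_{\max}$ with the shrinking singularity $p=(0,0,0)=X_{\max}(\mathbb{D}\cap\mathbb{R})$ and the point symmetry $X_{\max}(\overline{w})=\tau\circ X_{\max}(w)$, where $\tau$ is the point symmetry about $p$. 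This is exactly the first bullet and the $X_{\max}$ bullet.

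Next I would transport these symmetries through the conjugation. Writing $X_{\min}=(f,t)$ with the harmonic, no longer univalent, map $f$ of Remark~\ref{remark:extended}, $t=\Re(F)$, $t^{\ast}=\Im(F)$ for the associated holomorphic $F$ (which, by construction, satisfies $F(\overline{w})=\overline{F(w)}$), one reads off $f\equiv 0$ and $t^{\ast}\equiv 0$ on $\mathbb{D}\cap\mathbb{R}$, together with $t(\overline{w})=t(w)$ and $t^{\ast}(\overline{w})=-t^{\ast}(w)$. Applying the Schwarz reflection argument componentwise to the odd harmonic map $f$ gives, after normalizing the additive constant, $f^{\ast}(\overline{w})=f^{\ast}(w)$. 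Substituting into $X_{\min}^{\ast}=(f^{\ast},t^{\ast})$ and $X_{\max}^{\ast}=(f^{\ast},-t)$ yields
\[
	X_{\min}^{\ast}(\overline{w})=\bigl(f^{\ast}(w),-t^{\ast}(w)\bigr)=\rho\circ X_{\min}^{\ast}(w),\qquad
	X_{\max}^{\ast}(\overline{w})=\bigl(f^{\ast}(w),-t(w)\bigr)=X_{\max}^{\ast}(w),
\]
where $\rho$ is the reflection in the $xy$-plane. Since $t^{\ast}\equiv 0$ on $\mathbb{D}\cap\mathbb{R}$, the curve $\Gamma=X_{\min}^{\ast}(\mathbb{D}\cap\mathbb{R})$ lies in the $xy$-plane, and it is a genuine curve because $f^{\ast}$ is non-constant on $\mathbb{D}\cap\mathbb{R}$ (otherwise $f$ would be constant).

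Finally I would name the fixed-point curves via the classical curve correspondences. The vertical segment $L$ is a straight-line segment on the minimal surface $X_{\min}$, so its image on the conjugate minimal surface is a planar geodesic, hence automatically a curvature line (\cite[Section~3.4]{DHS}); lying in a horizontal plane it is the asserted horizontal geodesic curvature line, and $\rho$ is its planar symmetry. Dually, the shrinking singular set $\mathbb{D}\cap\mathbb{R}$ of $X_{\max}$ corresponds under conjugation to a set of folding singular points of $X_{\max}^{\ast}$ by \cite[Lemma~4.2, Theorem~4.3]{KY} (see also \cite[Proposition~2.14]{Okayama}), whose image $C=X_{\max}^{\ast}(\mathbb{D}\cap\mathbb{R})$ is by definition a null curve, and the invariance $X_{\max}^{\ast}(\overline{w})=X_{\max}^{\ast}(w)$ is precisely the folded symmetry along $C$ (see Figure~\ref{zu19}).

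The step I expect to require the most care is the bookkeeping of additive constants in the third paragraph --- choosing the primitives defining $F$ and $f^{\ast}$ so that the three reflection identities hold exactly, not merely up to constants --- together with the careful use of the curve-correspondence results in the generalized-surface setting, where $X_{\min}$ may carry branch points on $L$ (Remark~\ref{remark:branchpoint}) and $X_{\max}^{\ast}$ may be non-immersed along $C$; this forces ``geodesic curvature line'' and ``folding singularity'' to be read in the appropriate generalized sense, and requires checking that none of the four fixed-point loci degenerates to a point.
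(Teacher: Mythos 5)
Your proposal is correct and follows essentially the same route as the paper: invoke Theorem~\ref{thm:reflection} for the $X_{\min}$ and $X_{\max}$ symmetries, use $f\equiv 0$ and $t^{\ast}\equiv 0$ on $\mathbb{D}\cap\mathbb{R}$ plus the reflection principle to get $f^{\ast}(\overline{w})=f^{\ast}(w)$ and hence the planar and folded symmetries of $X_{\min}^{\ast}$ and $X_{\max}^{\ast}$, and then cite \cite[Section~3.4]{DHS} and \cite[Lemma~4.2, Theorem~4.3]{KY} (cf.~\cite[Proposition~2.14]{Okayama}) to identify the fixed-point loci as a horizontal planar geodesic curvature line and a null folding curve. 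No gaps; your cautionary remarks about additive constants and the generalized (possibly branched) setting match the paper's own treatment.
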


\begin{remark}
The curve $\gamma=f^{\ast}((-1,1))$ in the $xy$-plane does not have a point of convexity in the sense used in \cite[Definition 2.6]{BH}. This can be shown in a function theoretical way, by using the fact that $|\widetilde{\omega}|\equiv 1$ where $\widetilde{\omega}$ is the analytic dilatation of $f^{\ast}$. (The concavity of the image of a curve on which $|\omega|\equiv 1$ gets interest, see \cite{BH}, \cite[Section 7.3]{D}, for example.) On the other hand, we can also prove this by a simple causality argument on the maximal surface $X_{\max}^{\ast}$. Indeed, if $\gamma$ is not concave, then we can find two points $z_1, z_2\in \gamma$ such that the straight line joining $z_1$ and $z_2$ lies in the image domain $\Omega^{\ast}=f^{\ast}(\mathbb{D})$. Thus the same argument as in the proof of Proposition \ref{prop:signAlter} leads to a contradiction, since $C=X_{\max}^{\ast}((-1,1))$ is a null curve. 
\end{remark}

\begin{remark}	
Since $X_{\max}^{\ast}$ has folding singularities, the image of $X_{\max}^{\ast}$ can be extended analytically to a timelike minimal surface across the null curve $X_{\max}^{\ast}((-1,1))$, see \cite{Okayama}, \cite{KKSY} and their references.
\end{remark}


\subsection{Examples}\label{subsec:examples} 
For $n\geq2,\ \alpha=e^{i\pi/n}$ and $r>0$, let $\Omega_n(r)$ be the polygonal domain with vertices $r, \alpha,\ldots, r\alpha^{2k},\alpha^{2k+1},\ldots,r\alpha^{2n-2}, \alpha^{2n-1}$, so that they lie in the boundary in positive orientation in this order. The following examples were given by McDougall-Schaubroeck in \cite{MS}, in order to describe the Jenkins-Serrin  minimal graphs over $\Omega_n(r)$ by using univalent harmonic mappings written as the Poisson integrals of some step functions. Following their construction methods, we can obtain some examples of maximal surfaces simultaneously, as follows.

The first example is the minimal graph diverging to plus or minus infinity alternately on adjacent edges of $\Omega_n(r)$. Let $p\in \mathbb{R}$ be the unique solution of the equation
\begin{equation}
	r\sin\left(\frac{n-1}{n}p\pi\right)=\sin\left(\frac{n-1}{n}(1-p)\pi\right),\ \ \ 0<p<1. \label{eq:p}
\end{equation}
Then, we can see that the Poisson integral $f$ of the step function $\hat{f}$, defined by $\hat{f}(e^{it})=r\alpha^{2k}$ for $(2k-p)\pi/n<t<(2k+p)\pi/n$ and $\hat{f}(e^{it})=\alpha^{2k+1}$ for $(2k+p)\pi/n<t<(2k+2-p)\pi/n$, is univalent and
\begin{equation*}
	f(w)=\frac{r}{\pi}\sum_{k=0}^{n-1} \alpha^{2k}\arg\frac{w-\alpha^{2k}\beta}{w-\alpha^{2k}\overline{\beta}}+\frac{1}{\pi}\sum_{k=0}^{n-1}\alpha^{2k+1}\arg\frac{w-\alpha^{2k+2}\overline{\beta}}{w-\alpha^{2k}\beta},\label{eq:examf}
\end{equation*}
where $\beta=e^{ip\pi/n}$. Further, the analytic dilatation $\omega$ of $f$ is $\omega(w)=w^{2(n-1)}$. Therefore, since $\omega$ has a holomorphic square root $\sqrt{\omega(w)}=w^{n-1}$, the holomorphic  function $F(w)=F_f(w)$ in \eqref{eq:F} can be defined and it holds that
\[
	F(w)=F_f(w)=\frac{1}{\pi}\sin\left(\frac{\pi}{n}\right)\csc\left(\frac{n-1}{n}p\pi\right)\log\frac{w^n-e^{ip\pi}}{w^n-e^{-ip\pi}},
\]
with an appropriate additive constant. Then, we have the dual maximal graph $X_{\max}=(f,{\rm Im}(F))$, together with the desired Jenkins-Serrin minimal graph $X_{\min}=(f,{\rm Re}(F))$, see Figure \ref{zu20}.

\begin{figure}[htbp]
    \begin{tabular}{cc}
      \begin{minipage}[t]{0.5\hsize}
        \centering
        \includegraphics[keepaspectratio, scale=0.51]{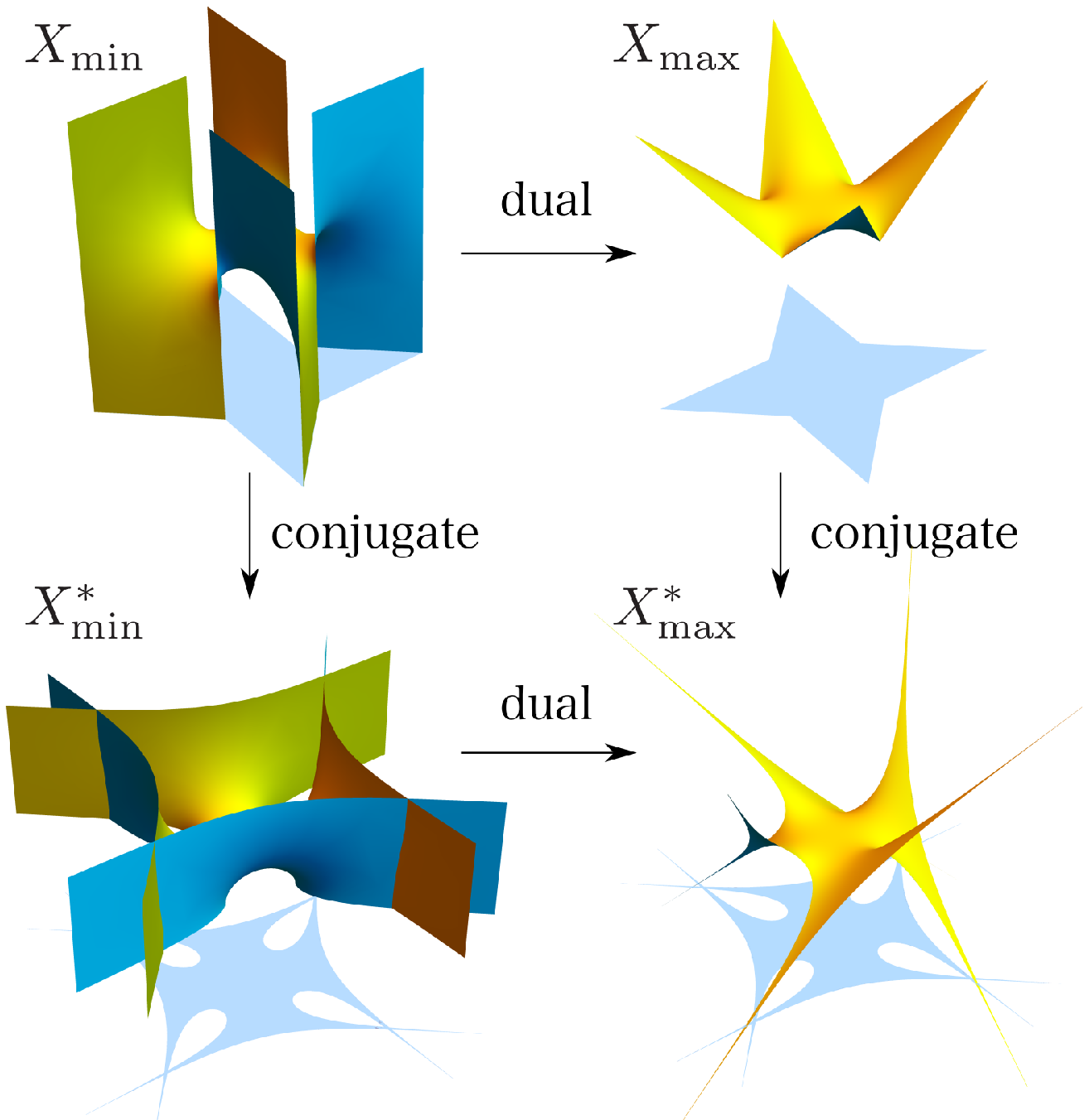}
        \caption{The first example over $\Omega_4(0.4)$, and their conjugations.}
        \label{zu20}
      \end{minipage} &
      \begin{minipage}[t]{0.5\hsize}
        \centering
        \includegraphics[keepaspectratio, scale=0.51]{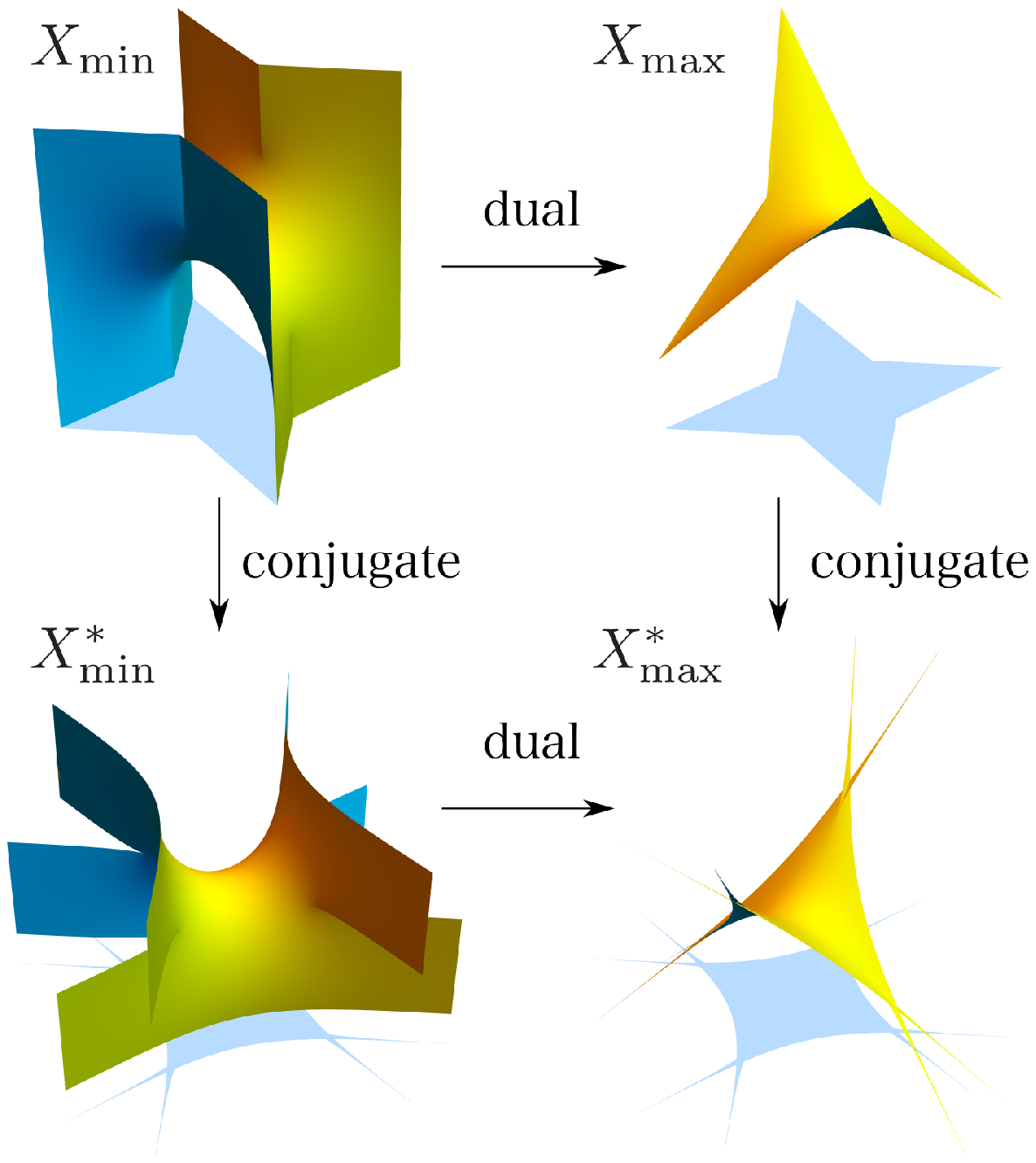}
        \caption{The second example over $\Omega_4(0.4)$, and their conjugations.}
        \label{zu21}
      \end{minipage}
    \end{tabular}
  \end{figure}

The second example is the minimal graph diverging to plus or minus infinity on each edge, so that the signs change at each convex corner and do not change at each non-convex corner. To construct it, we further suppose that $n\geq 2$ is an even integer and $r<\cos(\pi/n)$. Then the desired minimal graph is given by the same way as in the first example, by using the unique solution $q$ of the equation
\begin{equation}
	r\cos \left(\frac{n-2}{2n}q\pi\right)=\sin\left(\frac{n-2}{2n}(1-q)\pi\right) ,\ \ \ 0<q<1, \label{eq:q}
\end{equation}
instead of $p$. In this case, the harmonic mapping $f$ has an analytic dilatation $\omega(w)=-w^{n-2}$, and thus we have
\[
	F(w)=F_f(w)=\frac{1}{\pi}\sin\left(\frac{\pi}{n}\right)\sec\left(\frac{n-2}{2n}q\pi\right)\log\left(\frac{w^{n/2}-e^{iq\pi/2}}{w^{n/2}+e^{iq\pi/2}}\cdot \frac{w^{n/2}-e^{-iq\pi/2}}{w^{n/2}+e^{-iq\pi/2}}\right).
\]
Similarly, we obtain an explicit representation of the dual maximal graph $X_{\max}=(f,{\rm Im}(F))$ of $X_{\min}=(f,{\rm Re}(F))$, see Figure \ref{zu21}.

Other examples of the Jenkins-Serrin minimal graphs over $\Omega_n (r)$ are also constructed in \cite[Section 5]{MS}. See their article also for detailed discussions on the sign changing properties of minimal graphs and on the meanings of the equations \eqref{eq:p} and \eqref{eq:q}.


\subsection{Reflection property along lightlike boundary lines of maximal surfaces}\label{subsec:lightlike_reflection}
As well as the case of minimal surfaces, the reflection principle holds for maximal surfaces, that is, if a  maximal surface contains a spacelike straight line segment $l$, then the surface is invariant under the line symmetry with respect to $l$. 

However, as far as the authors know, it is not known whether such a reflection property is valid for lightlike lines on boundaries of maximal surfaces. (The same question was also raised for timelike minimal surfaces in \cite[p.~1095]{KKSY}.)
Although, many known examples of maximal surfaces have planar symmetries along lightlike lines (cf.~\cite{CR}, \cite{K}, for example), we cannot expect such a symmetry in general. In fact, the following family of maximal surfaces
\[
\mathcal{S}_p=\{ p^2\cos{(qx)}+q^2\cos{(py)}=\cos{(pqt)}\},\quad p^2+q^2=1,\ p,q>0
\]
have lightlike lines, along which the surface have neither the line symmetry nor planar symmetry except the case $p=1/\sqrt{2}$. See Figure \ref{fig:3}. 

\begin{figure}[htb]
 \begin{center}
   \begin{tabular}{c@{\hspace{0.3cm}}c@{\hspace{-0.4cm}}c}
        \includegraphics[height=6.0cm]{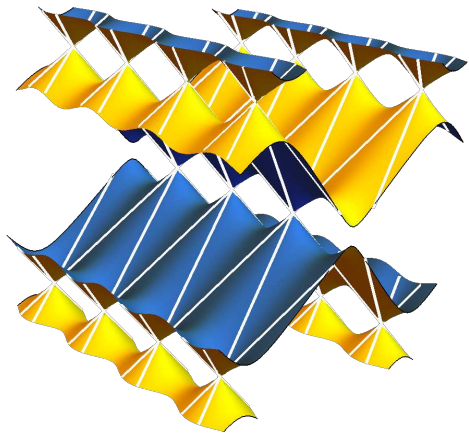} 
    \end{tabular}
\caption{The surface $\mathcal{S}_{1/3}$ containing lightlike lines along which the surface have neither the line symmetry nor planar symmetry.} 
\label{fig:3}
 \end{center}
\end{figure}

In contrast, lightlike lines induce a point symmetry, not a line or a planar symmetry in many situations as follows.

\begin{corollary}\label{cor:LineSymmetry}
If a maximal graph tamely degenerates to adjacent two lightlike line segments, then the intersection is a shrinking singularity of the extended generalized maximal surface in the sense of Theorem $\ref{thm:reflection}$, and the extended surface has the point symmetry with respect to this shrinking singularity. 
\end{corollary}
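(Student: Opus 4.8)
The plan is to reduce this statement to Theorem \ref{thm:reflection}(b) via the duality of Fact \ref{fact:duality} and Theorem \ref{thm:1}. Suppose $\psi$ is a maximal graph over a domain whose boundary contains adjacent segments $I_1$ and $I_2$ meeting at a common endpoint $z_0$ with interior angle $\alpha$, and that $\psi$ tamely degenerates to lightlike line segments on each of $I_1$ and $I_2$. First I would pass to a small disk $D$ centered at $z_0$ with $D\cap(\partial\Omega\setminus(I_1\cup\{z_0\}\cup I_2))=\emptyset$ and work on $\Omega\cap D$, which is again a bounded simply connected Jordan domain whose boundary contains the two adjacent segments; this is the reduction alluded to in the remark following Theorem \ref{thm:1} about only needing simple connectedness. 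By Fact \ref{fact:duality} there is a solution $\phi$ of the minimal surface equation on $\Omega\cap D$ whose dual is $\psi$ (after possibly replacing $\psi$ by $-\psi$ so that the relation \eqref{eq:duality} holds in the oriented sense; note the corollary speaks of ``lightlike'' without specifying causal character, so the future/past distinction is immaterial to the conclusion).

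Next I would apply Theorem \ref{thm:1} in the direction (ii)$\Rightarrow$(i) on each segment separately: since $\psi$ (or $-\psi$) tamely degenerates to a future- (resp.\ past-)directed lightlike line segment on $I_j$, the dual $\phi$ tends to $+\infty$ or $-\infty$ on $I_j$, for $j=1,2$. The one point requiring care is the hypothesis in Theorem \ref{thm:reflection} that the signs of $\phi$ on $I_1$ and $I_2$ differ when $\alpha=\pi$. But this is automatic from the degeneration data: if $\alpha=\pi$ then $I_1\cup\{z_0\}\cup I_2$ is a single segment, and the two lightlike segments to which $\psi$ degenerates have a common projection direction; were the signs of $\phi$ the same, Proposition \ref{prop:removablity} would make $\phi\to+\infty$ (or $-\infty$) across the whole segment and the two lightlike boundary pieces would have the same causal direction and lie on one lightlike line, contradicting that they are genuinely two adjacent degenerations meeting at $z_0$ — alternatively, one invokes directly the Lipschitz/causality argument in the proof of Lemma \ref{prop:discontiPoints} which already records that two adjacent lightlike degenerations at an angle $\pi$ forbid equal signs. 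Either way, the hypotheses of Theorem \ref{thm:reflection} are met.

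Having verified the hypotheses, Theorem \ref{thm:reflection}(b) directly gives the conclusion: the graph of $\psi$ extends to a generalized maximal surface $X_{\max}\colon\mathbb{D}\to\mathbb{L}^3$ with $X_{\max}(\mathbb{D}\cap\mathbb{H})=\mathrm{graph}(\psi)$, possessing a shrinking singularity $p_0=(z_0,\psi(z_0))=X_{\max}(\mathbb{D}\cap\mathbb{R})$ and the symmetry $X_{\max}(\overline{w})=\tau\circ X_{\max}(w)$ with $\tau$ the point reflection in $p_0$; this is precisely the asserted point symmetry with respect to the shrinking singularity. I expect the only genuine obstacle to be the bookkeeping around the $\alpha=\pi$ sign condition and the orientation normalization of the dual; everything else is a direct citation of the machinery already established in Sections \ref{sec:3} and \ref{subsec:reflection}. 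One should also remark, for completeness, that the conclusion is stated for the \emph{extended} surface and that the shrinking singularity is intrinsic — independent of the auxiliary disk $D$ — since the germ of $\psi$ near $z_0$ determines it.
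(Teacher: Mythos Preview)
Your proposal is correct and is exactly the argument the paper intends: the corollary is stated without proof because it is an immediate consequence of Theorem~\ref{thm:1} (direction (ii)$\Rightarrow$(i)) together with Theorem~\ref{thm:reflection}(b), and your steps---localize to a half-disk, pass to the dual $\phi$ via Fact~\ref{fact:duality}, read off $\phi\to\pm\infty$ on $I_1,I_2$, then invoke Theorem~\ref{thm:reflection}(b)---carry this out faithfully. One small comment: your alternative appeal to Lemma~\ref{prop:discontiPoints} for the $\alpha=\pi$ sign condition is slightly misstated (that lemma does not forbid equal signs, it characterizes $w_1=w_2$), but your primary argument via Proposition~\ref{prop:removablity}---that equal signs at $\alpha=\pi$ would collapse the two lightlike pieces into one, contradicting the hypothesis of two segments with a genuine intersection---is the right one.
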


Indeed, the above situation in Corollary \ref{cor:LineSymmetry} occurs in the surface $S_p$ and all of the maximal surfaces in \cite{CR}, in addition to the duals of the Jenkins-Serrin minimal graphs.


\begin{acknowledgement}
The authors would like to express their gratitude to Professor Daoud Bshouty for giving us important and helpful information on related works. 
They also would like to thank Professors Masaaki Umehara and Kotaro Yamada, who taught them the family $\{\mathcal{S}_p\}$ of maximal surfaces in Section \ref{subsec:lightlike_reflection}.

\end{acknowledgement}


\begin{bibdiv}
\begin{biblist}

\bib{AUY}{unpublished}{
      author={Akamine, S.},
      author={Umehara, M.},
      author={Yamada, K.},
       title={Improvement of the bernstein-type theorem for space-like zero
  mean curvature graphs in lorentz-minkowski space using fluid mechanical
  duality},
        date={2019},
        note={arXiv:1904.08046},
}

\bib{AL}{article}{
      author={Ara\'{u}jo, H.},
      author={Leite, M.~L.},
       title={How many maximal surfaces do correspond to one minimal surface?},
        date={2009},
        ISSN={0305-0041},
     journal={Math. Proc. Cambridge Philos. Soc.},
      volume={146},
      number={1},
       pages={165\ndash 175},
         url={https://doi.org/10.1017/S0305004108001722},
      review={\MR{2461875}},
}

\bib{BS}{article}{
      author={Bartnik, R.},
      author={Simon, L.},
       title={Spacelike hypersurfaces with prescribed boundary values and mean
  curvature},
        date={1982/83},
        ISSN={0010-3616},
     journal={Comm. Math. Phys.},
      volume={87},
      number={1},
       pages={131\ndash 152},
         url={http://projecteuclid.org/euclid.cmp/1103921909},
      review={\MR{680653}},
}

\bib{B}{book}{
      author={Bers, L.},
       title={Mathematical aspects of subsonic and transonic gas dynamics},
      series={Surveys in Applied Mathematics, Vol. 3},
   publisher={John Wiley \& Sons, Inc., New York; Chapman \& Hall, Ltd.,
  London},
        date={1958},
      review={\MR{0096477}},
}

\bib{BH}{article}{
      author={Bshouty, D.},
      author={Hengartner, W.},
       title={Boundary values versus dilatations of harmonic mappings},
        date={1997},
        ISSN={0021-7670},
     journal={J. Anal. Math.},
      volume={72},
       pages={141\ndash 164},
         url={https://doi.org/10.1007/BF02843157},
      review={\MR{1482993}},
}

\bib{BW}{article}{
      author={Bshouty, D.},
      author={Weitsman, A.},
       title={On the {G}auss map of minimal graphs},
        date={2003},
        ISSN={0278-1077},
     journal={Complex Var. Theory Appl.},
      volume={48},
      number={4},
       pages={339\ndash 346},
         url={https://doi.org/10.1080/0278107031000064498},
      review={\MR{1972069}},
}

\bib{C}{inproceedings}{
      author={Calabi, E.},
       title={Examples of {B}ernstein problems for some nonlinear equations},
        date={1970},
   booktitle={Global {A}nalysis ({P}roc. {S}ympos. {P}ure {M}ath., {V}ol. {XV},
  {B}erkeley, {C}alif., 1968)},
   publisher={Amer. Math. Soc., Providence, R.I.},
       pages={223\ndash 230},
      review={\MR{0264210}},
}

\bib{DHS}{book}{
      author={Dierkes, U.},
      author={Hildebrandt, S.},
      author={Sauvigny, F.},
       title={Minimal surfaces},
     edition={second},
      series={Grundlehren der Mathematischen Wissenschaften [Fundamental
  Principles of Mathematical Sciences]},
   publisher={Springer, Heidelberg},
        date={2010},
      volume={339},
        ISBN={978-3-642-11697-1},
         url={https://doi.org/10.1007/978-3-642-11698-8},
        note={With assistance and contributions by A. K\"{u}ster and R. Jakob},
      review={\MR{2566897}},
}

\bib{D}{book}{
      author={Duren, P.},
       title={Harmonic mappings in the plane},
      series={Cambridge Tracts in Mathematics},
   publisher={Cambridge University Press, Cambridge},
        date={2004},
      volume={156},
        ISBN={0-521-64121-7},
         url={https://doi.org/10.1017/CBO9780511546600},
      review={\MR{2048384}},
}

\bib{EL}{article}{
      author={Estudillo, F. J.~M.},
      author={Romero, A.},
       title={Generalized maximal surfaces in {L}orentz-{M}inkowski space
  {$L^3$}},
        date={1992},
        ISSN={0305-0041},
     journal={Math. Proc. Cambridge Philos. Soc.},
      volume={111},
      number={3},
       pages={515\ndash 524},
         url={https://doi.org/10.1017/S0305004100075587},
      review={\MR{1151327}},
}

\bib{CR}{article}{
      author={Fujimori, S.},
      author={Kim, Y.~W.},
      author={Koh, S.-E.},
      author={Rossman, W.},
      author={Shin, H.},
      author={Takahashi, H.},
      author={Umehara, M.},
      author={Yamada, K.},
      author={Yang, S.-D.},
       title={Zero mean curvature surfaces in {$\bold{L}^3$} containing a
  light-like line},
        date={2012},
        ISSN={1631-073X},
     journal={C. R. Math. Acad. Sci. Paris},
      volume={350},
      number={21-22},
       pages={975\ndash 978},
         url={https://doi.org/10.1016/j.crma.2012.10.024},
      review={\MR{2996778}},
}

\bib{Okayama}{article}{
      author={Fujimori, S.},
      author={Kim, Y.~W.},
      author={Koh, S.-E.},
      author={Rossman, W.},
      author={Shin, H.},
      author={Umehara, M.},
      author={Yamada, K.},
      author={Yang, S.-D.},
       title={Zero mean curvature surfaces in {L}orentz-{M}inkowski 3-space and
  2-dimensional fluid mechanics},
        date={2015},
        ISSN={0030-1566},
     journal={Math. J. Okayama Univ.},
      volume={57},
       pages={173\ndash 200},
      review={\MR{3289302}},
}

\bib{HS}{article}{
      author={Hengartner, W.},
      author={Schober, G.},
       title={Harmonic mappings with given dilatation},
        date={1986},
        ISSN={0024-6107},
     journal={J. London Math. Soc. (2)},
      volume={33},
      number={3},
       pages={473\ndash 483},
         url={https://doi.org/10.1112/jlms/s2-33.3.473},
      review={\MR{850963}},
}

\bib{HS2}{article}{
      author={Hengartner, W.},
      author={Schober, G.},
       title={On the boundary behavior of orientation-preserving harmonic
  mappings},
        date={1986},
        ISSN={0278-1077},
     journal={Complex Variables Theory Appl.},
      volume={5},
      number={2-4},
       pages={197\ndash 208},
         url={https://doi.org/10.1080/17476938608814140},
      review={\MR{846488}},
}

\bib{JS}{article}{
      author={Jenkins, H.},
      author={Serrin, J.},
       title={Variational problems of minimal surface type. {II}. {B}oundary
  value problems for the minimal surface equation},
        date={1966},
        ISSN={0003-9527},
     journal={Arch. Rational Mech. Anal.},
      volume={21},
       pages={321\ndash 342},
         url={https://doi.org/10.1007/BF00282252},
      review={\MR{0190811}},
}

\bib{Karcher}{article}{
      author={Karcher, H.},
       title={Embedded minimal surfaces derived from {S}cherk's examples},
        date={1988},
        ISSN={0025-2611},
     journal={Manuscripta Math.},
      volume={62},
      number={1},
       pages={83\ndash 114},
         url={https://doi.org/10.1007/BF01258269},
      review={\MR{958255}},
}

\bib{Kar2}{article}{
      author={Karcher, H.},
       title={The triply periodic minimal surfaces of {A}lan {S}choen and their
  constant mean curvature companions},
        date={1989},
        ISSN={0025-2611},
     journal={Manuscripta Math.},
      volume={64},
      number={3},
       pages={291\ndash 357},
         url={https://doi.org/10.1007/BF01165824},
      review={\MR{1003093}},
}

\bib{Kar3}{incollection}{
      author={Karcher, H.},
       title={Construction of minimal surfaces},
        date={Surveys in Geometry, University of Tokyo, 1989, and Lecture
  Notes No. 12, SFB 256, Bonn, 1989},
       pages={1\ndash 96},
}

\bib{Katz}{book}{
      author={Katznelson, Y.},
       title={An introduction to harmonic analysis},
     edition={Third},
      series={Cambridge Mathematical Library},
   publisher={Cambridge University Press, Cambridge},
        date={2004},
        ISBN={0-521-83829-0; 0-521-54359-2},
         url={https://doi.org/10.1017/CBO9781139165372},
      review={\MR{2039503}},
}

\bib{KKSY}{article}{
      author={Kim, Y.~W.},
      author={Koh, S.-E.},
      author={Shin, H.},
      author={Yang, S.-D.},
       title={Spacelike maximal surfaces, timelike minimal surfaces, and
  {B}j\"{o}rling representation formulae},
        date={2011},
        ISSN={0304-9914},
     journal={J. Korean Math. Soc.},
      volume={48},
      number={5},
       pages={1083\ndash 1100},
         url={https://doi.org/10.4134/JKMS.2011.48.5.1083},
      review={\MR{2850077}},
}

\bib{KY}{article}{
      author={Kim, Y.~W.},
      author={Yang, S.-D.},
       title={Prescribing singularities of maximal surfaces via a singular
  {B}j\"{o}rling representation formula},
        date={2007},
        ISSN={0393-0440},
     journal={J. Geom. Phys.},
      volume={57},
      number={11},
       pages={2167\ndash 2177},
         url={https://doi.org/10.1016/j.geomphys.2007.04.006},
      review={\MR{2360235}},
}

\bib{K}{article}{
      author={Kobayashi, O.},
       title={Maximal surfaces in the {$3$}-dimensional {M}inkowski space
  {$L^{3}$}},
        date={1983},
        ISSN={0387-3870},
     journal={Tokyo J. Math.},
      volume={6},
      number={2},
       pages={297\ndash 309},
         url={https://doi.org/10.3836/tjm/1270213872},
      review={\MR{732085}},
}

\bib{L}{article}{
      author={Lee, H.},
       title={Extensions of the duality between minimal surfaces and maximal
  surfaces},
        date={2011},
        ISSN={0046-5755},
     journal={Geom. Dedicata},
      volume={151},
       pages={373\ndash 386},
         url={https://doi.org/10.1007/s10711-010-9539-y},
      review={\MR{2780757}},
}

\bib{LM}{article}{
      author={Lee, H.},
      author={Manzano, J.~M.},
       title={Generalized {C}alabi correspondence and complete spacelike
  surfaces},
        date={2019},
        ISSN={1093-6106},
     journal={Asian J. Math.},
      volume={23},
      number={1},
       pages={35\ndash 48},
         url={https://doi.org/10.4310/AJM.2019.v23.n1.a3},
      review={\MR{3949590}},
}

\bib{LLS}{article}{
      author={L\'{o}pez, F.~J.},
      author={L\'{o}pez, R.},
      author={Souam, R.},
       title={Maximal surfaces of {R}iemann type in {L}orentz-{M}inkowski space
  {$\Bbb L^3$}},
        date={2000},
        ISSN={0026-2285},
     journal={Michigan Math. J.},
      volume={47},
      number={3},
       pages={469\ndash 497},
         url={https://doi.org/10.1307/mmj/1030132590},
      review={\MR{1813540}},
}

\bib{MS}{article}{
      author={McDougall, J.},
      author={Schaubroeck, L.},
       title={Minimal surfaces over stars},
        date={2008},
        ISSN={0022-247X},
     journal={J. Math. Anal. Appl.},
      volume={340},
      number={1},
       pages={721\ndash 738},
         url={https://doi.org/10.1016/j.jmaa.2007.07.085},
      review={\MR{2376192}},
}

\bib{M}{book}{
      author={Milnor, J.},
       title={Dynamics in one complex variable},
     edition={Third},
      series={Annals of Mathematics Studies},
   publisher={Princeton University Press, Princeton, NJ},
        date={2006},
      volume={160},
        ISBN={978-0-691-12488-9; 0-691-12488-4},
      review={\MR{2193309}},
}

\bib{O}{book}{
      author={Osserman, R.},
       title={A survey of minimal surfaces},
     edition={Second},
   publisher={Dover Publications, Inc., New York},
        date={1986},
        ISBN={0-486-64998-9},
      review={\MR{852409}},
}

\bib{S}{article}{
      author={Shiga, H.},
       title={On boundary behavior of {C}auchy integrals},
        date={2013},
        ISSN={0365-1029},
     journal={Ann. Univ. Mariae Curie-Sk\l odowska Sect. A},
      volume={67},
      number={1},
       pages={65\ndash 82},
         url={https://doi.org/10.2478/v10062-012-0023-z},
      review={\MR{3101052}},
}

\bib{Smyth}{article}{
      author={Smyth, B.},
       title={Stationary minimal surfaces with boundary on a simplex},
        date={1984},
        ISSN={0020-9910},
     journal={Invent. Math.},
      volume={76},
      number={3},
       pages={411\ndash 420},
         url={https://doi.org/10.1007/BF01388467},
      review={\MR{746536}},
}

\bib{UY1}{article}{
      author={Umehara, M.},
      author={Yamada, K.},
       title={Maximal surfaces with singularities in {M}inkowski space},
        date={2006},
        ISSN={0385-4035},
     journal={Hokkaido Math. J.},
      volume={35},
      number={1},
       pages={13\ndash 40},
         url={https://doi.org/10.14492/hokmj/1285766302},
      review={\MR{2225080}},
}

\bib{W}{article}{
      author={Weitsman, A.},
       title={On univalent harmonic mappings and minimal surfaces},
        date={2000},
        ISSN={0030-8730},
     journal={Pacific J. Math.},
      volume={192},
      number={1},
       pages={191\ndash 200},
         url={https://doi.org/10.2140/pjm.2000.192.191},
      review={\MR{1741021}},
}

\end{biblist}
\end{bibdiv}

\end{document}